\theoremstyle{remark}
\newtheorem{proposition}{Proposition}
\newtheorem{remark}{Remark}
\newtheorem{assumption}{Assumption}
\newcommand{\R}{\mathbb{R}}
\newcommand{\N}{\mathbb{N}}
\renewcommand{\bar}{\overline}
\newcommand{\calI}{\mathcal{I}}
\newcommand{\calJ}{\mathcal{J}}
\newcommand{\calD}{\mathcal{D}}
\newcommand{\calR}{\mathcal{R}}
\newcommand{\calS}{\mathcal{S}}
\newcommand{\calT}{\mathcal{T}}
\newcommand{\xdag}{x^\dagger}
\newcommand{\Phidag}{\Phi^\dagger}
\newcommand{\ydel}{y^\delta}
\newcommand{\xad}{x_\alpha^\delta}
\newcommand{\Phiad}{\Phi_\alpha^\delta}
\newcommand{\ul}[1]{\underline{#1}}
\newcommand{\ol}[1]{\overline{#1}}
\newcommand{\setof}[2]{\left\{#1:#2\right\}}
\newcommand{\argmin}{\mbox{argmin}}
\newcommand{\Mad}{M_{ad}}
\newcommand{\Tikh}{T}
\newcommand{\spc}{ \mbox{~~~} }
\begin{document}

\title{Regularization of inverse problems \\ via box constrained minimization
\thanks{This work was partially supported by the Austrian Science Fund FWF
 under grants I2271 and P30054.}
}
\author{Philipp Hungerl\"ander}
\author{Barbara Kaltenbacher}
\author{Franz Rendl}
\affil{Alpen-Adria-Universit\"at Klagenfurt, Austria\\
 \href{mailto:firstname.surname@aau.at}{firstname.surname@aau.at}
}

\maketitle

\begin{abstract}

In the present paper we consider minimization based formulations of
inverse problems
$(x,\Phi)\in\argmin\setof{\mathcal{J}(x,\Phi;y)}{(x,\Phi)\in M_{ad}(y)}$ for
the specific but highly relevant case that the admissible set $M_{ad}^\delta(y^\delta)$ is
defined by pointwise bounds, which is the case, e.g., if $L^\infty$
constraints on the parameter are imposed in the sense of Ivanov
regularization, and the $L^\infty$ noise level in the observations is
prescribed in the sense of Morozov regularization. As application
examples for this setting we consider three coefficient identification
problems in elliptic boundary value problems.

Discretization of $(x,\Phi)$ with piecewise constant and piecewise linear finite elements,
respectively, leads to finite dimensional nonlinear box constrained
minimization problems that can numerically be solved via Gauss-Newton
type SQP methods. In our computational experiments we revisit the
suggested application examples. In order to speed up the computations
and obtain exact numerical solutions we use recently developed active set methods
for solving strictly convex quadratic programs with box
constraints as subroutines within our Gauss-Newton type SQP approach.
\end{abstract}

\begin{keywords}
Inverse problems; minimization based formulation; elliptic value
problems; coefficient identification problems; finite elements;
Gauss-Newton type SQP; active set methods.
\end{keywords}

\maketitle

\section{Introduction}

Recently, as alternatives to the classical reduced formulation of inverse problems as operator equations with a forward operator $F$
\begin{equation}\label{Fxy}
F(x)=y\ ,
\end{equation}
all-at once methods based on the more original formulation as a system of model and observation equation
\begin{align}
&A(x,\Phi)=0\ , \label{Axu}\\
&C(\Phi)=y\ , \label{Cuy}
\end{align}
and beyond that, minimization based formulations
\begin{equation}\label{minJ}
(x,\Phi)\in\argmin\setof{\calJ(x,\Phi;y)}{(x,\Phi)\in \Mad(y)}\ ,
\end{equation}
have been put forward, see, e.g., \cite{minIP,Kindermann}.
In
\eqref{Fxy} -- \eqref{minJ} $x$ is the searched for quantity (e.g., a coefficient in a PDE), $\Phi$ the corresponding state (e.g., the solution of this PDE) and $y$ the observed data. The forward operator $F$ in \eqref{Fxy} is related to the model and observation operators $A:X\times U\to W$, $C:U\to Y$ via the parameter-to-state map $S:X\to U$ determined by the identity
\begin{equation}\label{AxSx0}
A(x,S(x))=0\ , \quad \forall x\in\calD\,,
\end{equation}
which, if well-defined, allows to eliminate the state and define $F=C\circ S:\calD(\subseteq X)\to Y$ in the classical reduced formulation \eqref{Fxy}.

The need for a parameter-to-state map $S$ often leads to restrictions in the domain $\calD$ when working with the classical formulation \eqref{Fxy}. Moreover, numerical evaluation of $F$ in, e.g., iterative methods for solving \eqref{Fxy}, requires solution of the underlying PDE model in each step. Our intention is to avoid these drawbacks by using regularization strategies based on the formulations \eqref{Axu}, \eqref{Cuy} or more generally \eqref{minJ}, thus avoiding the use of $S$.

There are several ways of writing the reduced \eqref{Fxy} and the all-at-once \eqref{Axu}, \eqref{Cuy} formulations as special cases of the minimization based form \eqref{minJ}, e.g. by setting
\[
\calJ(x,\Phi;y)=\calS(F(x),y)\,,\qquad
\Mad(y)=\calD\times \{\Phi_0\}\,,
\]
with some fixed dummy state $\Phi_0\in U$ and some positive definite functional
$\calS:Y\times Y\to\ol{\R}$, i.e., such that
\begin{equation}\label{Sdefinite}
\forall y_1,y_2\in Y\, : \quad \calS(y_1,y_2)\geq0 \quad \mbox{ and }\quad \Bigl(y_1=y_2 \ \Leftrightarrow \ \calS(y_1,y_2)=0\Bigr)\,,
\end{equation}
or
\[
\calJ(x,\Phi;y)=\calS(C(\Phi),y)+\calI_{\{0\}}(A(x,\Phi))\,,
\qquad\Mad(y)=\calD\times U\,,
\]
with the indicator function $\calI_M:W\to\ol{\R}$ defined by
$\calI_M(w)=\begin{cases}0\mbox{ if }w\in M\\ +\infty\mbox{
    else}\end{cases}$\hspace*{-0.4cm} , or
\[
\calJ(x,\Phi;y)=\calS(C(\Phi),y) \,,
\qquad\Mad(y)=\setof{(x,\Phi)\in\calD\times U}{A(x,\Phi)=0}\,,
\]
or
\begin{equation}\label{minJ_Morozov}
\calJ(x,\Phi;y)=\calJ_A(x,\Phi)\,,
\qquad \Mad(y)=\setof{(x,\Phi)\in\calD\times U}{C(\Phi)=y}\,,
\end{equation}
with $\calJ_A:X\times U\to\R$ such that
\begin{equation}\label{Qdefinite}
\forall x,\Phi\in X\times U\, : \quad \calJ_A(x,\Phi)\geq0 \quad\mbox{ and }\quad\Bigl(A(x,\Phi)=0 \ \Leftrightarrow \ \calJ_A(x,\Phi)=0\Bigr)\,,
\end{equation}
e.g., $\calJ_A(x,\Phi)=\frac{1}{2}\|A(x,\Phi)\|_W^2$ or $\calJ_A(x,\Phi)=\calI_{\{0\}}(A(x,\Phi))$.

Additionally, there are relevant minimization based formulations that cannot be cast into the framework of \eqref{Fxy} nor \eqref{Axu}, \eqref{Cuy}, such as the variational formulation of the electrical impedance tomography problem EIT cf., e.g., \cite{Knowles1998,KohnMcKenny90,KohnVogelius87}.

In \cite{minIP} we provide an analysis of regularization methods
\begin{align}\label{minJR}
(\xad,\Phiad)\in\argmin\{\Tikh_\alpha(x,\Phi;\ydel)=\calJ(x,\Phi;\ydel)+\alpha\cdot\calR(x,\Phi)
  : (x,\Phi)\in \Mad^\delta(y^\delta)\}\,,
\end{align}
based on the formulation \eqref{minJ} and investigate its applicability to several concrete choices of the cost function and the admissible set, based on \eqref{Axu}, \eqref{Cuy} as well as on the mentioned variational formulation of EIT. Here $\ydel$ are the actually given noisy data satisfying
\begin{equation}\label{delta}
\calS(y,\ydel)\leq\delta\,,
\end{equation}
and $\calR$, $\alpha$ are regularization functional and parameter, respectively.

The present paper is supposed to provide computational results in the specific but highly relevant case that  $\Mad^\delta(y^\delta)$ is defined by pointwise bounds, which is the case, e.g., if $L^\infty$ constraints on the parameter are imposed in the sense of Ivanov regularization, and the $L^\infty$ noise level in the observations is prescribed in the sense of Morozov regularization, i.e., starting from \eqref{minJ}, \eqref{minJ_Morozov}, a regularizer is defined via the minimization problem
\begin{equation}\label{minJRaaoM}
\left\{
\begin{aligned}
&\min_{(x,\Phi)\in X\times U}\Tikh_\alpha(x,\Phi;\ydel)=\calJ_A(x,\Phi)+\alpha\cdot\calR(x,\Phi)
\\
&\hspace*{0.5cm} \mbox{s.t. }\ul{x}\leq x \leq \ol{x}\ , \mbox{ and }
\ydel-\tau\delta\leq C(\Phi)\leq \ydel+\tau\delta\,,
\end{aligned}
\right.
\end{equation}
where $\tau>1$ is a fixed safety factor for the error level in the observation residual, $\ul{x}$, $\ol{x}$ are given (pointwise) bounds on $x$, and $\calS(y_1,y_2)=\|y_1-y_2\|_{L^\infty}$. Here we think of $x$ and $\ydel$ as functions on some domains $\Omega$, $\tilde{\Omega}$, and of $L^\infty$ as the corresponding Lebesgue spaces.
Accordingly, the inequality constraints in \eqref{minJRaaoM} are to be understood pointwise almost everywhere in $\Omega$ and $\tilde{\Omega}$, respectively.
Moreover, $C$ is supposed to be a linear operator, more precisely, a restriction of the state, e.g. to some subdomain or some part of the boundary of $\Omega$.

Regularization here mainly relies on the upper and lower bounds on $x$ as well as on relaxation of the data misfit constraint from \eqref{minJ_Morozov} to an interval $[-\tau\delta,\tau\delta]$. The term $\alpha\cdot\calR$ may as well be skipped, as we will see in some of the examples in Section \ref{sec:appex}, where we consider just
\[
\min_{(x,\Phi)\in X\times U}\calJ_A(x,\Phi)
\hspace*{0.5cm} \mbox{s.t. }\ul{x}\leq x \leq \ol{x}\ , \mbox{ and }
\ydel-\tau\delta\leq C(\Phi)\leq \ydel+\tau\delta\,.
\]

Therewith, simple discretizations of \eqref{minJRaaoM} with, e.g.,
piecewise linear and/or piecewise constant finite elements, lead to
finite dimensional nonlinear box constrained minimization problems. Thus, in
their numerical solution via Gauss-Newton type SQP methods, we can
take advantage of recently developed methods for the efficient
solution of large strictly convex quadratic programs with box
constraints \cite{HungerlaenderRendl15,HungerlaenderRendl17}.

The remainder of this paper is organized as follows. In the following
section we provide a result on convergence of the regularizer to an
exact solution of \eqref{Axu}, \eqref{Cuy} for the particular setting
\eqref{minJRaaoM}. In Section \ref{sec:appex} we discuss three coefficient
identification problems in elliptic boundary value problems as application
examples of our setting \eqref{minJRaaoM}. In Section \ref{sec:GNSQP}
we show how discretization of  $x$ and $\Phi$ in our application examples
leads to finite
dimensional non-linear box constrained minimization problems that can
be solved by Gauss-Newton type SQP approaches. Section \ref{sec:QPbox}
is concerned with the description of methods for
the efficient solution of large strictly convex quadratic programs
with box constraints that are used as subroutines of Gauss-Newton type
SQP approaches. In Section \ref{sec:num} we conduct several numerical
experiments for the suggested application examples. Section
\ref{sec:con} concludes the paper.

\section{Convergence}
For the sake of self-containedness we provide a result on convergence of the regularizer to an exact solution $(\xdag,\Phidag)$ of \eqref{Axu}, \eqref{Cuy} for the particular setting \eqref{minJRaaoM} of \eqref{minJR}, along with its (very short) proof.
The main assumption is existence of an appropriate topology $\tau_U$ on the state space, which will be constructed appropriately in the application examples below, and which also determines the topology in which the regularized solutions converge.
As parameter and data spaces, in view of the pointwise bounds on $x$ and $C(\Phi)$, we will always use
\[
X=L^\infty(\Omega)\,, \quad Y=L^\infty(\tilde{\Omega})\,,
\]
respectively, and $\calT_X$ will be defined by the weak * topology on $L^\infty(\Omega)$.

\begin{assumption}\label{ass1}\textcolor{white}{.}
\begin{itemize}
\item $A(\xdag,\Phidag)=0$, $C(\Phidag)=y$, $\|y-\ydel\|\leq\delta$, $\tau>1$, $\ul{x}\leq \xdag\le\ol{x}$.
\item The topology $\calT_U$ on $U$ is chosen such that for any $c>0$, the level sets
\[
\begin{aligned}
L_c=\{&(x,\Phi)\in L^\infty(\Omega)\times U \, : \, \calJ_A(x,\Phi)+\calR(x,\Phi)\leq c
\\&\mbox{ and } \ul{x}\leq x \leq \ol{x} \mbox{ and }
\ydel-\tau\delta\leq C(\Phi)\leq \ydel+\tau\delta \}
\end{aligned}\]
are $\calT$ compact. Here $\calT$ is the topology defined by weak-* $L^\infty$ convergence on $X=L^\infty(\Omega)$ and by $\calT_U$ on $U$.
\item $\calR:L^\infty(\Omega)\times U\to[0,\infty]$ is a proper convex $\calT$ lower semicontinuous functional.
\item $\calJ_A: L^\infty(\Omega)\times U\to\R$ is $\calT$ lower semicontinuous.
\item For any sequence $(x_n,\Phi_n)_{n\in\N}\subseteq L^\infty(\Omega)\times U$ and any element $(\bar{x}, \bar{\Phi})\in L^\infty(\Omega)\times U$ the following implication holds:
\begin{equation}\label{xbarubar}
\Bigl((x_n,\Phi_n)\stackrel{\calT}{\rightharpoonup}(\bar{x},\bar{\Phi})\, \wedge \,
\|C(\Phi_n)-y\|_{L^\infty(\tilde{\Omega})}\to0 \, \wedge \,
A(\bar{x}, \bar{\Phi})=0\Bigr)
\ \Rightarrow \ C(\bar{\Phi})=y.
\end{equation}
\end{itemize}
\end{assumption}

\begin{proposition}\label{prop_conv}
Under Assumption \ref{ass1}, for any $\alpha>0$, $\ydel\in Y$, a solution $(x_\alpha^\delta,\Phi_\alpha^\delta)$ to \eqref{minJRaaoM} exists.\\
As $\delta\to0$, $\alpha=\alpha(\delta)\to0$, the family of regularized solutions $(x_{\alpha(\delta)}^\delta,\Phi_{\alpha(\delta)}^\delta)_{\delta>0}$ has a $\calT$ convergent subsequence and the limit of every $\calT$ convergent subsequence solves \eqref{Axu}, \eqref{Cuy}. If the solution $(\xdag,\Phidag)$ to \eqref{Axu}, \eqref{Cuy} is unique, then $(x_\alpha^\delta,\Phi_\alpha^\delta)$ converges to $(\xdag,\Phidag)$ in $\calT$.
\end{proposition}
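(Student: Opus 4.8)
The plan is to run the standard direct-method argument for existence and the usual regularization-convergence scheme, with the one genuinely nonstandard ingredient being the passage to the limit in the observation constraint via the implication \eqref{xbarubar}. Throughout I read ``$\calT$ compact'' as sequential compactness, which is the relevant notion for the weak-$*$ topology on bounded subsets of $L^\infty$ with separable predual. For existence (fixed $\alpha$, $\ydel$) I would first note the feasible set is nonempty, since $(\xdag,\Phidag)$ satisfies all constraints: $\ul{x}\le\xdag\le\ol{x}$ by Assumption \ref{ass1}, and as $C(\Phidag)=y$ with $\norm{y-\ydel}_{L^\infty}\le\delta<\tau\delta$ one has $\ydel-\tau\delta\le C(\Phidag)\le\ydel+\tau\delta$. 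If the infimum of $\Tikh_\alpha$ over the feasible set is $+\infty$ every feasible point is a minimizer; otherwise take a minimizing sequence $(x_n,\Phi_n)$. Since $\calJ_A,\calR\ge0$, a bound on $\Tikh_\alpha=\calJ_A+\alpha\calR$ yields a bound on $\calJ_A+\calR$ (with $\alpha$-dependent constant), so the minimizing sequence eventually lies in a $\calT$-compact level set $L_c$. Extracting a $\calT$-convergent subsequence with limit $(\bar x,\bar\Phi)\in L_c$ gives feasibility for free, as the constraints are built into $L_c$, while $\calT$-lower semicontinuity of $\calJ_A$ and $\calR$ gives $\Tikh_\alpha(\bar x,\bar\Phi)\le\liminf\Tikh_\alpha(x_n,\Phi_n)=\inf$, so $(\bar x,\bar\Phi)$ is a minimizer.

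For convergence, let $\delta_j\to0$, $\alpha_j=\alpha(\delta_j)\to0$, and abbreviate $(x_j,\Phi_j):=(\xad,\Phiad)$ at these parameters. The engine is the minimality inequality tested against the feasible comparison point $(\xdag,\Phidag)$: using $\calJ_A(\xdag,\Phidag)=0$ (from $A(\xdag,\Phidag)=0$ and \eqref{Qdefinite}) and assuming, as is standard, $\calR(\xdag,\Phidag)<\infty$, I obtain $\calJ_A(x_j,\Phi_j)+\alpha_j\calR(x_j,\Phi_j)\le\alpha_j\calR(\xdag,\Phidag)$. Nonnegativity of both left-hand terms then yields $\calJ_A(x_j,\Phi_j)\le\alpha_j\calR(\xdag,\Phidag)\to0$ and, dividing by $\alpha_j$, $\calR(x_j,\Phi_j)\le\calR(\xdag,\Phidag)$, so $\calJ_A(x_j,\Phi_j)+\calR(x_j,\Phi_j)$ stays bounded. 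Together with the box and observation constraints this places $(x_j,\Phi_j)$, for $j$ large, in a $\calT$-compact level set: I would apply the compactness assumption with the exact data $y$ and a fixed noise level, which is legitimate because $\norm{C(\Phi_j)-y}_{L^\infty}\le\norm{C(\Phi_j)-y^{\delta_j}}_{L^\infty}+\norm{y^{\delta_j}-y}_{L^\infty}\le(\tau+1)\delta_j\to0$. Hence a $\calT$-convergent subsequence exists.

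To identify any subsequential limit $(\bar x,\bar\Phi)$, I would first use $\calT$-lower semicontinuity of $\calJ_A$ and $\calJ_A(x_j,\Phi_j)\to0$ to get $0\le\calJ_A(\bar x,\bar\Phi)\le\liminf\calJ_A(x_j,\Phi_j)=0$, whence $A(\bar x,\bar\Phi)=0$ by \eqref{Qdefinite}. This is exactly the hypothesis needed to invoke \eqref{xbarubar}: combined with $\norm{C(\Phi_j)-y}_{L^\infty}\to0$ it gives $C(\bar\Phi)=y$, so $(\bar x,\bar\Phi)$ solves \eqref{Axu}, \eqref{Cuy}. Since $\calJ_A(x_j,\Phi_j)\to0$ and $\norm{C(\Phi_j)-y}_{L^\infty}\to0$ hold along the full sequence, the same reasoning applies to every $\calT$-convergent subsequence. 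Finally, if $(\xdag,\Phidag)$ is the unique solution, a subsequence-of-subsequence argument closes the gap: were the whole family not $\calT$-convergent to $(\xdag,\Phidag)$, some subsequence would remain outside a $\calT$-neighborhood of it, yet by the above it would admit a further subsequence converging to a solution, necessarily $(\xdag,\Phidag)$ by uniqueness, a contradiction.

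The step I expect to be the crux is the passage to the limit in the observation equation, namely deducing $C(\bar\Phi)=y$. Because the states converge only weakly in $\calT_U$ while $C$ need not be weak-to-norm continuous, one cannot simply push $C(\Phi_j)\to y$ through to $C(\bar\Phi)$; this is precisely why Assumption \ref{ass1} supplies the tailored implication \eqref{xbarubar}, and why establishing $A(\bar x,\bar\Phi)=0$ \emph{first} is essential, as it is one of that implication's hypotheses. A secondary technical point is the bookkeeping that lets the fixed-data compactness assumption be applied along a sequence with varying data $y^{\delta_j}$, which I handle by comparing against the exact data $y$ as above.
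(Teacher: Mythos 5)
Your proposal is correct and follows essentially the same route as the paper's proof: compare against the admissible exact solution $(\xdag,\Phidag)$ to get the estimate $\calJ_A\le\alpha\calR(\xdag,\Phidag)$, $\calR\le\calR(\xdag,\Phidag)$, place the minimizing sequence (respectively the family of regularized solutions) in a $\calT$-compact level set, use $\calT$ lower semicontinuity plus \eqref{Qdefinite} to obtain $A(\bar x,\bar\Phi)=0$, invoke \eqref{xbarubar} for $C(\bar\Phi)=y$, and finish with a subsequence-subsequence argument for uniqueness. If anything, your bookkeeping is slightly more careful than the paper's: you make explicit the degenerate cases (infimum $+\infty$, finiteness of $\calR(\xdag,\Phidag)$) and, in the convergence step, justify applying the fixed-data level-set compactness to a sequence with varying data $y^{\delta_j}$ by embedding it, for large $j$, into a level set built from the exact data $y$ --- a point the paper passes over silently.
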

\begin{proof}
For fixed $\alpha>0$, minimality and admissibility of $(\xdag,\Phidag)$ for \eqref{minJRaaoM}, together with the fact that $A(\xdag,\Phidag)=0$, yields the estimate
\[
\calJ_A(x_\alpha^\delta,\Phi_\alpha^\delta)+\alpha\cdot\calR(x_\alpha^\delta,\Phi_\alpha^\delta)=\Tikh_\alpha(x,\Phi;\ydel)\leq \Tikh_\alpha(\xdag,\Phidag;\ydel)=\alpha\cdot\calR(\xdag,\Phidag)\,,
\]
hence
\begin{equation}\label{estQR}
\calJ_A(x_\alpha^\delta,\Phi_\alpha^\delta)\leq \alpha\cdot\calR(\xdag,\Phidag) \,,\qquad\calR(x_\alpha^\delta,\Phi_\alpha^\delta)\leq\calR(\xdag,\Phidag)\,,
\end{equation}
so that it suffices to restrict the search for a minimizer to the level set $L_c$ with $c=(1+\alpha)\calR(\xdag,\Phidag)$. Thus, w.l.o.g., a minimizing sequence $(x_n,\Phi_n)_{n\in\N}$ such that $\lim_{n\to\infty}\Tikh_\alpha(x_n,\Phi_n)=\inf\{\Tikh_\alpha(x,\Phi;\ydel)\, : \, \ul{x}\leq x \leq \ol{x} \mbox{ and } \ydel-\tau\delta\leq C(\Phi)\leq \ydel+\tau\delta \}$ is contained in $L_c$ and thus has a $\calT$ convergent subsequence with limit $(\bar{x},\bar{\Phi})\in L_c$. Thus, $(\bar{x},\bar{\Phi})$ satisfies  the box constraints and $\calT$ lower semicontinuity of  $\Tikh_\alpha$ yields minimality of $(\bar{x},\bar{\Phi})$.

To prove convergence as $\delta\to0$, $\alpha=\alpha(\delta)\to0$, we again invoke the minimality estimate
which yields \eqref{estQR}. As a consequence, since w.l.o.g. $\alpha\leq1$, $\calT$ compactness of $L_{2\calR(\xdag,\Phidag)}$ implies that
$(x_{\alpha(\delta)}^\delta,\Phi_{\alpha(\delta)}^\delta)_{\delta>0}$ has a $\calT$ convergent subsequence.
For the limit $(\bar{x},\bar{\Phi})$ of any $\calT$ convergent subsequence, by $\alpha(\delta)\to0$, the first estimate in \eqref{estQR}, and $\calT$ lower semicontinuity of $\calJ_A$, we get $\calJ_A(\bar{x},\bar{\Phi})=0$, hence by \eqref{Qdefinite}, $(\bar{x},\bar{\Phi})$ satisfies \eqref{Axu}. Moreover, due to the box constraints on $C(\Phi)$, the limit  $\delta\to0$, and \eqref{xbarubar}, this limit $(\bar{x},\bar{\Phi})$ also  satisfies \eqref{Cuy}. Convergence of the whole family in case of uniqueness follows from a subsequence-subsequence argument.
\end{proof}

\begin{remark}\label{rem:R}
In view of estimate \eqref{estQR}, which in case $\calR=0$ implies that $(x_\alpha^\delta,\Phi_\alpha^\delta)$ satisfies the model equation \eqref{Axu} exactly, the presence of a strictly positive regularization term can be viewed as a relaxation of the model equation.
Also note that the regularization term is here not necessarily needed for stability of $x$, since this is already achieved by the pointwise bounds. Still, the presence of the term $\alpha\cdot\calR$ may help to enable existence of minimizers of \eqref{minJRaaoM} and in this sense, well-posedness of the regularized problem.
\end{remark}

\section{Application examples}\label{sec:appex}
In this section we provide some examples of coefficient identification problems in elliptic boundary value problems.

\subsection{An inverse source problem}\label{sec_bprob}
Consider identification of a spatially varying source term $f$ (e.g., a heat source) in the elliptic boundary value problems
\begin{equation}\label{PDEs_bprob}
\begin{array}{rcl}
-\Delta \phi_i &=f&\mbox{ in }\Omega\ ,\\
\frac{\partial \phi_i}{\partial \nu}&=j_i&\mbox{ on }\Gamma\subseteq\partial\Omega\ ,\\
\phi_i&=0&\mbox{ on }\partial\Omega\setminus\Gamma\,,
\end{array} \quad i = 1,\ldots,I\ ,
\end{equation}
from observations $y_i=C_i(\phi_i)$ of $\phi_i$, $i = 1,\ldots,I$ (e.g., temperatures) in the interior and/or on the boundary of the domain $\Omega$.
This is a linear inverse problem which can be formulated as a linear operator equation or as a quadratic minimization problem.

We use the function spaces
\begin{equation}\label{V}
V=H^1_\diamondsuit(\Omega)=\begin{cases}
\setof{v\in H^1(\Omega)}{\mathrm{tr}_{\partial\Omega\setminus\Gamma}v=0}\ , \mbox{ if } \mbox{meas}(\partial\Omega\setminus\Gamma)>0\ ,
\\
\setof{v\in H^1(\Omega)}{\int_{\partial\Omega}\mathrm{tr}_{\partial\Omega}v\, ds =0}\ , \mbox{ if } \mbox{meas}(\partial\Omega\setminus\Gamma)=0\,,
\end{cases}
\end{equation}
(where in the latter case we assume that $\int_\Omega f \, dx=0=\int_{\partial\Omega} j_i\, ds$)
and the negative Laplace operator
\[
D:V\to V^*\,, \quad \langle D v,w\rangle_{V^*,V}= \int_\Omega \nabla v\cdot \nabla w \, dx\,,
\]
which is an isomorphism between $V$ and its dual $V^*$, thus we can use
\[
\|v\|_V:= \sqrt{\langle D v,v\rangle_{V^*,V}}= \sqrt{\int_\Omega|\nabla v|^2\, dx}\ ,
\]
as a norm on $V$.
With this function space setting and the functionals
$b_f,g_i\in V^*$ defined by
\[
\langle b_f, v\rangle_{V^*,V} =\int_\Omega f v\, dx\,, \qquad
\langle g_i, v\rangle_{V^*,V} =\int_{\Gamma} j_i \mathrm{tr}_\Gamma v\, ds\,,
\]
the weak form of \eqref{PDEs_bprob} reads as
\[
D \phi_i =b_f+g_i \mbox{ in } V^*\,, \quad i = 1,\ldots,I\,.
\]
Here $b_f$ plays the role of the parameter $x$ in the previous section, and the state consists of $\Phi:=(\phi_1,\ldots,\phi_I)$.

We will particularly concentrate on the practically relevant observations
\begin{equation}\label{C}
C_iv=\mathrm{tr}_{\partial\Omega}v\mbox{ or } C_iv=v\vert_{\omega_o}\ ,
\end{equation}
where in the latter case $\omega_o\subseteq\Omega$ is a measurable subset with positive measure, and thus use the data space
\begin{equation}\label{Y}
Y=L^\infty(\tilde{\Omega})^I\mbox{ for }\tilde{\Omega}=\partial\Omega\mbox{ or }\tilde{\Omega}=\omega_o, \mbox{ respectively.}
\end{equation}

From the point of view of elliptic PDEs, a natural norm for measuring the deviation from the model, i.e., the residual of the PDE, is the $H^{-1}$ norm, which can be implemented by using the inverse of the negative Dirichlet Laplacian. This results in the cost function
\[
\hspace*{-0.3cm}\begin{aligned}
&\calJ_A(b_f,\Phi)=\tfrac12 \sum_{i=1}^{I} \|D \phi_i-b_f-g_i\|_{V^*}^2
=\tfrac12 \sum_{i=1}^{I} \|\phi_i-D^{-1}(b_f+g_i)\|_{V}^2\\
&=\tfrac12 \sum_{i=1}^{I} \langle D\phi_i-(b_f+g_i),\phi_i-D^{-1}(b+g_i)\rangle_{V^*,V}\\
&=\tfrac12 \sum_{i=1}^{I} \Bigl(\int_\Omega|\nabla\phi_i|^2\, dx
-2\langle b_f,\phi_i\rangle_{V^*,V}
-2 \int_{\Gamma} j_i \mathrm{tr}_\Gamma \phi_i\, ds
+\langle b_f+g_i,D^{-1}(b_f+g_i)\rangle_{V^*,V},
\end{aligned}
\]
and leads to a formulation of the
inverse problem as a constrained minimization problem
\begin{equation}\label{bprob_min}
\min_{b_f,\Phi} \calJ_A(b_f,\Phi)\quad
\mbox{s.t. } C\phi_i=y_i\,, \ i=1,\ldots,I\,.
\end{equation}

This directly corresponds to the reformulation \eqref{minJ_Morozov} of the all-at-once version \eqref{Axu},\eqref{Cuy} with $\calJ_A(b_f,\Phi)=\frac{1}{2}\|A(b_f,\Phi)\|_W^2$,
$U=V^I$, $W=(V^*)^I$
\begin{eqnarray}
&A:L^\infty(\Omega)\times V^I\to (V^*)^I\,, \quad A(b_f,\Phi)=(D\phi_i-b_f-g_i)_{i=1}^I\ , \nonumber\\
&C:V^I\to Y=(Y_1,\ldots,Y_I)\,, \quad C\Phi=(C_i\phi_i)_{i=1}\,, \quad C_i:V\to Y_i\,,
\label{Ci}
\end{eqnarray}
while a reduced one \eqref{Fxy} can be defined via the linear forward operator
\[
F:X=L^\infty(\Omega)\to Y\,, \quad Fb_f=(C_i(D^{-1}(b_f+g_i)))_{i=1}^I\,.
\]

A priori information on pointwise lower and upper bounds $\underline{b}, \overline{b}$ of the source term, as often available in practice, and a relaxation of the observation equation according to the discrepancy principle leads to the regularized problem
\begin{equation}\label{bprob_min_reg}
\min_{b_f,\Phi} \calJ_A(b_f,\Phi)\quad
\mbox{s.t. }\ul{b}\leq b_f\leq \ol{b} \mbox{ a.e. in }\Omega\,, \quad
\|C_i\phi_i-y_i^\delta\|_{Y_i}\leq\tau\delta\,, \ i=1,\ldots,I\,.
\end{equation}
cf. \eqref{minJRaaoM}. Note that we set $\calR$ to zero here, which is feasible in the setting of Proposition \ref{prop_conv}, as long as Assumption \ref{ass1} can be verified.

To do so, we define the topology $\calT_U$ on $U=V^I$ by
\begin{equation}\label{topo}
\Phi_n\stackrel{\calT_U}{\to} \Phi\ \Leftrightarrow \
\begin{cases}
\Phi_n \to \Phi \mbox{ in }L^2(\Omega)^{I}\ ,\\
\Phi_n \rightharpoonup \Phi \mbox{ in }H^1(\Omega)^{I}\,, \\
C\Phi_n\to C\Phi \mbox{ in }L^\infty(\tilde{\Omega})^{I}\ .
\end{cases}
\end{equation}
Therewith, $\calT$ compactness of level sets $L_c$ obviously holds.
Indeed, boundedness of $\calJ_A(b_n,\Phi_n)$ and $L^\infty(\Omega)$ (hence $V^*$) boundedness of $b_n$ implies boundedness of $\Phi_n$ in $H^1(\Omega)^{I}$, which implies existence of subsequences $(b_{n_k},\Phi_{n_k})$ converging weakly * in $L^\infty(\Omega)$, as well as according to the first two limits in \eqref{topo}, to some $(\bar{b}_f,\bar{\Phi})$. By boundedness of $C\Phi_{n_k}$ in $L^\infty(\tilde{\Omega})^{I}$, we can extract another subsequence (without relabelling) such that $C\Phi_{n_k}\stackrel{*}{\rightharpoonup}\bar{y}$ in $L^\infty(\tilde{\Omega})$.
It remains to show that $\bar{y}=C\bar{\Phi}$.
In both cases of \eqref{C}, the operator $C$ is continuous as a mapping from $V$ to $L^p(\tilde{\Omega})$ for some $p\in(1,\infty)$ (in the first case, due to the Trace Theorem, in the second case, due to continuity of the embedding $V\to L^p(\Omega)$ and of the restriction operator $L^p(\Omega)\to L^p(\omega_o)$), hence, as a linear operator it is also weakly continuous. Thus, $C\Phi_{n_k}\rightharpoonup C(\bar{\Phi})$ in $L^p(\tilde{\Omega})$, which by uniqueness of weak limits implies
$\bar{y}=C\bar{\Phi}$.
By weak lower semicontinuity of the norms, we have $(\bar{b}_f,\bar{\Phi})\in L_c$.

Also $\calT$ lower semicontinutiy of $\calJ_A$ is a direct consequence of weak lower semicontinuity of the $V^*$ norm and the fact that $\calT$ convergence of $(b_n,\Phi_n)$ to $(\bar{b}_f,\bar{\Phi})$ implies weak convergence of $D \phi_{n,i}-b_n-g_i$ to $D \bar{\phi}_i-\bar{b}_f-g_i$  in $V^*$.

Moroever the last line in \eqref{topo} immediately implies \eqref{xbarubar}.

We mention in passing that Assumption \ref{ass1} remains valid if we add a regularization term $\alpha\cdot\calR$ with $\calR$ defined, e.g, by some positive power of a norm.

\subsection{Identification of a potential}\label{sec_cprob}
We now consider a nonlinear inverse problem for an elliptic PDE, namely recovery of the distributed coefficient $x=c$ in the boundary value problem
\begin{equation}\label{PDEs_cprob}
\begin{array}{rcl}
-\Delta \phi_i+ c \phi_i&=f_i&\mbox{ in }\Omega\ ,\\
\frac{\partial \phi_i}{\partial \nu}&=j_i&\mbox{ on }\Gamma\subseteq\partial\Omega\ ,\\
\phi_i&=0&\mbox{ on }\partial\Omega\setminus\Gamma\,,
\end{array} \quad i = 1,\ldots,I\ ,
\end{equation}
from interior or boundary observations $y_i=C_i(\phi_i)$ of the state $\Phi=(\phi_1,\ldots,\phi_I)$, where $\Omega\subseteq\R^d$, $d\in\{2,3\}$ is a Lipschitz domain and the excitation is done via the sources $f_i$ and the Neumann data $j_i$, which are assumed to be known.

In case $c$ is nonnegative almost everywhere in $\Omega$, the above PDE is elliptic and thus the forward problem of computing $\phi_i$ in \eqref{PDEs_cprob} is well-posed. The situation is to some extent similar (but technically more challenging, cf., e.g., \cite{Faucheretal}) when replacing the PDE in \eqref{PDEs_cprob} by the Helmholtz equation
\begin{equation}\label{Helmholtz}
-\Delta\phi_i - \frac{\omega^2}{c_0^2} \phi_i=f_i\quad \mbox{ in }\Omega\ ,
\end{equation}
as long as one can guarantee that $\omega^2$ stays away from the eigenfrequencies of $-c_0^2\Delta$.
This model, together with boundary observations of $\phi_i$ is the frequency domain version of the seismic inverse problem of recovering the spatially varying wave speed $c_0$ in the subsurface from surface measurements of the acoustic pressure $\phi_i$, often referred to as full waveform inversion (FWI) cf., e.g., \cite{Faucheretal} and the references therein.

The approach we are following here does not require well-definedness of the parameter-to-state map $S$ and therefore allows to consider \eqref{PDEs_cprob} with arbitrary $c\in L^\infty(\Omega)$, in particularly also \eqref{Helmholtz} without restriction on the frequency $\omega$. We will demonstrate this by means of some numerical experiments with negative and mixed sign coefficients $c$ in \eqref{PDEs_cprob} in Section \ref{sec:num}.

The weak form of \eqref{PDEs_cprob} can be written as
\[
D_c \phi_i =g_i \mbox{ in } V^*\,, \quad i = 1,\ldots,I\ ,
\]
where
\[
D_c:V\to V^*\,, \quad \langle D_c v,w\rangle_{V^*,V}= \int_\Omega
(\nabla v\cdot \nabla w +c v w)\, dx \ ,
\]
and $g_i\in V^*$ is defined by
\[
\langle g_i, v\rangle =\int_\Omega f_i v\, dx + \int_{\Gamma} j_i
\mathrm{tr}_\Gamma v\, ds \ ,
\]
with $V=H^1_\diamondsuit(\Omega)$ according to \eqref{V}, where in the pure Neumann case $\mbox{meas}(\partial\Omega\setminus\Gamma)=0$ we assume that $\int_\Omega f_i \, dx+\int_{\partial\Omega} j_i\, ds =0$.

Analogously to the inverse source problem above, we can formulate a cost functional by using the $V^*$ norm of the residual in the state equation.
\begin{equation}\label{Jc}
\begin{aligned}
 \calJ_A(c,\Phi) =\tfrac12 \sum_{i=1}^{I} \|D_c
   \phi_i-g_i\|_{V^*}^2& \\
=\tfrac12 \sum_{i=1}^{I} \Bigl(\int_\Omega(|\nabla\phi_i|^2 + 2c\phi_i^2)\, dx
-& 2 \langle g_i,\phi_i\rangle_{V^*,V} +\langle
c\phi_i-g_i,D^{-1}(c\phi_i-g_i)\rangle_{V^*,V}\,.
\end{aligned}
\end{equation}

Starting from the mimimization based formulation \eqref{bprob_min} (with $b_f$ replaced by $c$ and $\calJ_A$ according to \eqref{Jc}) of the inverse problem, again we can define the regularized problem without using $\calR$
\begin{equation}\label{cprob_min_reg}
\min_{c,\Phi} \calJ_A(c,\Phi)\quad
\mbox{s.t. }0\leq\ul{c}\leq c\leq \ol{c}\ \mbox{ a.e. in }\Omega\,, \
\ \
\|C_i\phi_i-y_i^\delta\|_{Y_i}\leq\tau\delta\,, \ i=1,\ldots,I\,,
\end{equation}
where this time we also guarantee nonnegativity of $c$ by means of the lower bound.

The topology to verify Assumption \ref{ass1} is the same as in the previous example \eqref{topo} and the verfication of $\calT$ compactness of level sets and of \eqref{xbarubar} goes exactly like in Section \ref{sec_bprob}.
To see $\calT$ lower semicontinuity of $\calJ_A$, consider a sequence $(c_n,\Phi_n)\stackrel{\calT}{\to} (\bar{c},\bar{\Phi})$, which implies $V^*$ convergence of
$D_{c_n}\phi_{n,i}-g_i$ to $D_{\bar{c}}\bar{\phi}_i-g_i$ as follows. For any $\psi\in V$, we have
\[
\begin{aligned}
&\left|\langle D_{c_n}\phi_{n,i}-D_{\bar{c}}\bar{\phi}_i, \psi\rangle_{V^*,V}\right|
\leq\underbrace{\left|\int_\Omega \nabla(\phi_{n,i}-\bar{\phi}_i)\cdot\nabla\psi\, dx\right|}_{\to0}\\
&+\underbrace{\left|\int_\Omega c_n(\phi_{n,i}-\bar{\phi}_i)\psi\, dx\right|}_{
\leq \|c_n\|_{L^\infty(\Omega)}\|\phi_{n,i}-\bar{\phi}_i\|_{L^2(\Omega)}\|\psi\|_{L^2(\Omega)}}
+\left|\int_\Omega (c_n-\bar{c})\underbrace{\bar{\phi}_i\psi}_{\in L^1(\Omega)}\Bigr)\, dx\right|
\to0 \mbox{ as }n\to\infty\,.
\end{aligned}
\]
By weak lower semicontinuity of the $V^*$ norm, this implies $\calT$ lower semicontinuity of $\calJ_A$.

\begin{remark}\label{rem:altJ}
In the elliptic case $c\geq0$ in \eqref{PDEs_cprob},
alternatively to $D$, the operator $D_c$can be used as an isomorphism between $V$ and $V^*$, which induces the norms
\[
\begin{aligned}
&\|v\|_{H^1_c(\Omega)}^2:=\langle D_c v, v\rangle_{V^*,V}
=\int_\Omega (|\nabla v|^2 +cv^2)\, dx \,, \quad v\in V=H^1_\diamondsuit(\Omega)\\
&\|v^*\|_{H^{-1}_c(\Omega)}^2:=\langle v^*, D_c^{-1}v^*\rangle_{V^*,V}
 \,, \quad v^*\in V^*\ .
\end{aligned}
\]
Thus for $c\geq0$, we could define the cost function by
\begin{equation}\label{tilJc}
\begin{aligned}
&\tilde{\calJ}_A(c,\Phi)=\tfrac12 \sum_{i=1}^{I} \|D_c \phi_i-g_i\|_{H^{-1}_c(\Omega)}^2
=\tfrac12 \sum_{i=1}^{I} \|\phi_i-D_c^{-1}g_i\|_{H^1_c(\Omega)}^2\\
&=\tfrac12 \sum_{i=1}^{I} \Bigl(\int_\Omega(|\nabla\phi_i|^2 +c\phi_i^2)\, dx
-2 \langle g_i,\phi_i\rangle_{V^*,V} +\langle g_i,D_c^{-1}g_i\rangle_{V^*,V}\,.
\end{aligned}
\end{equation}
Due to  the use of a parameter dependent norm, this is really different from the standard reformulation \eqref{minJ_Morozov} of the all-at-once version \eqref{Axu}, \eqref{Cuy} with $\calJ_A(c,\Phi)=\frac{1}{2}\|A(c,\Phi)\|_W^2$, $U=V^I$, $W=(V^*)^I$, and
\[
\begin{aligned}
&A:L^\infty(\Omega)\times V^I\to (V^*)^I\,, \quad A(c,\Phi)=(D_c\phi_i-g_i)_{i=1}^I\ ,
\end{aligned}
\]
and \eqref{Ci}, as actually given by \eqref{Jc}.

However, the last term $\langle g_i,D_c^{-1}g_i\rangle_{V^*,V}$ in \eqref{tilJc} would spoil $\calT$ semicontinuity of $\calJ_A$ even if we add some higher norm of $\Phi$ as a regularization term and strengthen $\calT$ accordingly (see \eqref{topo_a} below for a different application example). Thus to establish existence of a minimizer of the regularized problem with $\tilde{\calJ}_A$, we would have to additionally regularize $c$ e.g., by a total variation term, which would require more sophisticated discretization than piecewise constant finite elements, though.
Moroever, due to this last term, evaluation of the cost function \eqref{tilJc} for some iterate $c_n$ requires solving boundary value problems with the elliptic operator $D_{c_n}=-\Delta+c_n\cdot$ changing in each iteration, while \eqref{Jc} only involves simple Laplace problems. Thus we furtheron stay with the formulation based on \eqref{Jc}.

\medskip

We mention in passing that a reduced formulation \eqref{Fxy} can be defined by means of the nonlinear forward operator
$F:X=L^\infty(\Omega)\to Y\,, \quad F(c)=(C_i(D_c^{-1}(g_i)))_{i=1}^I$,
$\calD=\setof{c\in L^\infty(\Omega)}{0\leq c\leq \ol{c}\ \mbox{ a.e. in }\Omega}$.
\end{remark}

\subsection{Identification of a diffusion coefficient}\label{sec_aprob}
Another nonlinear problem is the identification of the distributed diffusion coefficient $x=a$ in the elliptic boundary value problem
\begin{equation}\label{PDEs_aprob}
\begin{array}{rcl}
-\nabla\cdot(a\nabla\phi_i)&=f_i&\mbox{ in }\Omega\ ,\\
a\frac{\partial \phi_i}{\partial \nu}&=j_i&\mbox{ on }\Gamma\subseteq\partial\Omega\,,\\
\phi_i&=0&\mbox{ on }\partial\Omega\setminus\Gamma\,,
\end{array} \quad i = 1,\ldots,I\ ,
\end{equation}
from observations $y_i=C_i(\phi_i)$ of the state $\Phi=(\phi_1,\ldots,\phi_I)$.
Again, $\Omega\subseteq\R^d$, $d\in\{2,3\}$ is a Lipschitz domain, the excitations $f_i$, $j_i$ are assumed to be known, and we focus on the observation setting \eqref{C}, \eqref{Y}.

With
\[
D_a:V\to V^*\,, \quad \langle D_a v,w\rangle_{V^*,V}= \int_\Omega
a\nabla v\cdot \nabla w\, dx\ ,
\]
and the function space $V=H^1_\diamondsuit(\Omega)$ according to \eqref{V} (where again in case $\mbox{meas}(\partial\Omega\setminus\Gamma)=0$ we assume that $\int_\Omega f_i \, dx+\int_{\partial\Omega} j_i\, ds =0$), as well as $g_i\in V^*$ defined by
\[
\langle g_i, v\rangle =\int_\Omega f_i v\, dx + \int_{\Gamma} j_i \mathrm{tr}_\Gamma v\, ds\ ,
\]
the weak form of \eqref{PDEs_aprob} can be written as
\[
D_a \phi_i =g_i \mbox{ in } V^*\,, \quad i = 1,\ldots,I\ .
\]

Note that with $f_i=0$, $\Gamma=\partial\Omega$, $C_i=\mathrm{tr}_{\partial\Omega}$,  this setting comprises the electrical impedance tomography (EIT) problem  of recovering the conductivity $a$ from several current-voltage measurements $(j_i,\mathrm{tr}_{\partial\Omega}\phi_i)$, $i = 1,\ldots,I$, cf., e.g., \cite{BorceaEIT} and the references therein.

To derive a minimization based formulation of this inverse problem we define
\[
\begin{aligned}
&\calJ_A(a,\Phi)=
\tfrac12 \sum_{i=1}^{I} \|D_a\phi_i-g_i\|_{V^*}^2
=\tfrac12 \sum_{i=1}^{I} \langle
D_a\phi_i-g_i),D^{-1}(D_a\phi_i-g_i)\rangle_{V^*,V}\ ,\end{aligned}
\]
in
\begin{equation}\label{aprob_min}
\min_{a,\Phi} \calJ_A(a,\Phi)\quad \mbox{s.t. }C\phi_i=y_i\,, \ i=1,\ldots,I\,,
\end{equation}
which corresponds to the reformulation \eqref{minJ_Morozov} of the all-at-once version \eqref{Axu},\eqref{Cuy} with $\calJ_A(a,\Phi)=\frac{1}{2}\|A(a,\Phi)\|_W^2$,
$U=V^I$, $W=(V^*)^I$,
\[
\begin{aligned}
&A:L^\infty(\Omega)\times V^I\to (V^*)^I\,, \quad A(a,\Phi_1)=(D_a\phi_i-g_i)_{i=1}^I\,,
\end{aligned}
\]
and \eqref{Ci}.

As a regularized version of \eqref{aprob_min} we consider
\begin{equation}\label{aprob_min_reg}
\left\{\hspace*{-0.1cm}
\begin{aligned}
&\min_{a,\Phi} \calJ_A(a,\Phi) + \tfrac{\alpha}{2} \|\phi_i\|_{H^{3/2-\epsilon}(\Omega)}^2\\
&\hspace*{0.15cm}\mbox{s.t. }0<\ul{a}\leq a\leq \ol{a}\ \mbox{ a.e. in
}\Omega\,, \mbox{ and }
\|C_i\phi_i-y_i^\delta\|_{Y_i}\leq\tau\delta\,, \ i=1,\ldots,I\,.
\end{aligned}
\right.
\end{equation}
The regularization term $\|\phi_i\|_{H^{3/2-\epsilon}(\Omega)}^2$ with $\epsilon\in (0,\frac12)$ is required to guarantee existence of a minimizer of \eqref{aprob_min_reg}, while still admitting jumps in the gradient of $\phi_i$, hence jumps in the diffusivity/conductivity $a$, as often occuring in practice.

The $\calT_U$ topolgy is here defined by
\begin{equation}\label{topo_a}
\Phi_n\stackrel{\calT_U}{\to} \Phi\ \Leftrightarrow \
\begin{cases}
\Phi_n \to \Phi \mbox{ in }H^1(\Omega)^{I}\ ,\\
\Phi_n \rightharpoonup \Phi \mbox{ in }H^{3/2-\epsilon}(\Omega)^{I}\,, \\
C\Phi_n\to C\Phi \mbox{ in }L^\infty(\tilde{\Omega})^{I}\,.
\end{cases}
\end{equation}

Therewith, Assumption \ref{ass1} can be established as follows.
Verfication of $\calT$ compactness of level sets and of \eqref{xbarubar} is the same as in Section \ref{sec_bprob}, additionally taking into account weak lower semicontinuity of the $H^{3/2-\epsilon}(\Omega)$ norm.
Moreover, $\calT$ convergence of $(a_n,\Phi_n)$ to $(\bar{a},\bar{\Phi})$ implies weak $V^*$ convergence of $D_{a_n}\phi_{n,i}-g_i$ to $D_{\bar{a}}\bar{\phi}_i-g_i$ as follows. For any $\psi\in V$
\[
\begin{aligned}
&\left|\langle D_{a_n}\phi_{n,i}-D_{\bar{a}}\bar{\phi}_i, \psi\rangle_{V^*,V}\right|\\
&\leq\underbrace{\left|\int_\Omega a_n\nabla(\phi_{n,i}-\bar{\phi}_i)\cdot\nabla\psi\, dx\right|}_{\leq \|a_n\|_{L^\infty(\Omega)}\|\phi_{n,i}-\bar{\phi}_i\|_{H^1(\Omega)}\|\psi\|_{H^1(\Omega)}}
+\left|\int_\Omega (a_n-\bar{a})\underbrace{\nabla\bar{\phi}_i\cdot\nabla\psi}_{\in L^1(\Omega)}\Bigr)\, dx\right| \to0 \mbox{ as }n\to\infty\,.
\end{aligned}
\]
This implies $\calT$ lower semicontintuity of $\calJ_A(a,u)=\frac{1}{2}\|A(a,u)\|_{(V^*)^I}^2$.

\begin{remark}
Analogously to Remark \ref{rem:altJ} we could use the parameter dependent norm
\[
\|v\|_{H^1_a(\Omega)}:=\sqrt{\langle D_a v,v\rangle_{V^*,V}}=\sqrt{\int_\Omega a|\nabla v|^2\, dx} \,, \quad v\in V=H^1_\diamondsuit(\Omega)\ ,
\]
on $V$ for $a\in L^\infty(\Omega)$ positive and bounded away from zero, to define the cost function
\[
\begin{aligned}
&\tilde{\calJ}_A(a,\Phi)=
\tfrac12 \sum_{i=1}^{I} \|D_a\phi_i-g_i\|_{H^{-1}_a(\Omega)}^2=
\tfrac12 \sum_{i=1}^{I} \|\phi_i-D_a^{-1}g_i\|_{H^1_a(\Omega)}^2\\
&=\tfrac12\sum_{i=1}^{I}\Bigl(
\langle \phi_i,D_a^{-1}\phi_i\rangle_{V^*,V}
-2\langle g_i,\phi_i\rangle_{V^*,V}
+\langle g_i,D_a^{-1}g_i\rangle_{V^*,V}\Bigr)\,.
\end{aligned}
\]
However, again the last term would spoil $\calT$ semicontinuity of $\calJ_A$ and necessitate additional regularizion of $a$, which we wish to avoid.

\medskip

A reduced formulation \eqref{Fxy} could here be defined via
$F:\calD\to Y\,, \quad F(a)=(C_i(D_a^{-1}g_i))_{i=1}^I$,
$\calD=\setof{a\in L^\infty(\Omega)}{0<\ul{a}\leq a\leq \ol{a} \mbox{ a.e. in }\Omega}$.)
\end{remark}

\section{Gauss-Newton SQP method} \label{sec:GNSQP}

Discretization of $x$ and $u$ with piecewise constant and piecewise linear finite elements, respectively, leads to finite dimensional nonlinear box constrained minimization problems.
We solve these iteratively by Gauss-Newton type SQP methods, which means that we approximate the Hessian of the Lagrangian by linearizing under the norm that defines the cost function, i.e., skipping higher than first order derivatives of the operator $A$, and leave the constraints unchanged due to their linearity.

For the potential identification problem \eqref{cprob_min_reg} from Section \ref{sec_cprob}, this means that we successively have to solve a discretized version of the quadratic minimization problem
\[
\left\{\hspace*{-0.1cm}
\begin{aligned}
&\min_{c,\Phi} \calJ_k(c,\Phi)\\
&\hspace*{0.15cm}\mbox{s.t. }0\leq\ul{c}\leq c\leq \ol{c}\ \mbox{ a.e. in }\Omega\,, \mbox{ and }
\ydel_i-\tau\delta\leq C_i \phi_i\leq \ydel_i+\tau\delta\,, \ i=1,\ldots,I\,,
\end{aligned}
\right.
\]
where with $d_{k,i}=D^{-1}(c_k\phi_{k,i}+g_i)$, the cost function can be rewritten as
\[
\hspace*{-0.3cm}\begin{aligned}
&\calJ_k(c,\Phi)=\tfrac12 \sum_{i=1}^{I}
\|D \phi_i+c_k\phi_i+c\phi_{k,i}-(c_k\phi_{k,i}+g_i)\|_{V^*}^2=\\
& \sum_{i=1}^{I}
\Bigl(\int_\Omega (\tfrac12|\nabla\phi_i|^2+c_k\phi_i^2+c\phi_{k,i}\phi_i
-(c_k\phi_{k,i}+c_kd_{k,i}+f_i)\phi_i-c \phi_{k,i}d_{k,i})\, dx\\
&\quad-\int_{\partial\Omega}j_i\phi_i\, ds
+\tfrac12\langle c_k\phi_i+c\phi_{k,i}, D^{-1} (c_k\phi_i+c\phi_{k,i})\rangle_{V^*,V}
+\tfrac12\langle c_k\phi_{k,i}+g_i, d_{k,i} \rangle_{V^*,V}\Bigr).
\end{aligned}
\]

For the diffusion identification problem \eqref{aprob_min} from Section \ref{sec_aprob}, the quadratic minimization problem in each Newton step reads as
\[
\left\{\hspace*{-0.1cm}
\begin{aligned}
&\min_{a,\Phi} \calJ_k(a,\Phi)\\
&\hspace*{0.15cm}\mbox{s.t. }0<\ul{a}\leq a\leq \ol{a}\ \mbox{ a.e. in }\Omega\,,
\mbox{ and }
\ydel_i-\tau\delta\leq C_i \phi_i\leq \ydel_i+\tau\delta\,, \ i=1,\ldots,I\,,
\end{aligned}
\right.
\]
with
\[
\begin{aligned}
&\calJ_k(a,\Phi)=\tfrac12 \sum_{i=1}^{I}
\|D_{a_k}\phi_i+(D_a-D_{a_k})\phi_{k,i}-g_i\|_{V^*}^2=\\
&\sum_{i=1}^{I}
\Bigl(
\tfrac12\langle e_i,D^{-1}e_i\rangle_{V^*,V}
+\tfrac12\langle D_a\phi_{k,i}, D^{-1}D_a\phi_{k,i}\rangle_{V^*,V}
+\tfrac12\langle D_{a_k}\phi_{k,i}+g_i,d_{k,i}\rangle_{V^*,V}\\
&\quad+\langle D_a\phi_{k,i},D^{-1}e_i\rangle_{V^*,V}
-\langle e_i,d_{k,i}\rangle_{V^*,V}
-\langle D_a\phi_{k,i},d_{k,i}\rangle_{V^*,V}
\Bigr)\,,
\end{aligned}
\]
where $e_i=D_{a_k}\phi_i\,,$ $d_{k,i}=D^{-1}(D_{a_k}\phi_{k,i}+g_i)$.
Note that we always just invert the negative Laplacian $D$ and not the parameter dependent operator $D_c$ or $D_a$ as it would be required in a reduced formulation \eqref{Fxy}.

For the inverse source problem \eqref{bprob_min}, the cost functional is already quadratic, so just one Newton step is required and coinides with the original regularized minimization problem \eqref{bprob_min}.
Thus, after discretization, each Gauss Newton step consists of solving
a strictly convex box constrained quadratic program
\begin{equation}\label{pd}
\min J(x) \quad \mbox{ s.t. } \ell \leq x \leq u\,,
\end{equation}
with $J(x) = x^TQx+q^Tx$, $Q \succ 0$, $\ell < u$, and the inequalities to be understood
component-wise.

\section{Solving strictly convex box constrained quadratic programs}\label{sec:QPbox}

In this section
we discuss available methods for
efficiently solving \eqref{pd} and provide some details on the approach
performing best in our numerical tests in Section \ref{sec:num}.

\subsection{Available methods} The minimization of a strictly convex
quadratic function under box constraints
is a fundamental problem on its own and additionally an important
building block for solving more complicated optimization problems.
Interior point, active set and gradient projection methods are the
most prominent approaches for efficiently solving \eqref{pd}.

A special type of active-set method was introduced by Bergounioux et al.\
\cite{BHHK:00,BIK:99} in connection with constrained optimal control
problems and tailored to deal with discretisations of
specially structured elliptic partial differential equations.
Their approach has turned out to be a powerful, fast and
competitive approach for \eqref{pd}. Hinterm{\"u}ller et al.\
 \cite{HIK:03} provide a theoretical explanation
 of its efficiency by interpreting it as a semismooth Newton method.
One major drawback of this method lies in the fact that it is not globally
convergent for all classes of strictly convex quadratic objectives.
Practical computational evidence shows that if the method converges at
all, it typically takes very few iterations
to reach an optimal solution, see e.g.\ \cite{JP:94,kure03}.

Several modifications of this method
were introduced recently. In \cite{HungerlaenderRendl15} we propose a
primal feasible active set method that extends the approach from
\cite{BHHK:00,BIK:99} such that
strict convexity of the quadratic objective function is sufficient for
the algorithm to stop after a finite number of steps with an optimal
solution. In \cite{HungerlaenderRendl17} we introduce yet another
modified, globally convergent version
of this active set method, which allows primal infeasiblity of the iterates and
aims at maintaining the combinatorial flavour of the original approach.

Beside their simplicity (no tuning parameters), our globally
convergent methods offer the favorable features of standard active set
approaches like the ability to find the exact numerical solution and
the possibility to warm start.
Computational experience on a variety of difficult classes of test
problems shows that
our approaches mostly outperform other existing methods for
\eqref{pd}. In the following subsection we discuss the workings of the
approach from \cite{HungerlaenderRendl15} in some detail, as it is the
best performing one for the benchmark instances considered in this paper.

\subsection{Details on a primal feasible active set method \cite{HungerlaenderRendl15}}
  It is well known that $x \in \mathbb R^n$ together with vectors
  $\alpha, \gamma \in \mathbb R^n$
of Lagrange multipliers for the box constraints furnishes the unique
global minimium of \eqref{pd} if and only if the triple
($x,\alpha,\gamma$) satisfies the KKT system
\begin{subequations}\label{KKTia}
\begin{align}
 Qx + q + \alpha - \gamma &= 0\,,\label{LKKTM} \\
 \alpha \circ (u-x) &= 0\,,\quad \gamma \circ (x-\ell) = 0\,,\label{LKKTC2} \\
u-x &\geq 0\,,\quad x-\ell \geq 0\,,\label{primf}\\
  \alpha &\geq 0\,,\quad \gamma \geq 0\,,\label{dualf}
\end{align}
\end{subequations}
where $\circ$ denotes component-wise multiplication.

The crucial step in solving \eqref{pd} is to identify those
inequalities which are active, i.e.\ the active sets $\mathcal A \subseteq
\mathcal N := \{1,\ldots,n\}$ and $\mathcal C \subseteq
\mathcal N$, where the solution to \eqref{pd} satisfies $x_{\mathcal
  A} = u_{\mathcal A}$ and $x_{\mathcal C} = \ell_{\mathcal C}$
respectively. Here, for ${\mathcal B} \subseteq {\mathcal N}$ and some vector $x\in \R^n$, $x_{\mathcal B}$ denotes restriction of the vector to components with indices in ${\mathcal B}$; analogous notation will be used for matrices.
Next let us also introduce the inactive set
$\mathcal I := \mathcal N \setminus \left(\mathcal A \cup \mathcal C\right)$.
Then we additionally require $\alpha_{\mathcal I} =
\alpha_{\mathcal C} = \gamma_{\mathcal I} = \gamma_{\mathcal A}
  = 0$ for \eqref{LKKTC2} to hold.
To simplify notation we denote KKT($\mathcal A, \mathcal C$) as
the following set of equations:
\begin{equation*}
\text{KKT($\mathcal A, \mathcal C$):}\quad Qx+q+\alpha - \gamma =
0\,,\quad x_{\mathcal A} = u_{\mathcal A}\,,\quad x_{\mathcal C} = \ell_{\mathcal
  C}\,,\quad \alpha_{\mathcal I} = \alpha_{\mathcal C} =
\gamma_{\mathcal I} = \gamma_{\mathcal A} = 0\,.
\end{equation*}
The solution of KKT($\mathcal A, \mathcal C$) satisfies stationarity
\eqref{LKKTM}
and complementary \eqref{LKKTC2} conditions and is given by
\begin{align*}
& x_{\mathcal A} = u_{\mathcal A}\,,\quad x_{\mathcal C} = \ell_{\mathcal C}\,,\quad
  Q_{\mathcal I, \mathcal I} x_{\mathcal I} = - \left(q_{\mathcal I} + Q_{\mathcal I,
  \mathcal C} \ell_{\mathcal C} + Q_{\mathcal I, \mathcal A} u_{\mathcal A}\right)\,,\\
& \alpha_{\mathcal I} = 0\,,\quad \ \ \alpha_{\mathcal C} = 0\,,\quad \ \ \alpha_{\mathcal
  A} = q_{\mathcal A} + Q_{\mathcal A, \mathcal I} x_{\mathcal I} +
  Q_{\mathcal A, \mathcal C} \ell_{\mathcal C}\,,\\
& \gamma_{\mathcal I} = 0\,,\quad \ \ \gamma_{\mathcal A} = 0\,,\quad \ \ \gamma_{\mathcal C}
  = q_{\mathcal C} + Q_{\mathcal C, \mathcal I} x_{\mathcal I} +
  Q_{\mathcal C, \mathcal A} u_{\mathcal A}\,.
\end{align*}
We write $[x,\alpha,\gamma]$ = KKT($\mathcal A$,$\mathcal C$) to indicate
that $x$, $\alpha$ and $\gamma$
satisfy KKT($\mathcal A$,$\mathcal C$).
In some cases we also write $x$ = KKT($\mathcal A$,$\mathcal C$) to
emphasize that
we only use $x$ and therefore do not need the backsolve to get
($\alpha$,$\gamma$).
If we only carry out the backsolve to get ($\alpha$,$\gamma$), we write
$[\alpha,\gamma]$ = KKT($\mathcal A$,$\mathcal C$), and it is assumed
that the corresponding $x$ is available.

Let us call the pair $(\mathcal A,\mathcal C)$ with $\mathcal A \cup \mathcal
C  \subseteq \mathcal N$ and ${\mathcal A} \cap {\mathcal C} = \emptyset$ (due to the
definition of the bounds) a primal feasible pair, if
the solution $x$ to KKT$(\mathcal A,\mathcal C)$ is primal feasible,
i.e.\ \eqref{primf} holds.
Furthermore the pair $(\mathcal A,\mathcal C)$ is called optimal if
and only if $[x,\alpha,\gamma]$ =
KKT($\mathcal A$,$\mathcal C$) satisfies both primal \eqref{primf} and
dual \eqref{dualf} feasibility,
since $x$ is then the unique solution of \eqref{pd}.

Given a non-optimal primal-feasible pair $(\mathcal A,\mathcal C)$ with
$[x,\alpha,\gamma] = $KKT$(\mathcal A,\mathcal C)$, our goal is to find a new
primal-feasible pair $(\mathcal B,\mathcal D)$ with $[y, \beta,
\delta] = $KKT$(\mathcal B,\mathcal D)$ and $J(y) < J(x)$.
We start a new iteration by using the feasibility information of the
dual variables
($\alpha$,$\gamma$) to determine the new active sets
$\mathcal{B}_s := \{ i \in \mathcal{A}: \alpha_{i} \geq 0 \}$ and
$\mathcal{D}_s := \{ i \in \mathcal{C}: \gamma_{i}\geq 0 \}$.
However the pair $(\mathcal B_s,\mathcal D_s)$ does not need to be primal
feasible.
To turn it into a primal feasible pair $(\mathcal B,\mathcal D)$,
the variables connected to the primal
infeasibilities are added to the respective active set and new
primal variables $y$ are computed. This process
is iterated until a primal feasible solution is generated, see also
Table \ref{KR-method}. This iterative scheme clearly terminates
because $\mathcal B$ and
$\mathcal D$ are only augmented by adding elements of the corresponding
inactive set $\mathcal J := \mathcal N \setminus \left(\mathcal B \cup
  \mathcal D\right)$.

\begin{table}[ht]
\begin{center}
\begin{tabular}{l}
\hline
$\mathcal B \leftarrow \mathcal B_s,\ \mathcal D \leftarrow \mathcal D_s$.\\
while $(\mathcal B, \mathcal D)$ not primal feasible:\\
\ \  $y= \text{KKT}(\mathcal B,\mathcal D),\ \mathcal B \leftarrow
  \mathcal B \cup \{i \in \mathcal N \setminus \mathcal B : y_{i}
\geq u_{i} \},\ \mathcal D \leftarrow \mathcal D \cup \{i \in \mathcal
  N \setminus \mathcal D : y_{i}
 \leq \ell_{i} \}$. \\\hline
\end{tabular}
\end{center}
\caption{Generation of a primal feasible pair $(\mathcal B,\mathcal
  D)$.}\label{KR-method}
\end{table}

In general we have no means
to ensure convergence of the algorithmic setup described so far.
The key idea to ensure convergence of active set methods consists in
establishing that some merit function strictly
decreases during successive
iterates of the algorithm. This gurantees that no active set is
considered more than once and hence cycling cannot occur. In our case
we use the objective function as merit function.

To avoid cycling,
we therefore suggest
additional measures in case $J(y) \geq J(x)$. Let us assume $(\mathcal
A,\mathcal C)$ not optimal for the following case distinction.
Note that $|\mathcal A| + |\mathcal C| \geq 1$ holds due to
\cite[Lemma 10]{HungerlaenderRendl15}. We
consider the following cases separately.

\smallskip
\noindent {\bf Case 1:} $J(y) < J(x)$.

In this case we can set $\mathcal A \leftarrow \mathcal B,\
\mathcal C \leftarrow \mathcal D$,
and continue with the next iteration.

\smallskip
\noindent {\bf Case 2:} $J(y) \geq J(x)$ and $|\mathcal A| + |\mathcal C| =1$.

The pair $(\mathcal A,\mathcal C)$ with $\mathcal A \cup \mathcal C =\{ j\}$
is primal feasible.
Since $x$ = KKT($\mathcal A$,$\mathcal C$)
is primal feasible, but not optimal, we must have
$\alpha_{j} <0$ if $j \in \mathcal A$ and $\gamma_j < 0$ if $j \in \mathcal C$.
Thus the objective function improves by allowing $x_{j}$ to move away
from the respective boundary. Hence in an optimal solution it is
essential that
$x_{j} < u_{j}$ if $j \in \mathcal A$ and $x_j > \ell_j$ if $j \in \mathcal C$. Thus we have
a problem with only $2n-1$ constraints which we solve by induction
on the number of constraints.

\smallskip
\noindent {\bf Case 3:} $J(y) \geq J(x)$ and $|\mathcal A| + |\mathcal C| > 1$.

In this case we again solve a problem with less than $2n$ constraints
to optimality. Its optimal solution then yields a feasible pair $(\mathcal
B,\mathcal D)$ with $y =$KKT$(\mathcal B,\mathcal D)$ and
$J(y)< J(x)$.
Our strategy is to identify two sets $\mathcal A_{0} \subseteq \mathcal A$ and
$\mathcal C_{0} \subseteq \mathcal C$ with $\mathcal A_0 \cup \mathcal C_0 \not =
\emptyset$ such that $x$ is feasible
but not optimal for
\begin{equation}\label{pd1}
\min J(x) \quad \mbox{ s.t. } \ell \leq x \leq u\,,\ \ x_{\mathcal
  A_{0}} = u_{\mathcal A_{0}}\,,\ \ x_{\mathcal C_{0}} = \ell_{\mathcal C_{0}}\,.
\end{equation}
Note that \eqref{pd1} is a subproblem of \eqref{pd}, where some
variables are fixed to their upper respectively lower bounds. For
details on the
choice of $(\mathcal A_{0},\mathcal C_{0})$ we refer to \cite{HungerlaenderRendl15}.

Now (recursively) solving \eqref{pd1}
yields an optimal pair $(\mathcal B_0,\mathcal D_{0})$.
We set $\mathcal B:= \mathcal A_0 \cup \mathcal B_0,\ \mathcal D := \mathcal
C_{0} \cup \mathcal D_{0}$ and compute $[y,\beta,\delta] =$KKT$(\mathcal
B,\mathcal D)$. By construction,
$(\mathcal B,\mathcal D)$ is primal feasible, and $J(y) < J(x)$
because $y$ is optimal for \eqref{pd1}. In summary we can ensure that
the objective value associated to the primal feasible pair of active
sets reduces in each iteration and therefore the feasible active set method from
\cite{HungerlaenderRendl15} is globally convergent. A corresponding
algorithmic description of the method is given in Table \ref{modified-alg_ts}.

\begin{table}[ht]
\hrule
\vskip 2mm
\centerline{Feasible active set method for solving \eqref{pd}}
\vskip 2mm
\hrule
\vskip 2mm
\noindent
{\bf Input:}
$Q \succ 0$, $\ell,\ u,\ q \in \mathbb R^n,\ \ell < u$.
$\mathcal A,\ \mathcal C \subseteq \mathcal N,\ \mathcal A \cap \mathcal C
= \emptyset$, $(\mathcal A,\mathcal C)$ primal feasible. \\
{\bf Output:}
$(\mathcal A,\mathcal C)$ optimal for \eqref{pd}.
\noindent
\vskip 2mm
\hrule
\vskip 2mm
\noindent
 $[x,\alpha,\gamma] = $KKT$(\mathcal A,\mathcal C)$\\
\noindent
{\bf while } $(\mathcal A,\mathcal C)$ not optimal for \eqref{pd}\\
\spc $\mathcal B_{s} \leftarrow \{i \in \mathcal A: \alpha_{i} \geq 0 \};
\mathcal B \leftarrow
\mathcal B_{s}$.\\
\spc $\mathcal D_{s} \leftarrow \{i \in \mathcal C: \gamma_{i} \geq
0 \};
\mathcal D \leftarrow \mathcal D_{s}$.\\
\spc $y= $KKT$(\mathcal B,\mathcal D)$.\\
\spc {\bf while} $(\mathcal B,\mathcal D)$ not primal feasible\\
\spc \spc  $\mathcal B \leftarrow \mathcal B \cup \{ i \in \overline{\mathcal B}: y_{i}
\geq u_{i} \}$.\\
\spc \spc $\mathcal D \leftarrow \mathcal D \cup \{ i \in \overline{\mathcal D}: y_{i}
\leq \ell_{i} \}$. \\
\spc \spc $y= $KKT$(\mathcal B,\mathcal D)$.\\
\spc {\bf endwhile} \\
\spc {\bf Case 1:} $J(y) < J(x)$ \\
\spc \spc $\mathcal A \leftarrow \mathcal B,\ \mathcal C \leftarrow \mathcal D$.\\
\spc {\bf Case 2:} $J(y) \geq J(x)$  and $|\mathcal A|+|\mathcal C|=1$.\\
\spc \spc  Let $(\mathcal A_{opt},\mathcal C_{opt})$ be the optimal
pair for \eqref{pd} with the upper respectively lower \\
\spc \spc bound on $\mathcal A \cup \mathcal C =\{ j \}$ removed.
$(\mathcal A_{opt},\mathcal C_{opt})$ is optimal for \eqref{pd}, hence stop. \\
\spc {\bf Case 3:} $J(y) \geq J(x)$ and $|\mathcal A| + |\mathcal C| > 1$ \\
\spc \spc  Choose $\mathcal A_0 \subseteq \mathcal A$, $\mathcal C_0 \subseteq
\mathcal C$ with $\mathcal A_0 \cup \mathcal C_0 \not =
\emptyset$ such that $x$ is feasible but not \\
\spc \spc optimal for \eqref{pd1}, for details see \cite{HungerlaenderRendl15}.\\
\spc \spc Let $(\mathcal B_{0},\mathcal D_0)$ be the optimal pair for \eqref{pd1}.\\
\spc \spc $\mathcal A \leftarrow \mathcal A_0 \cup \mathcal B_{0},\ \mathcal C \leftarrow \mathcal C_0 \cup \mathcal D_0$.\\
\spc $[\alpha,\gamma]= $KKT$(\mathcal A,\mathcal C)$\\
{\bf endwhile}
\vskip 2mm
\hrule
\caption{Algorithmic description of the feasible active set method
  from \cite{HungerlaenderRendl15}.}
\label{modified-alg_ts}
\end{table}

\section{Numerical tests}\label{sec:num}

We performed test computations in a Matlab implementation for the three examples from Section \ref{sec:appex} in $\tilde{\Omega}=\Omega=(-1,1)^2\subseteq\R^2$ using just one observation $I=1$ (thus skipping subscrips $i$ in the following) and a the piecewise constant function
\[
\mbox{test 1: }
{b_f}_{ex}(x,y)=c_{ex}(x,y)=a_{ex}(x,y)=1+10\cdot {1\!\!{\rm I}}_{B}(x,y)\,,
\]
where $B=\{(x,y)\in\R^2\, : \, (x+0.4)^2+(y+0.3)^2\leq0.04\}$
cf. Figure \ref{fig:exsol_spots}, in all these examples and
correspondingly setting $\ul{b}=\ul{c}=\ul{a}=1$,
$\ol{b}=\ol{c}=\ol{a}=11$. The boundary conditions were chosen as
$j=0$ on $\Gamma=(-1,1)\times\{-1,1\}$.

\begin{figure}[p]
\includegraphics[width=0.48\textwidth]{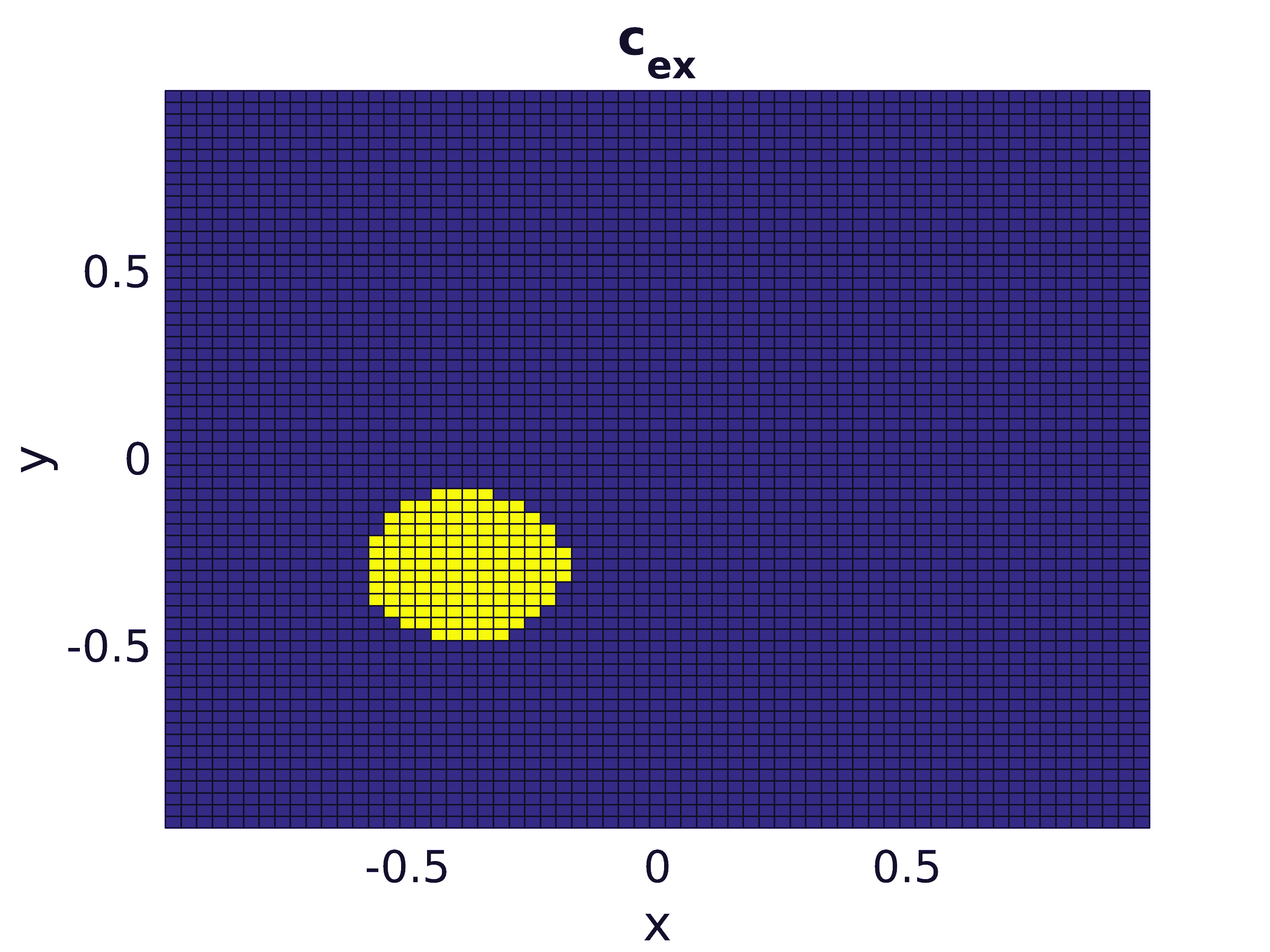}
\hspace*{0.01\textwidth}
\includegraphics[width=0.48\textwidth]{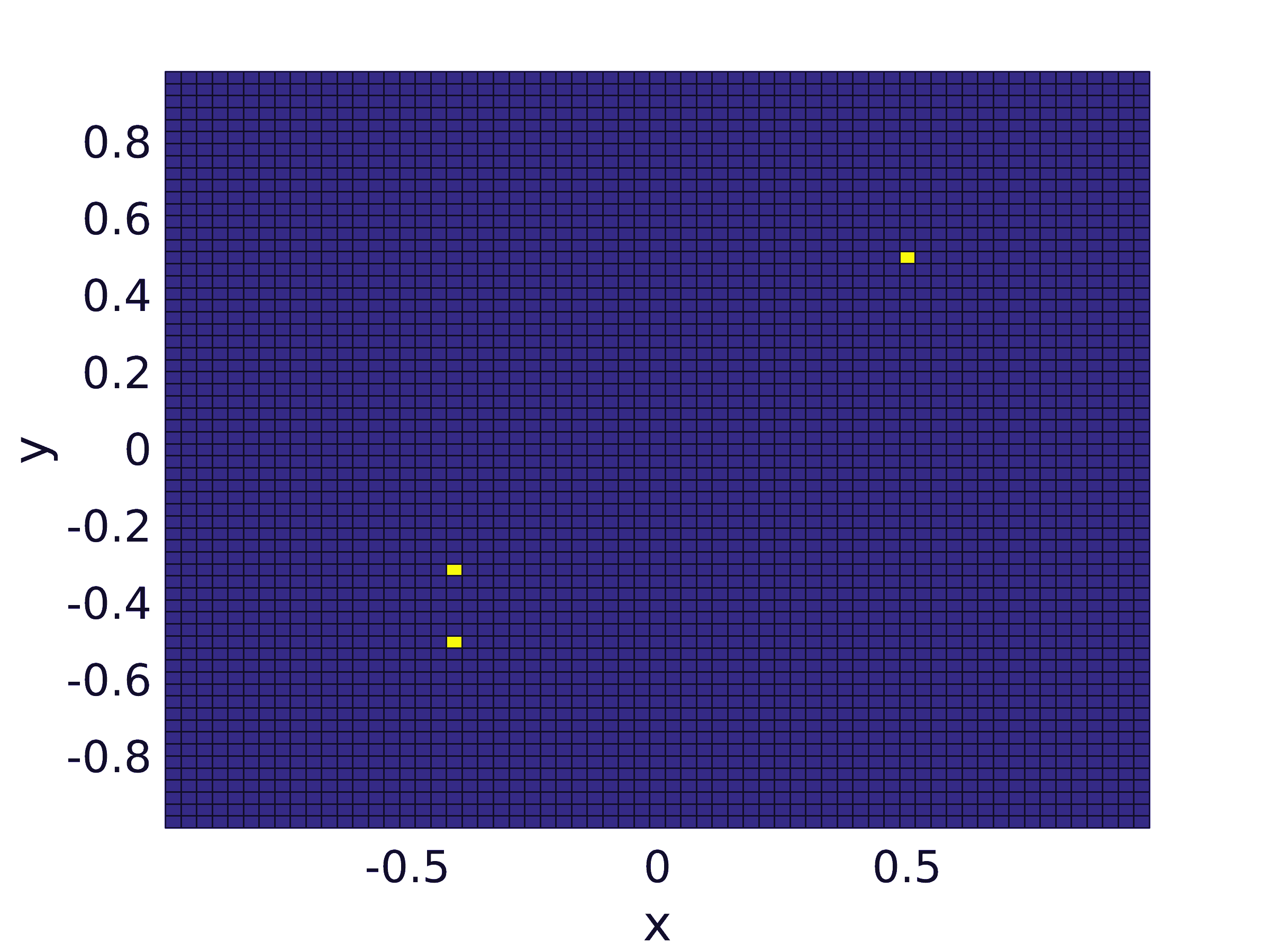}
\caption{
Left: exact coefficient ${b_f}_{ex}=c_{ex}=a_{ex}$.
Right: locations of spots for testing weak * $L^\infty$ convergence.
\label{fig:exsol_spots}}
\end{figure}

The finite element grid used in computations was determined by subdividing the unit interval into $N=32$ subintervals in both directions, leading to $(N+1)^2=1089$ gridpoints for the piecewise linear nodal discretization of $u$ and $b_f$, as well as $2*N^2=2048$ triangles for the piecewise constant element discretization $c$ and $a$. Thus in the example from Section \ref{sec_bprob} we end up with $n=2178$, in the two other examples with $n=3137$ unknowns.
Part of the experiments were also carried out on a finer grid with $N=64$ and correspondingly $n=8450$ or $n=12417$ unknowns.
In order to avoid an inverse crime, we generated the synthetic data on a finer grid for the example from Section \ref{sec_bprob}, while in the examples from Sections \ref{sec_cprob}, \ref{sec_aprob}, we additionally prescribed the exact state as $\phi_{ex}(x,y)=\cos(\frac{\pi}{2}x)\sin(\frac{\pi}{2}y)$, and computed the corresponding right hand side $f$ on the finer grid.
After projection of $\phi_{ex}$ onto the computational grid, we added uniformly distributed random noise of levels $\delta\in\{0.001, 0.01, 0.1\}$ corresponding to $0.1$, $1$ and $10$ per cent noise, to obtain synthetic data $\ydel$.
In all tests we started with the constant function with value $5.5$ for ${b_f}_0,c_0,a_0$ (i.e., the mean value between upper and lower bound) and $\phi_0\equiv\ydel$. Moreover, we always set $\tau=1.1$.

First of all, we provide a comparison of the methods described in
Section \ref{sec:QPbox}, in their Matlab implementations
\verb+Feas_AS+ \cite{HungerlaenderRendl15} and
\verb+Infeas_AS+ \cite{HungerlaenderRendl17} with \verb+quadprog+
which is the standard Matlab function for solving \eqref{pd}. The trust-region-reflective
algorithm \verb+quadprog+ is a subspace trust-region method based on the
interior-reflective Newton method from \cite{ColemanLi}.
To facilitate transparency of our numerical tests we provide the
Matlab code for generating
  and solving all instances with the various methods discussed in this
  paper under \url{http://philipphungerlaender.com/qp-code/}.

Table \ref{tab_comparison} shows the CPU times measured with the Matlab function \verb+cputime+, the $L^1$ errors as well as the errors in certain spots within the two homogeneous regions and on their interface,
\[
\mbox{spot}_1=(0.5,0.5)\,, \quad
\mbox{spot}_2=(-0.4,-0.3)\,, \quad
\mbox{spot}_3=(-0.4,-0.5)\,,
\]
cf. Figure \ref{fig:exsol_spots}, more precisely, on $\frac{1}{N}\times\frac{1}{N}$ squares located at these spots, corresponding to the piecewise constant $L^1$ functions with these supports in order to exemplarily test weak * convergence. Moreover, we provide the number $k$ of Gauss Newton SQP iterations (which is always one in the linear inverse source problem, of course) and the relative residual, i.e., the cost function ratio at the final iterate $\frac{J(x_k^\delta,u_k^\delta)}{J(x_0,u_0)}$.
We do so for the three examples from Sections \ref{sec_bprob}, \ref{sec_cprob}, and \ref{sec_aprob}, shortly referred to as  ``source'', ``potential'', and ``diffusion'', respectively, using the lowest noise level $\delta=0.001$ and, besides the discretization with $N=32$, also one with $N=64$ subintervals in both directions.
\begin{table}[p]
  \begin{center}
    \small
    \hspace*{-0.8cm}
\begin{tabular}
{|l||l|l|l||l|l|l|}
\hline
&\multicolumn{6}{|c|}{source}
\\ \hline
&\multicolumn{3}{|c||}{N=32 (n=2178)}
&\multicolumn{3}{|c|}{N=64 (n=8450)}
\\ \hline
&\verb+quadprog+&\verb+Infeas_AS+&\verb+Feas_AS+
&\verb+quadprog+&\verb+Infeas_AS+&\verb+Feas_AS+
\\ \hline
$k$&
1&1&1&1&1&1
\\ \hline
$\frac{J(x_k^\delta,u_k^\delta)}{J(x_0,u_0)}$&
6.6901e-06&6.6154e-06&6.6154e-06&
3.2887e-05&3.2532e-05&3.2744e-05
\\ \hline
$\mbox{err}_{spot_1}$&
2.4023e-06&0&0&
6.7780e-07&0&0
\\ \hline
$\mbox{err}_{spot_2}$&
7.3665e-06&0&0&
0.0462&0&0
\\ \hline
$\mbox{err}_{spot_3}$&
1.0890e-05&0&0&
1.3992&1.3917&1.3917
\\ \hline
$\mbox{err}_{L^1(\Omega)}$&
0.0462&0.0465&0.0465&
0.0832&0.0834&0.0834
\\ \hline
CPU&
1.68&1.16&1.09&
29.62&112.97&22.85
\\ \hline
\hline
&\multicolumn{6}{|c|}{potential}
\\ \hline
&\multicolumn{3}{|c||}{N=32 (n=3137)}
&\multicolumn{3}{|c|}{N=64 (n=12417)}
\\ \hline
&\verb+quadprog+&\verb+Infeas_AS+&\verb+Feas_AS+
&\verb+quadprog+&\verb+Infeas_AS+&\verb+Feas_AS+
\\ \hline
$k$&
4&4&6&
3&3&3
\\ \hline
$\frac{J(x_k^\delta,u_k^\delta)}{J(x_0,u_0)}$&
6.2155e-06&6.0569e-06&8.0316e-07&
1.6318e-04&1.6286e-04&1.6286e-04
\\ \hline
$\mbox{err}_{spot_1}$&
6.9611e-10&0&0&
1.2184e-10&0&0
\\ \hline
$\mbox{err}_{spot_2}$&
1.7502e-06&0&0&
0.7183&0.7145&0.7145
\\ \hline
$\mbox{err}_{spot_3}$&
1.5392&1.6093&1.4363&
3.2625&3.2629&3.2629
\\ \hline
$\mbox{err}_{L^1(\Omega)}$&
0.1051&0.1044&0.0938&
0.1460&0.1444&0.1444
\\ \hline
CPU&
21.88&24.44&18.34&
432.06&368.67&276.17
\\ \hline
\hline
&\multicolumn{6}{|c|}{diffusion}
\\ \hline
&\multicolumn{3}{|c||}{N=32 (n=3137)}
&\multicolumn{3}{|c|}{N=64 (n=12417)}
\\ \hline
&\verb+quadprog+&\verb+Infeas_AS+&\verb+Feas_AS+
&\verb+quadprog+&\verb+Infeas_AS+&\verb+Feas_AS+
\\ \hline
$k$&
4&4&4&
8&8&8
\\ \hline
$\frac{J(x_k^\delta,u_k^\delta)}{J(x_0,u_0)}$&
0.0931&0.0931&0.0931&
0.0543&0.0543&0.0543
\\ \hline
$\mbox{err}_{spot_1}$&
3.7303e-14&0&0&
3.6415e-14&0&0
\\ \hline
$\mbox{err}_{spot_2}$&
4.4418&4.4418&4.4418&
7.8278e-05&0&0
\\ \hline
$\mbox{err}_{spot_3}$&
0.3200&0.3199&0.3199&
4.3077e-13&0&0
\\ \hline
$\mbox{err}_{L^1(\Omega)}$&
0.3259&0.3259&0.3259&
0.3799&0.3799&0.3799
\\ \hline
CPU&
32.01&6.30&5.25&
1193.00&532.57&463.89
\\ \hline
\end{tabular}
\\[2ex]
\caption{Comparison of different solvers for QPs with box constraints.
  \label{tab_comparison}}
\end{center}
\end{table}
We observe that \verb+Infeas_AS+ and \verb+Feas_AS+ obviously reach the same solution, which is almost bang-bang, as the vanishing errors in almost all spots show, whereas \verb+quadprog+, being an interior point method, exhibits small deviations from the bounds.
In all cases \verb+Feas_AS+ outperforms the other two methods as far as CPU times are concerned. Therefore the following computations were done using \verb+Feas_AS+.

To provide an illustration of convergence as the noise level tends to
zero, we performed five runs on each noise level for each example and
list the average errors in Table \ref{tab_deltas}. In the diffusion
example, where we need to regularize, we set
$\alpha=1.e-4\cdot\delta$.
Correspondingly we provide an illustration of the convergence history
for the inverse potential problem from Section \ref{sec_cprob} for two
different noise levels $\delta=0.1$ and $\delta=0.01$ in Figures
\ref{fig:convdel01}, \ref{fig:convdel001}.

\begin{table}[p]
  \begin{center}
    \small
    \hspace*{-0.8cm}
\begin{tabular}
{|l||l|l|l||l|l|l||l|l|l|}
\hline
&\multicolumn{3}{|c|| }{source}
&\multicolumn{3}{|c||}{potential}
&\multicolumn{3}{|c|}{diffusion}
\\ \hline
$\delta$&0.001&0.01&0.1&0.001&0.01&0.1&0.001&0.01&0.1
\\ \hline
$\mbox{err}_{spot_1}$&
         0&
    0.2000&
    0.2000&
     0&
     0&
     0&
     0&
     0&
     0
\\ \hline
$\mbox{err}_{spot_2}$&
         0&
    2.7488&
    4.0702&
         0&
    0.7960&
    4.8689&
         0&
    8.4141&
    9.8436
\\ \hline
$\mbox{err}_{spot_3}$&
         0&
    0.5678&
    1.9445&
    1.0840&
    2.1512&
    2.5862&
    0.6572&
         0&
         0
\\ \hline
$\mbox{err}_{L^1(\Omega)}$&
    0.0472&
    0.5288&
    0.5721&
    0.1472&
    0.2136&
    0.3671&
    0.7200&
    0.3783&
    0.3745
\\ \hline
\end{tabular}
\\[2ex]
\caption{Convergence as $\delta\to0$: averaged errors of five test runs with uniform noise.
  \label{tab_deltas}}
\end{center}
\end{table}

\begin{figure}[p]
\includegraphics[width=0.31\textwidth]{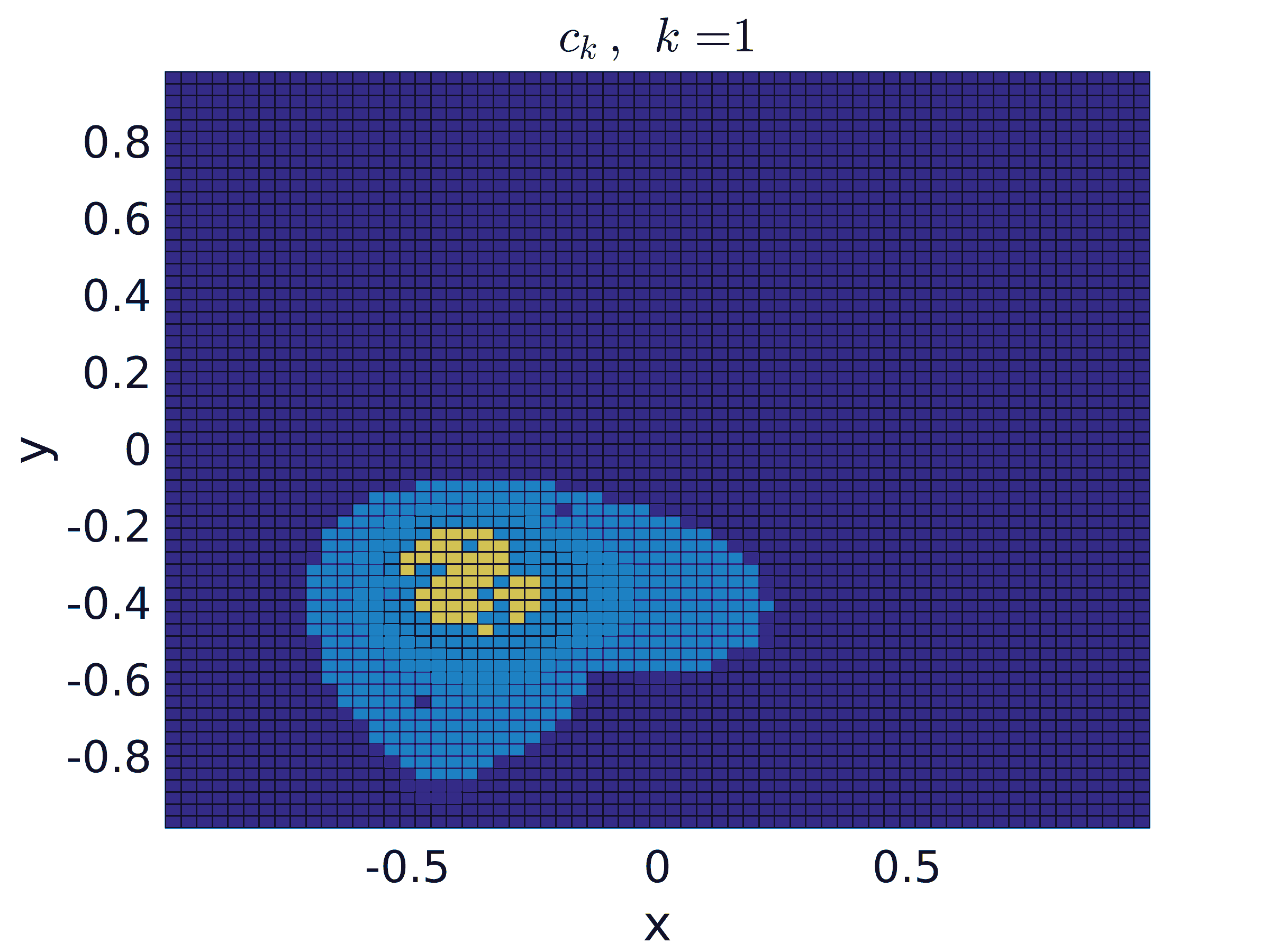}
\hspace*{0.01\textwidth}
\includegraphics[width=0.31\textwidth]{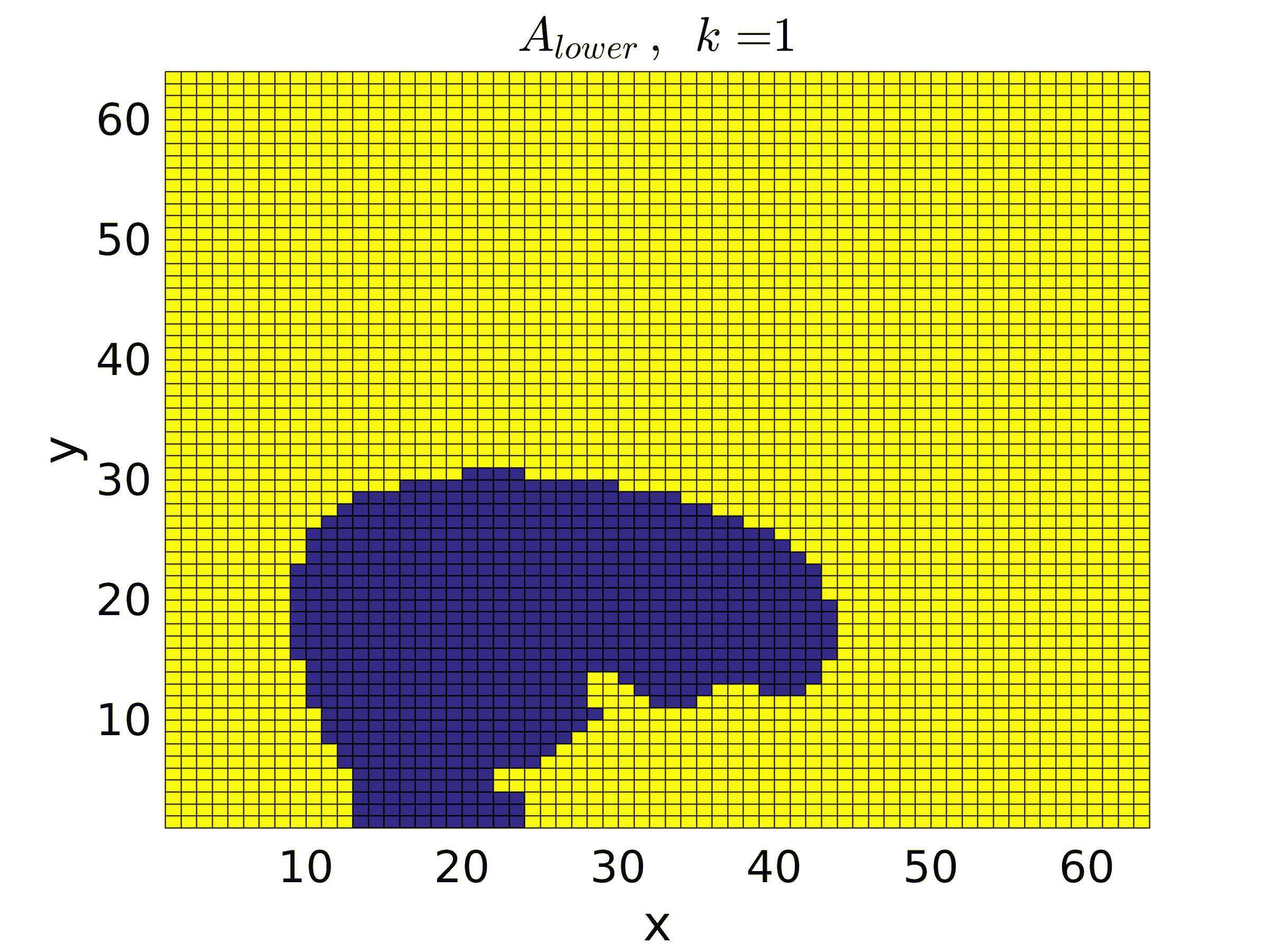}
\hspace*{0.01\textwidth}
\includegraphics[width=0.31\textwidth]{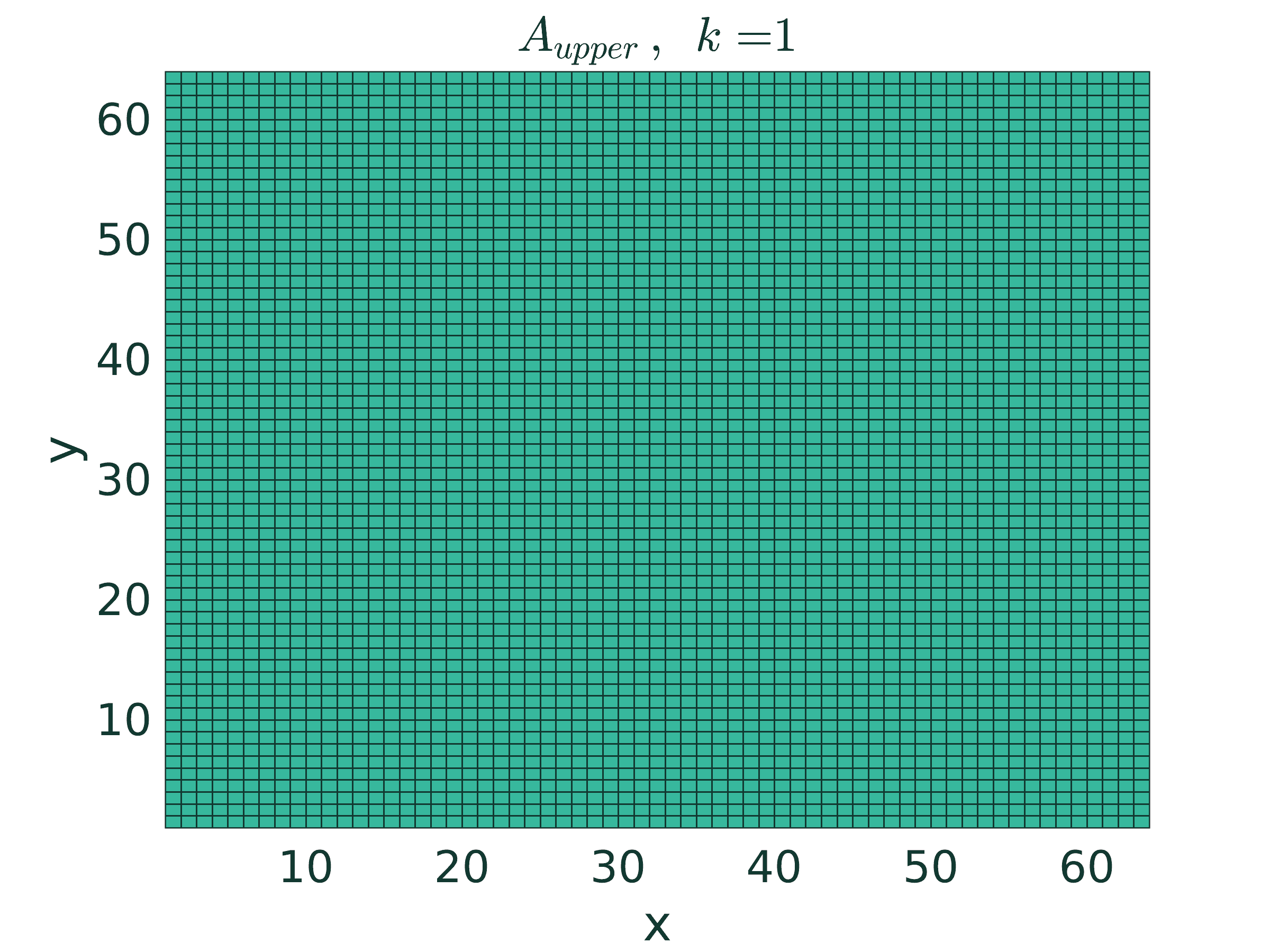}\\
\includegraphics[width=0.31\textwidth]{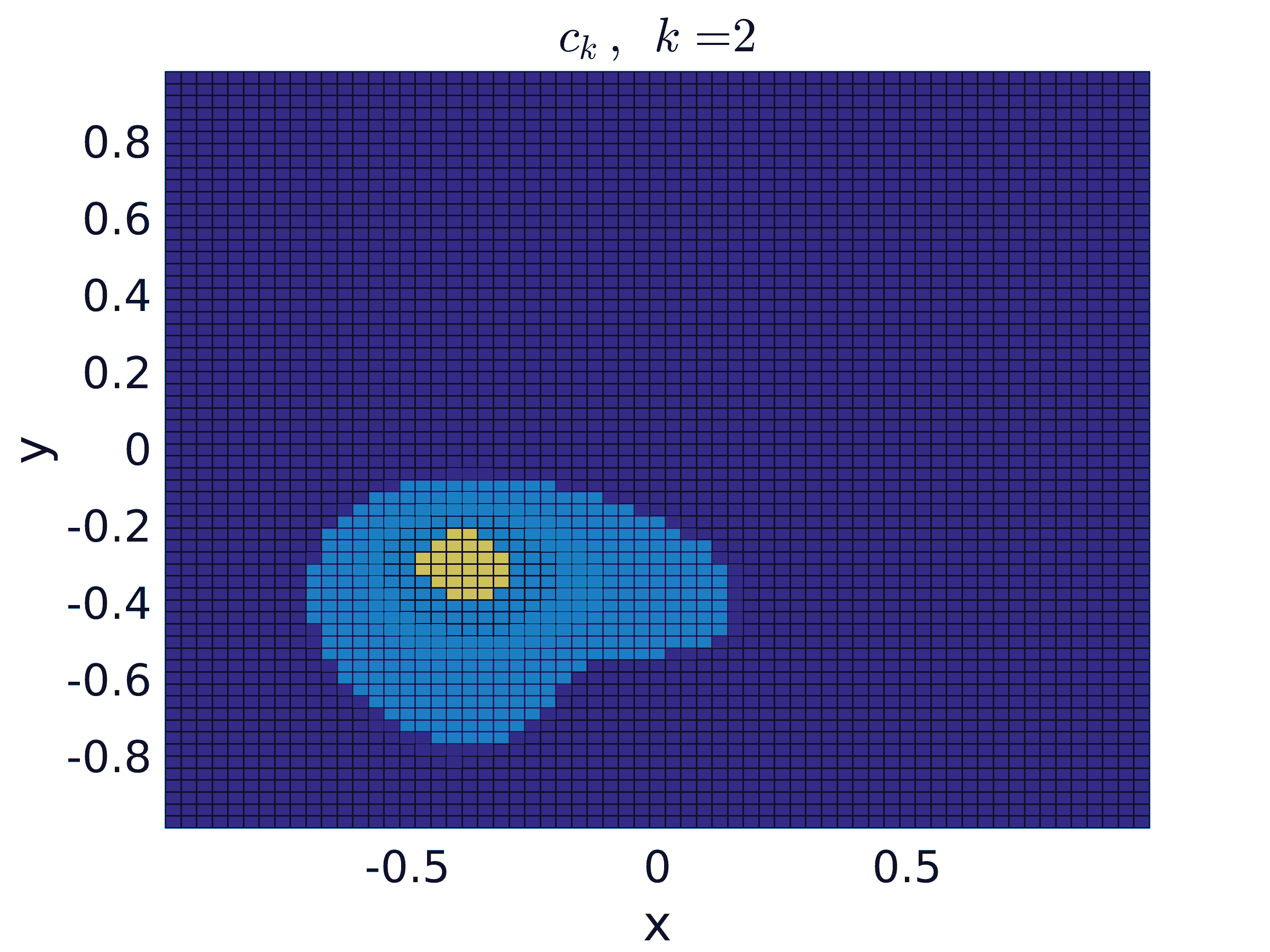}
\hspace*{0.01\textwidth}
\includegraphics[width=0.31\textwidth]{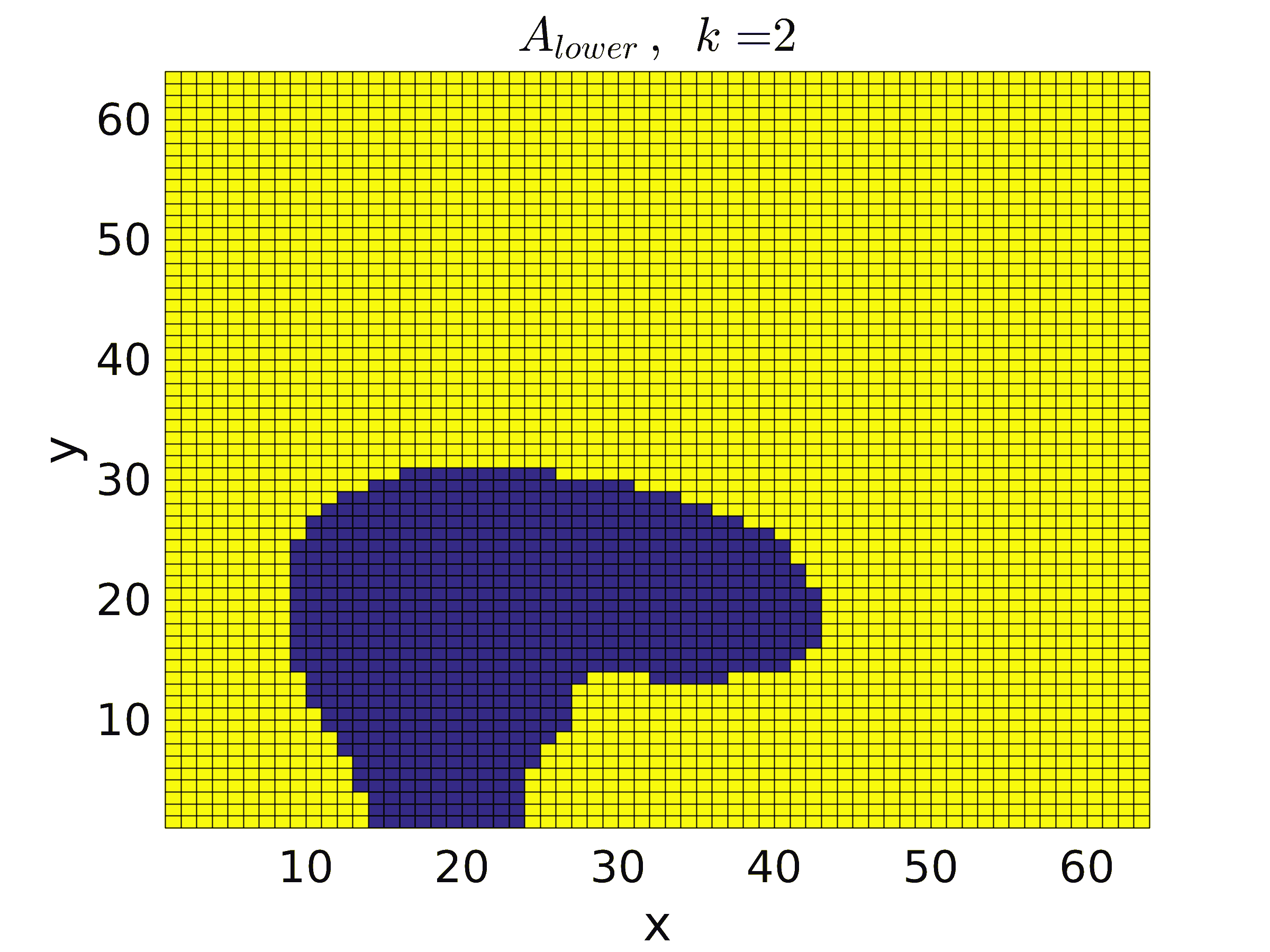}
\hspace*{0.01\textwidth}
\includegraphics[width=0.31\textwidth]{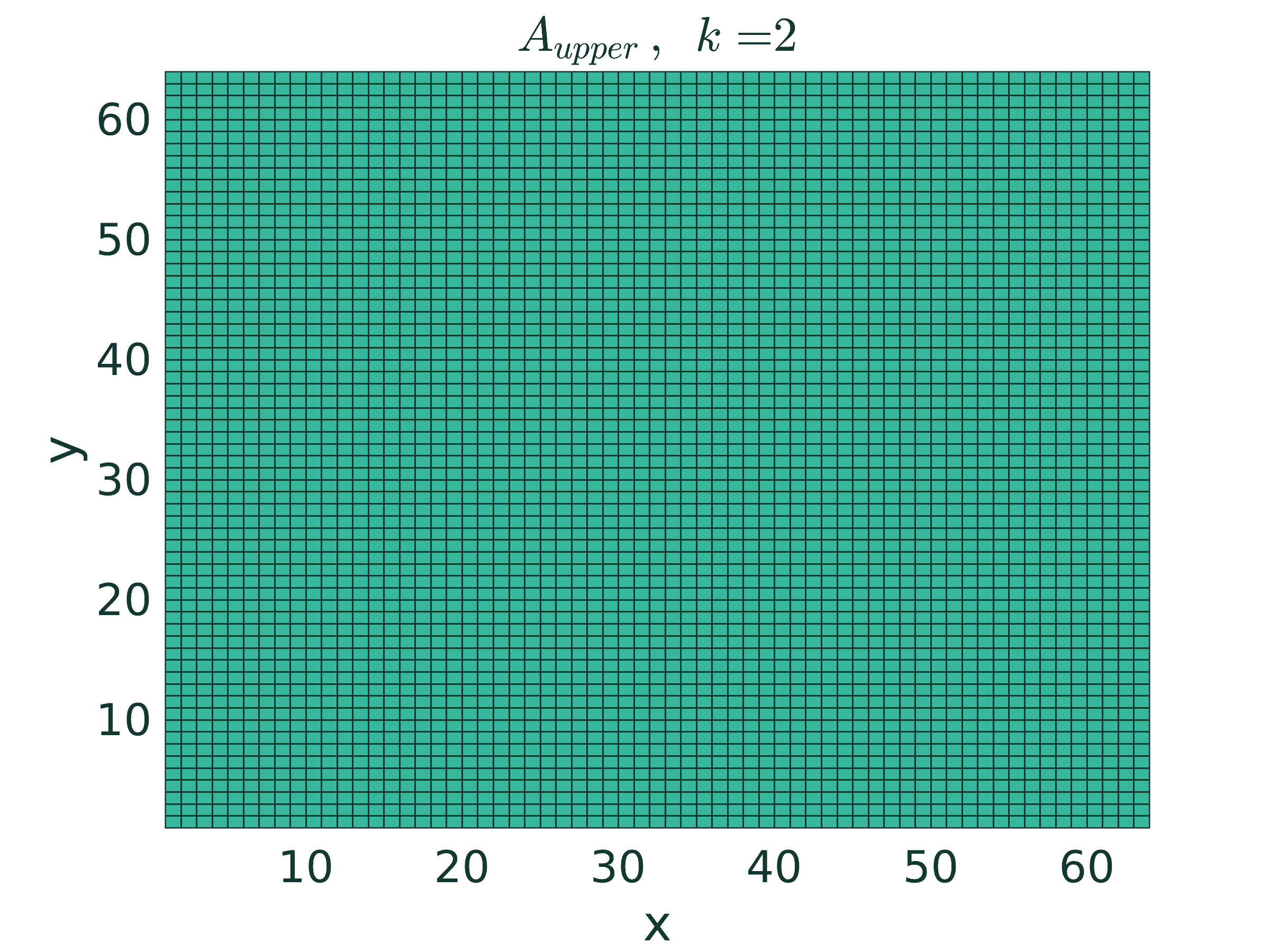}\\
\includegraphics[width=0.31\textwidth]{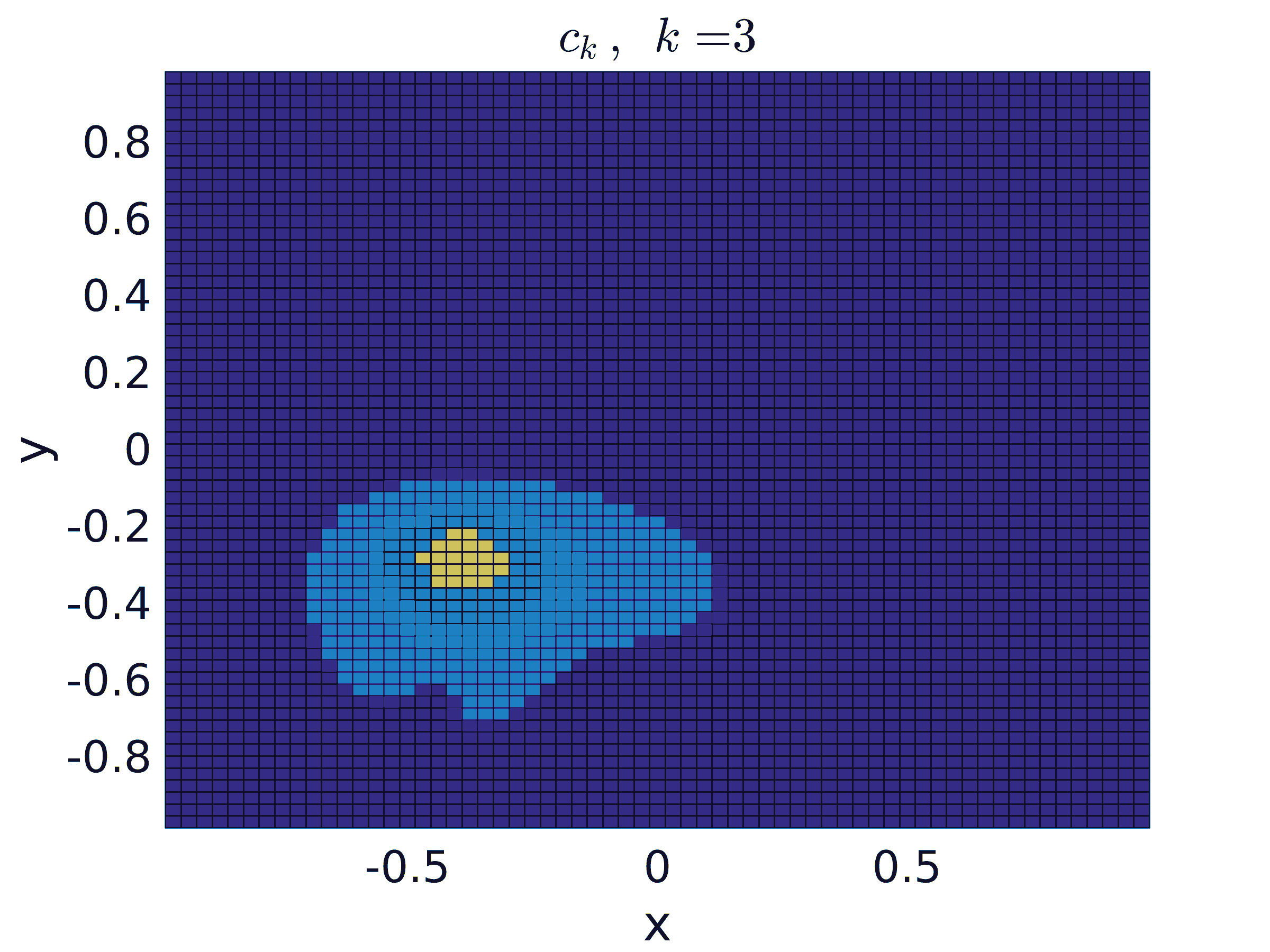}
\hspace*{0.01\textwidth}
\includegraphics[width=0.31\textwidth]{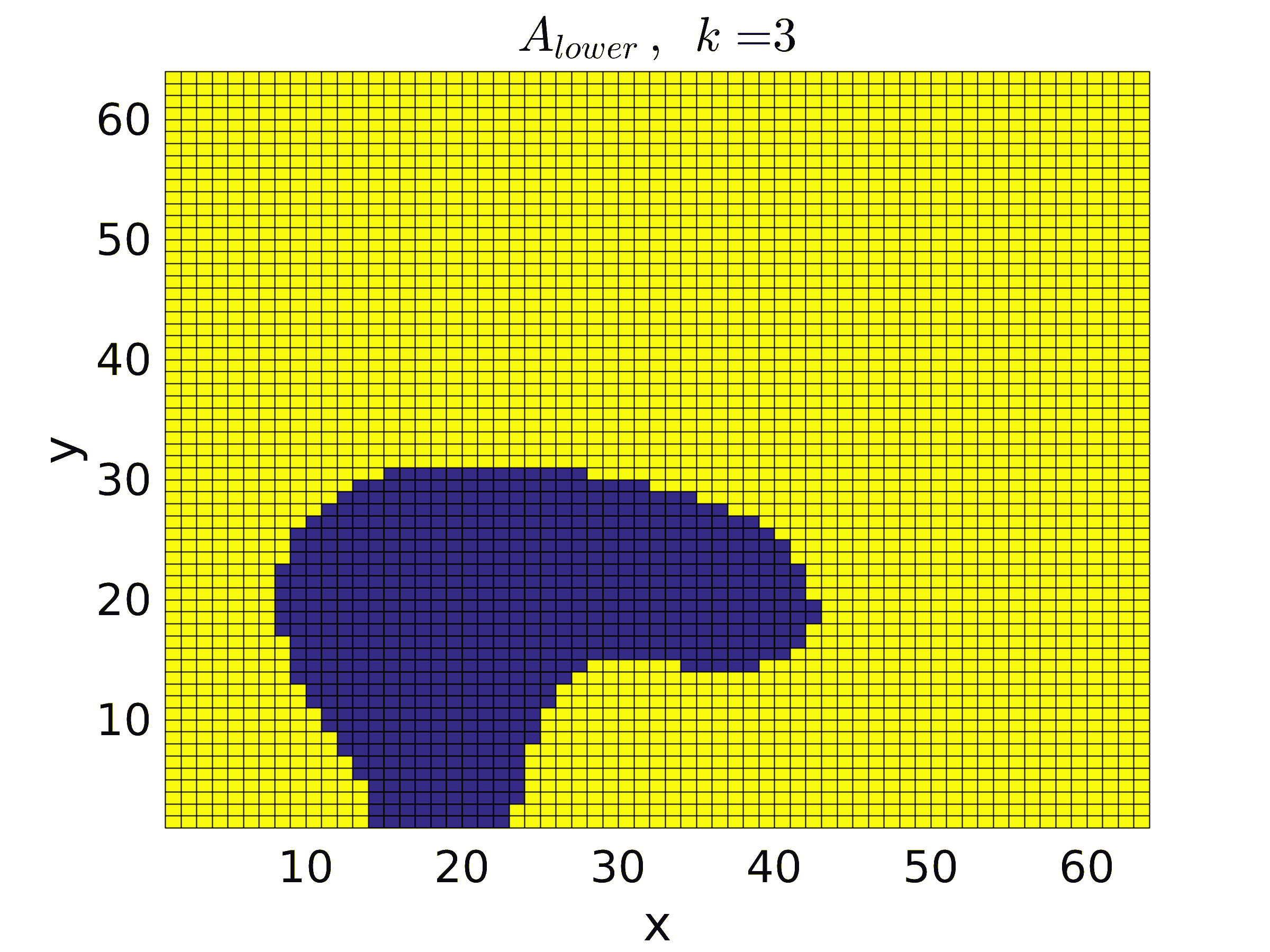}
\hspace*{0.01\textwidth}
\includegraphics[width=0.31\textwidth]{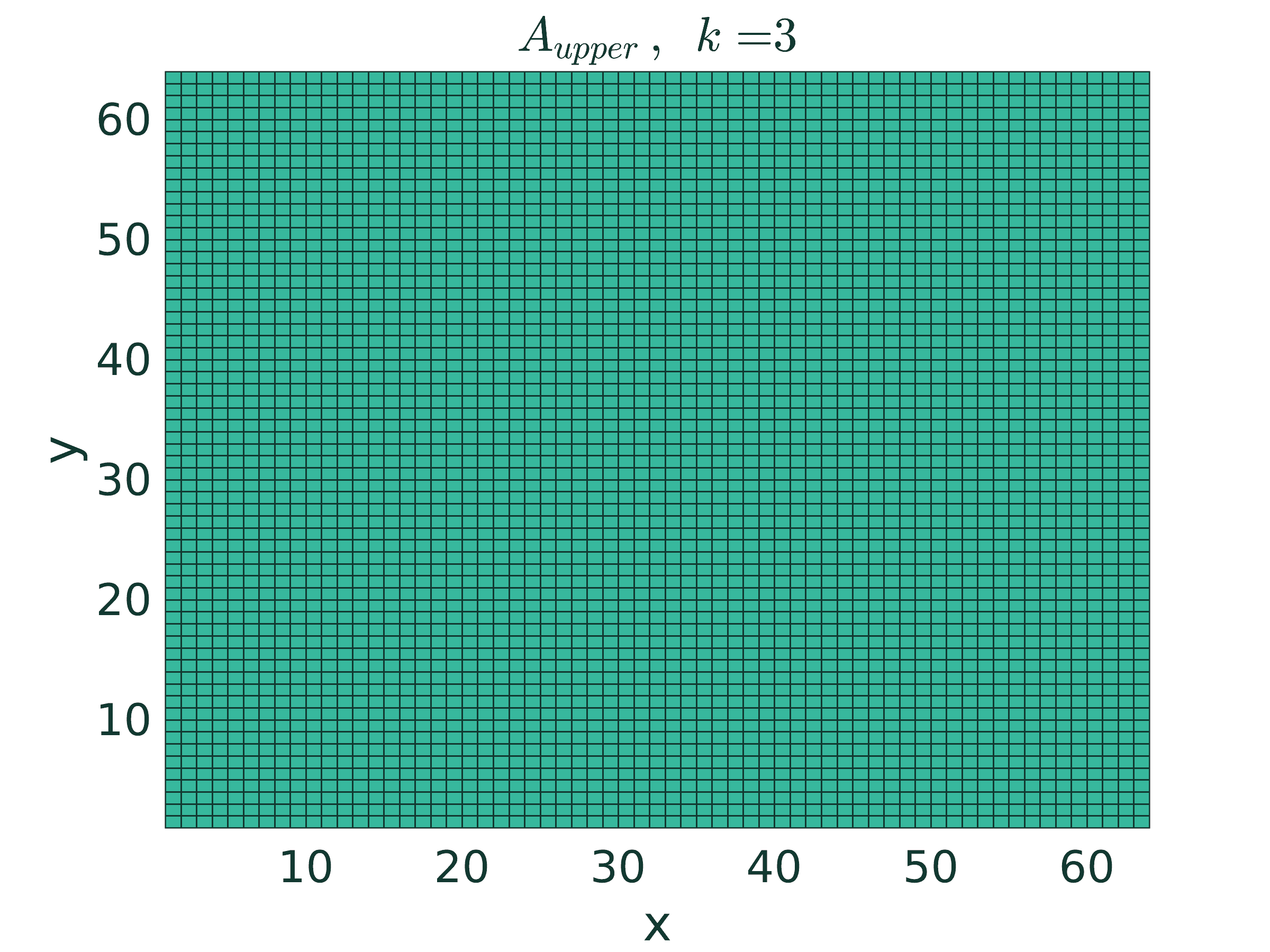}\\
\includegraphics[width=0.31\textwidth]{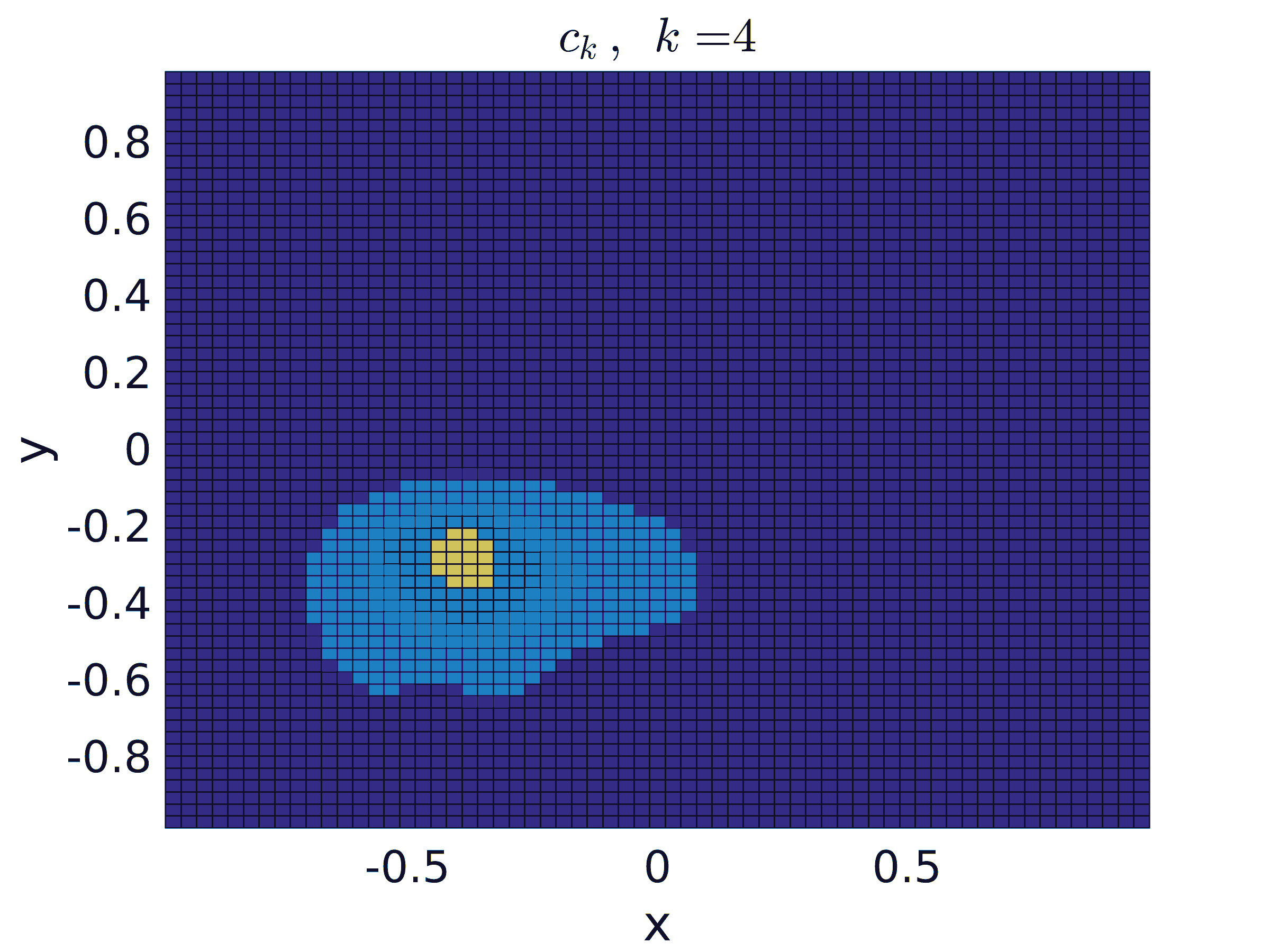}
\hspace*{0.01\textwidth}
\includegraphics[width=0.31\textwidth]{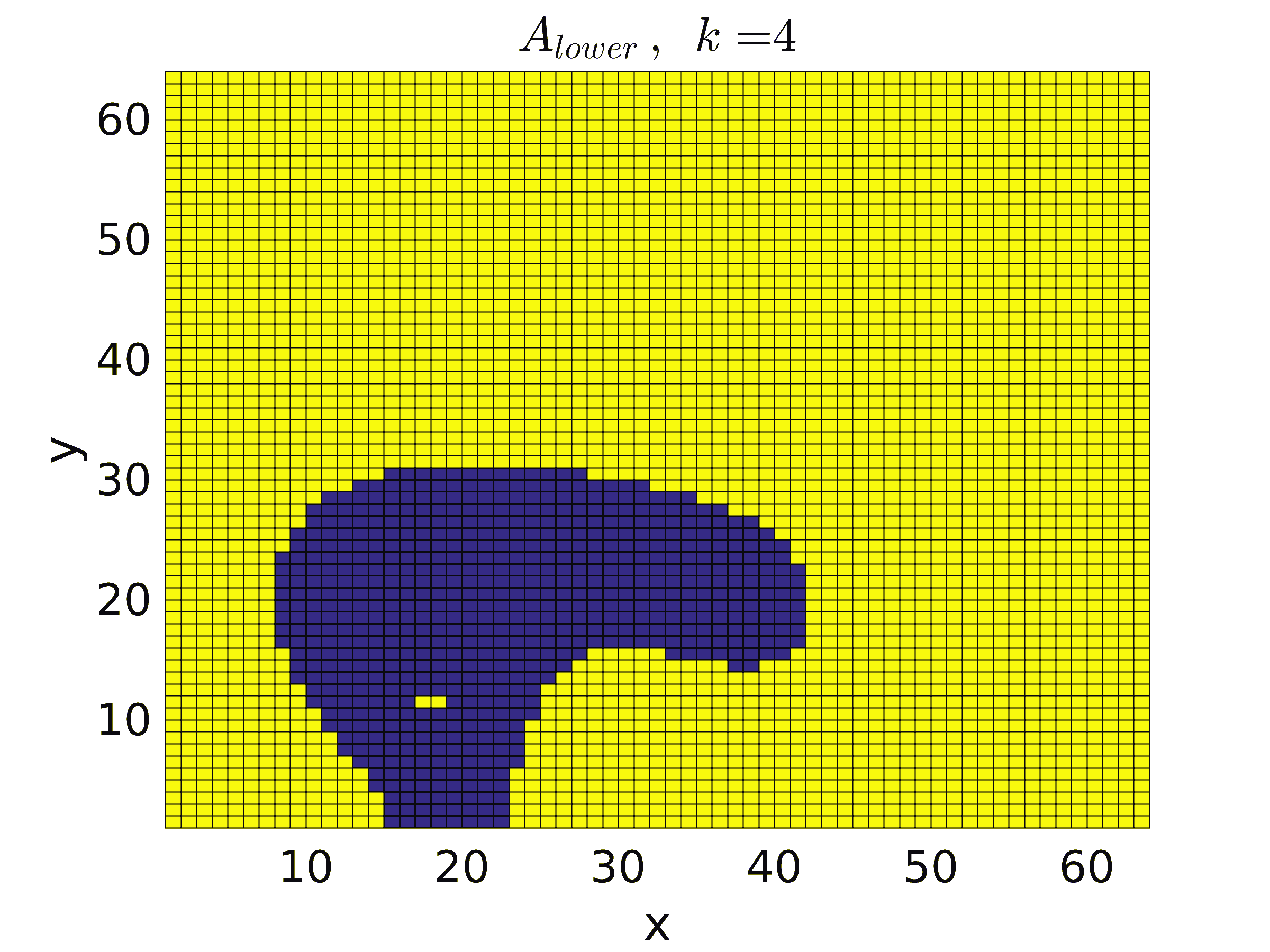}
\hspace*{0.01\textwidth}
\includegraphics[width=0.31\textwidth]{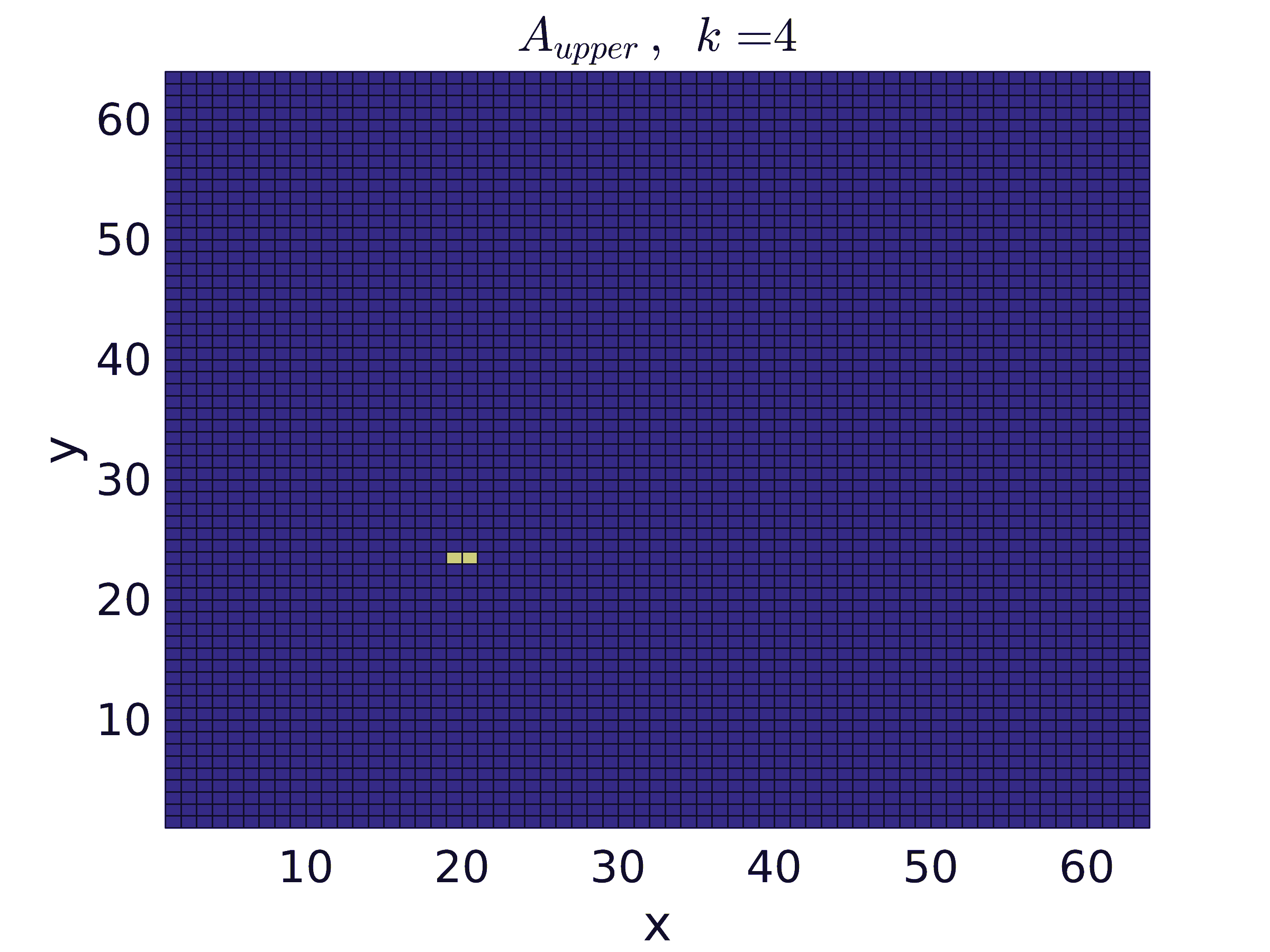}\\
\caption{
Left: reconstructed coefficient $c_k$;
Middle: active set for lower bound;
Right: active set for upper bound;
For $k=1,2,3,4$ (top to bottom) and $\delta=0.1$.
\label{fig:convdel01}}
\end{figure}
\begin{figure}[p]
\includegraphics[width=0.31\textwidth]{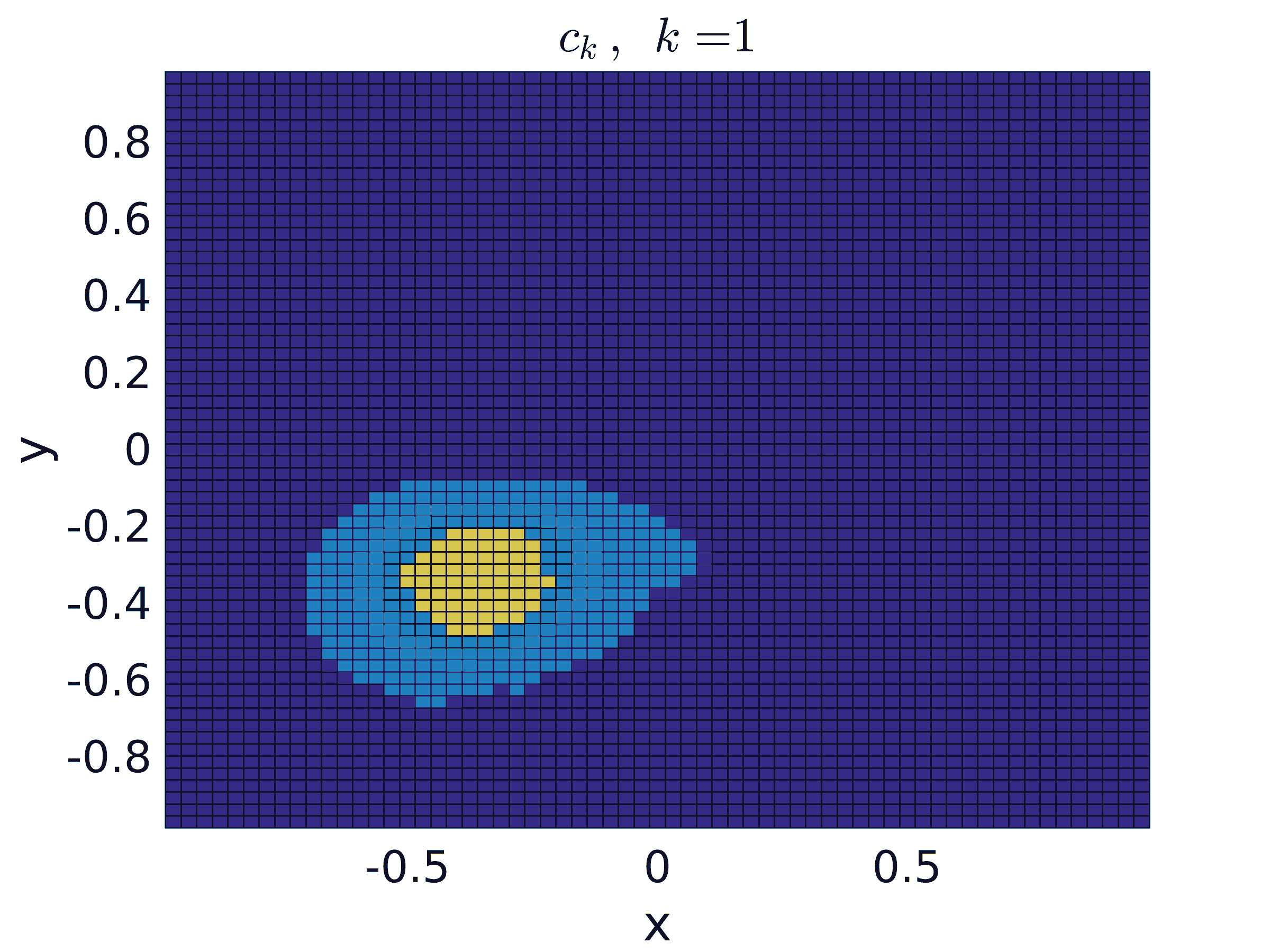}
\hspace*{0.01\textwidth}
\includegraphics[width=0.31\textwidth]{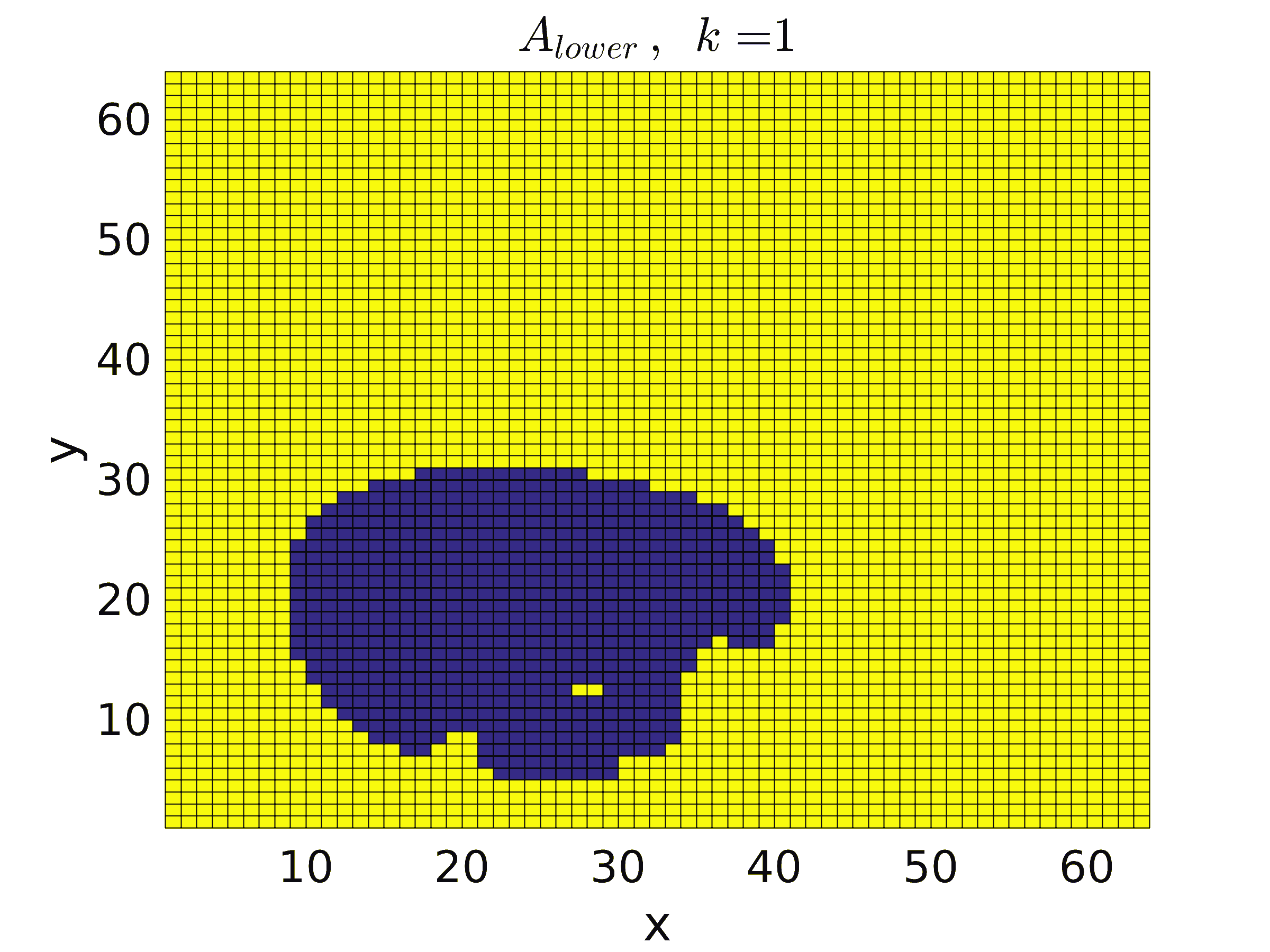}
\hspace*{0.01\textwidth}
\includegraphics[width=0.31\textwidth]{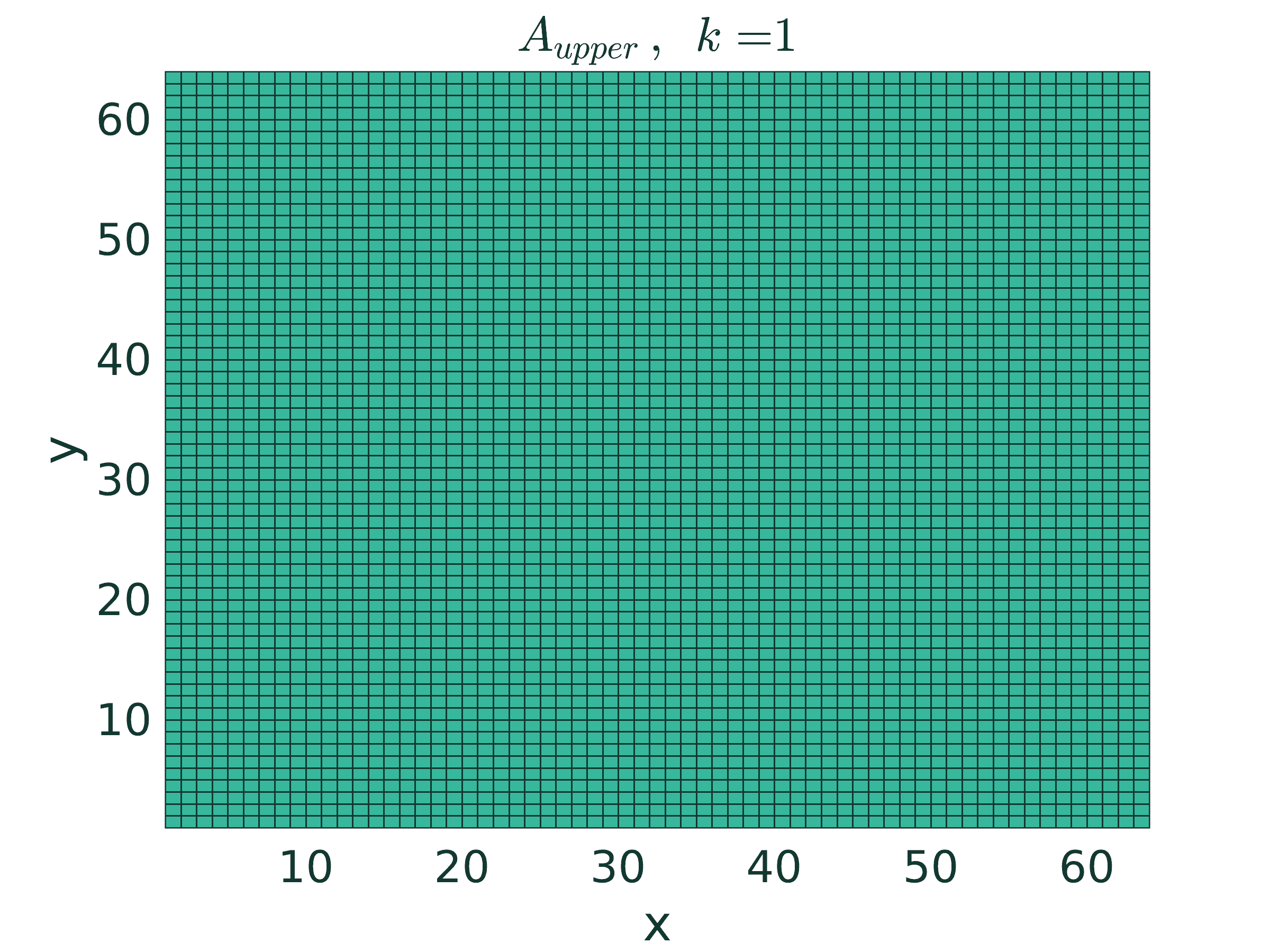}\\
\includegraphics[width=0.31\textwidth]{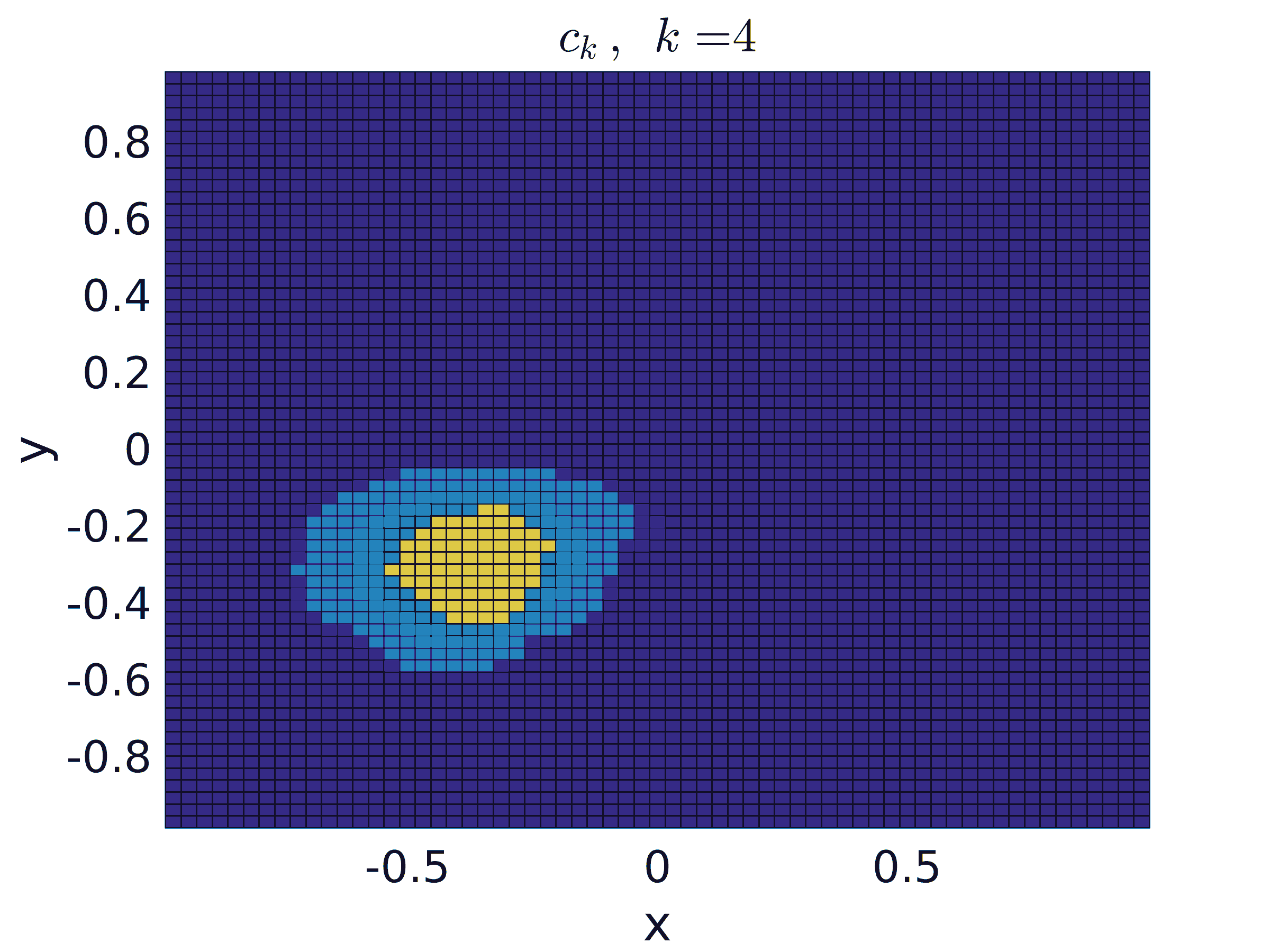}
\hspace*{0.01\textwidth}
\includegraphics[width=0.31\textwidth]{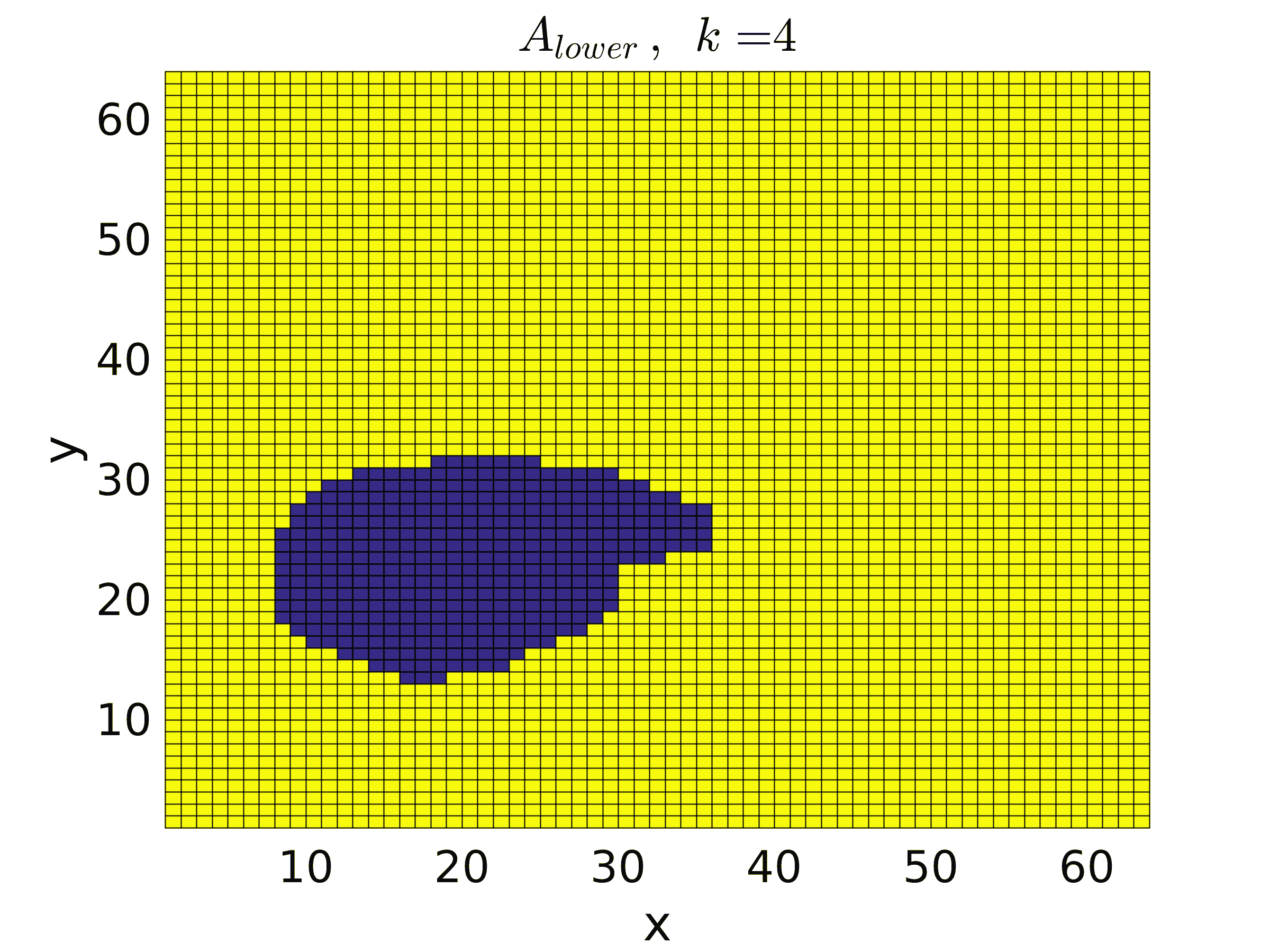}
\hspace*{0.01\textwidth}
\includegraphics[width=0.31\textwidth]{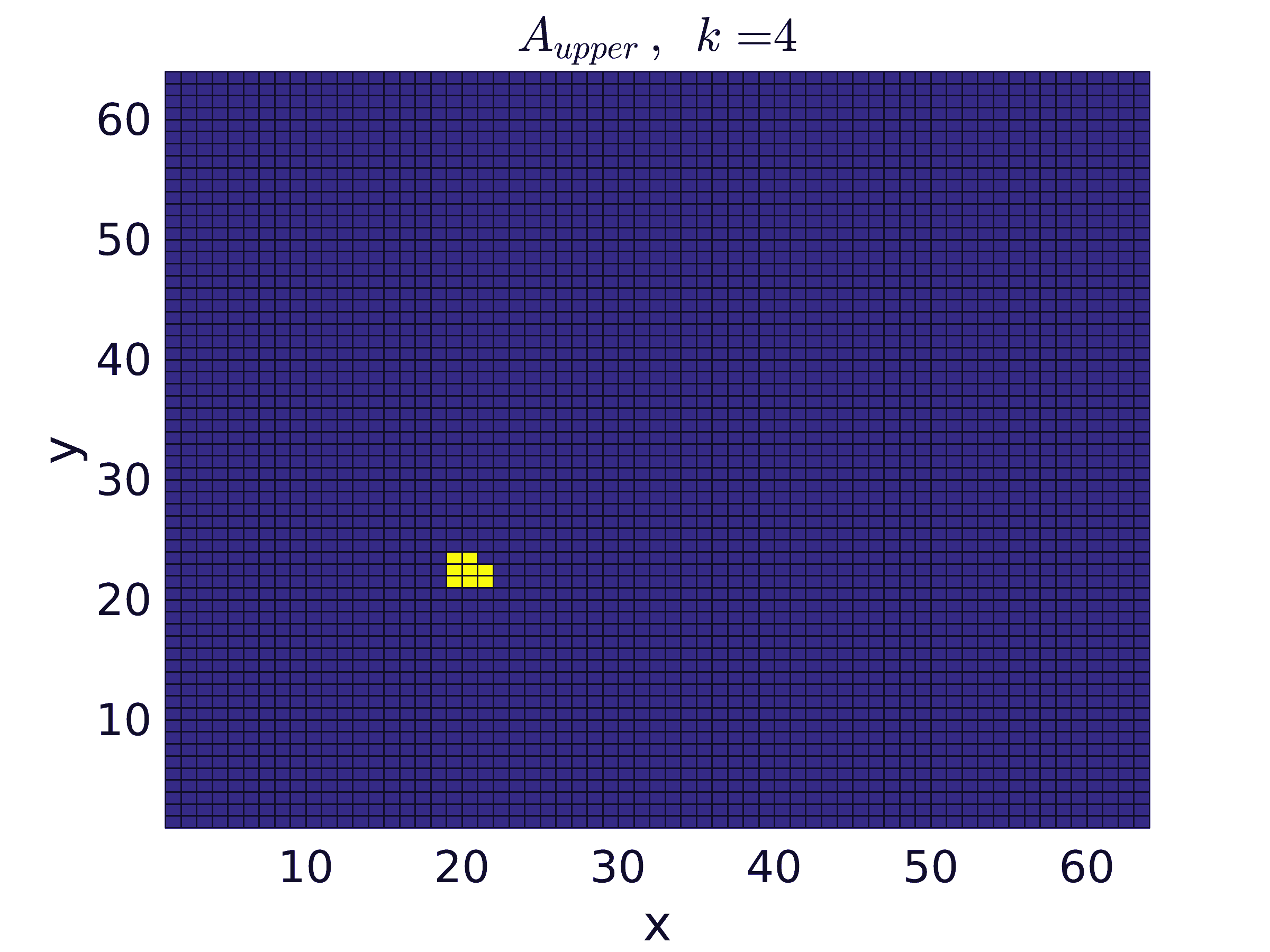}\\
\includegraphics[width=0.31\textwidth]{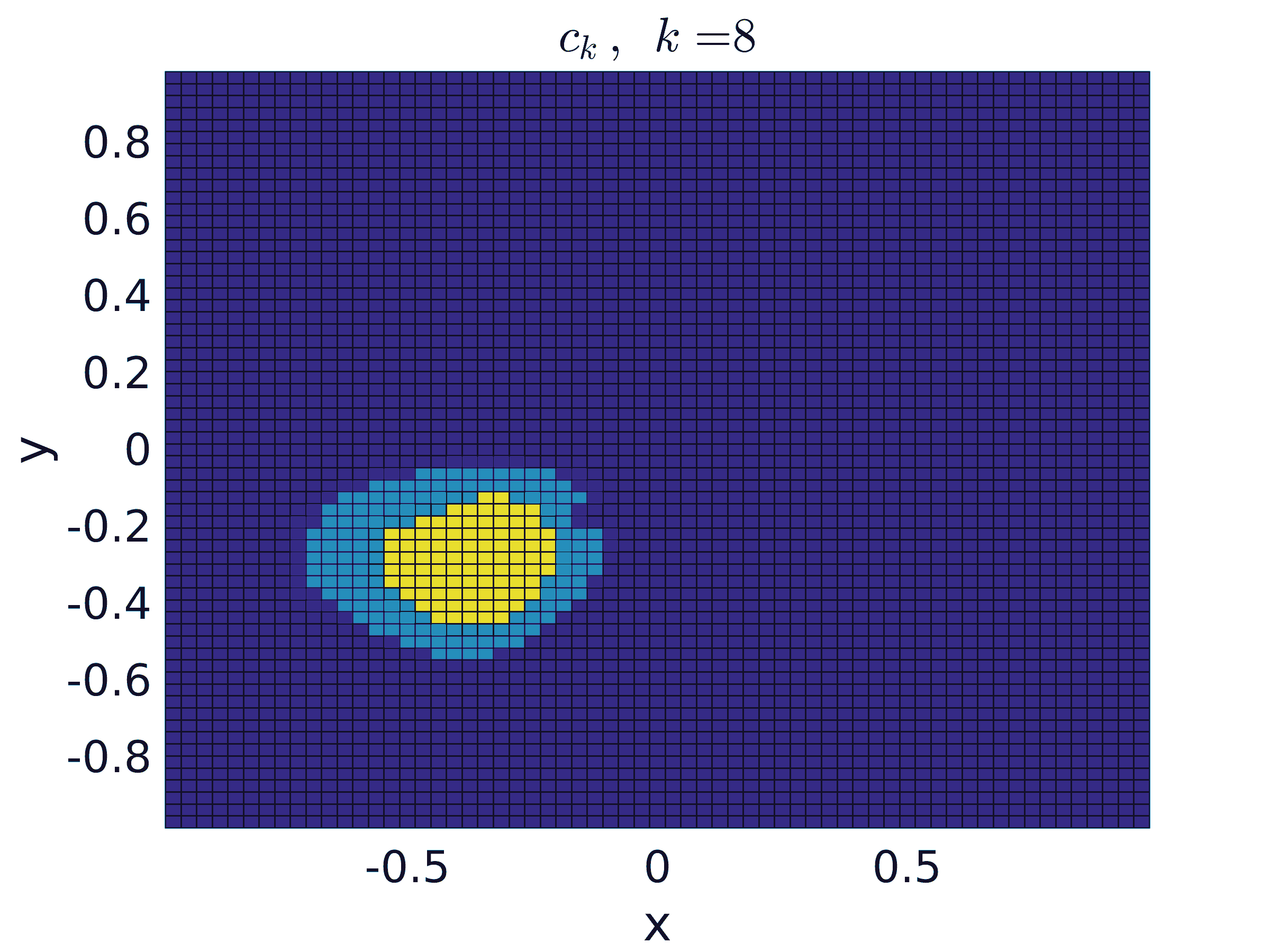}
\hspace*{0.01\textwidth}
\includegraphics[width=0.31\textwidth]{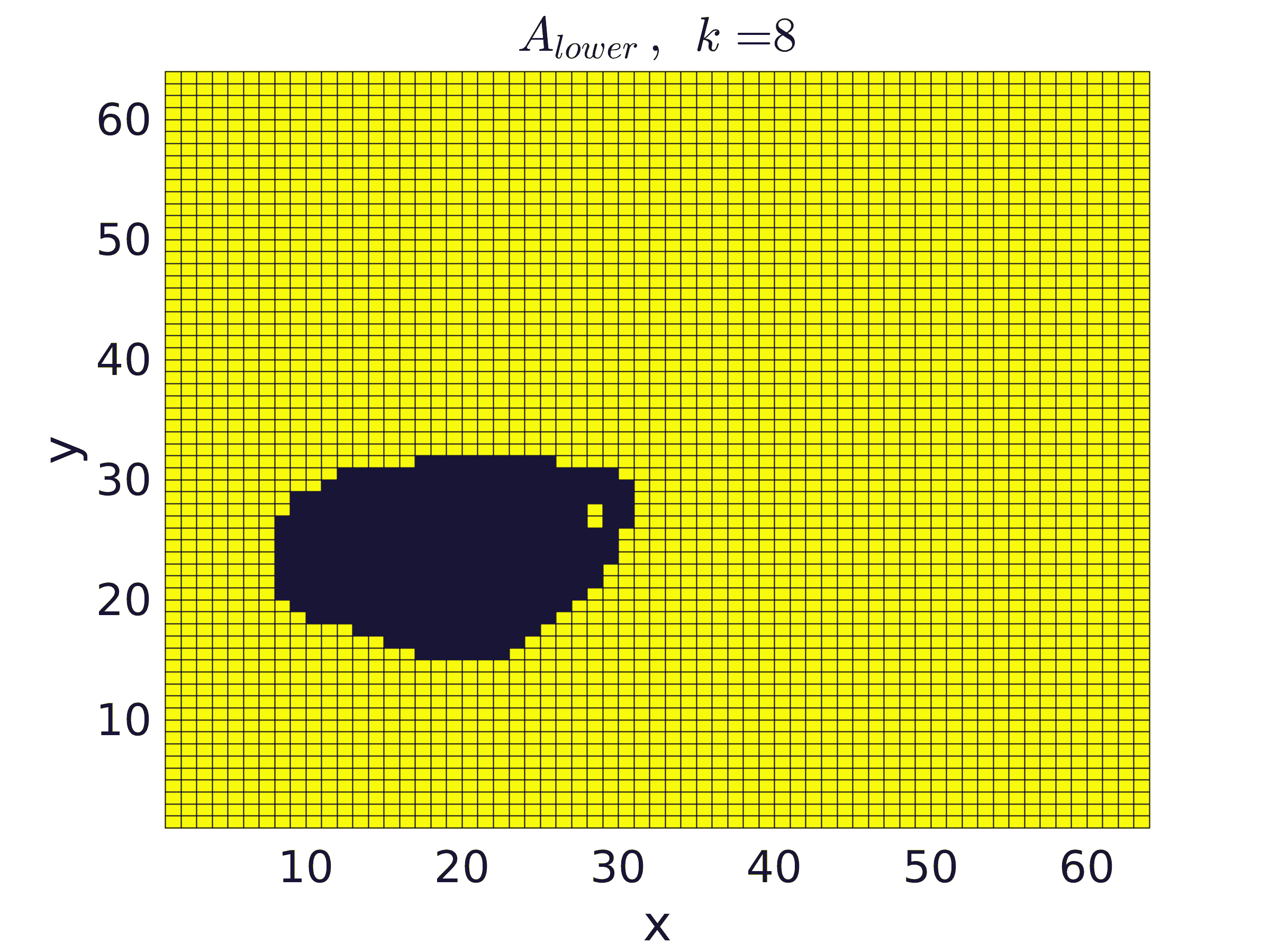}
\hspace*{0.01\textwidth}
\includegraphics[width=0.31\textwidth]{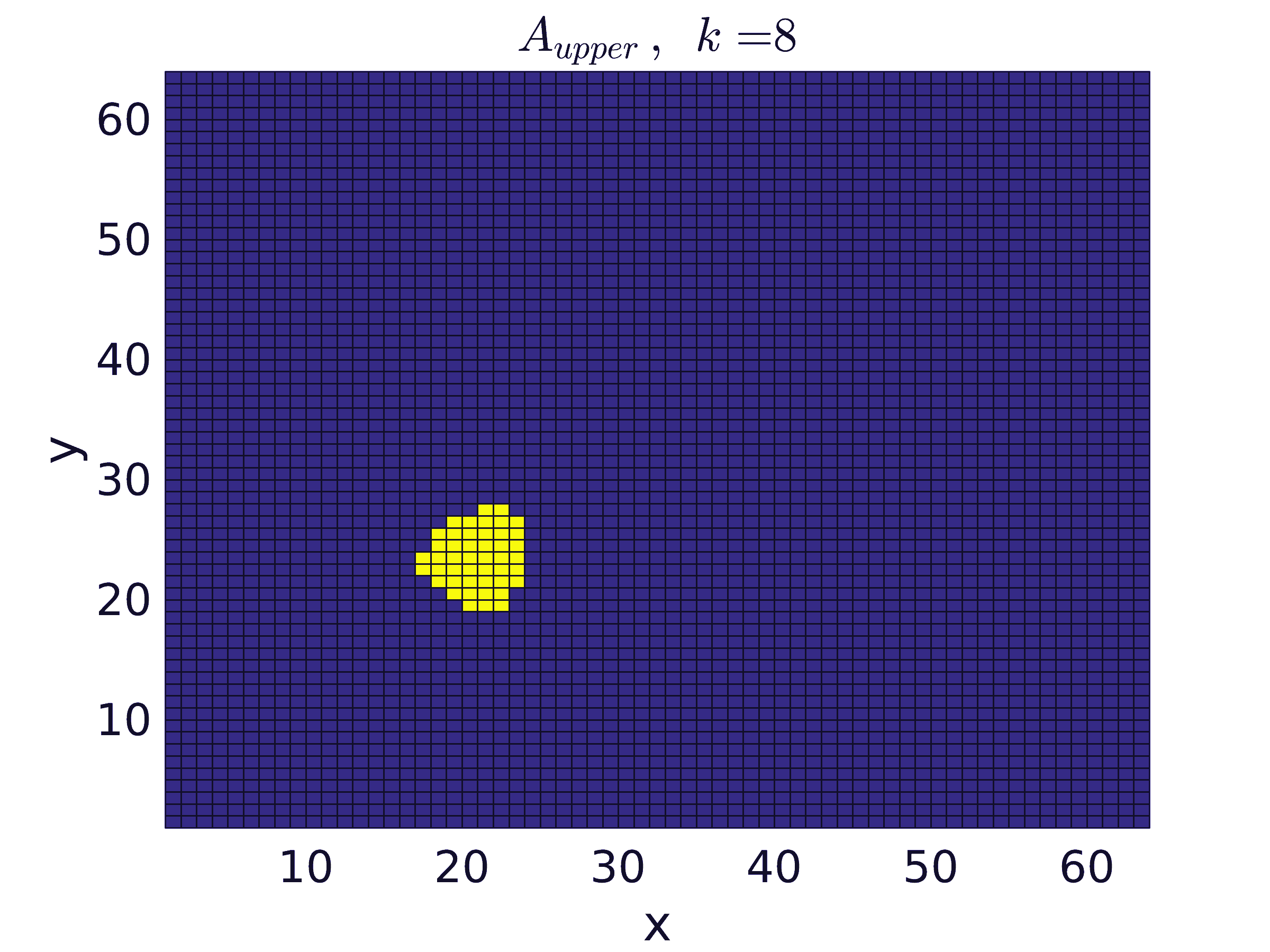}\\
\includegraphics[width=0.31\textwidth]{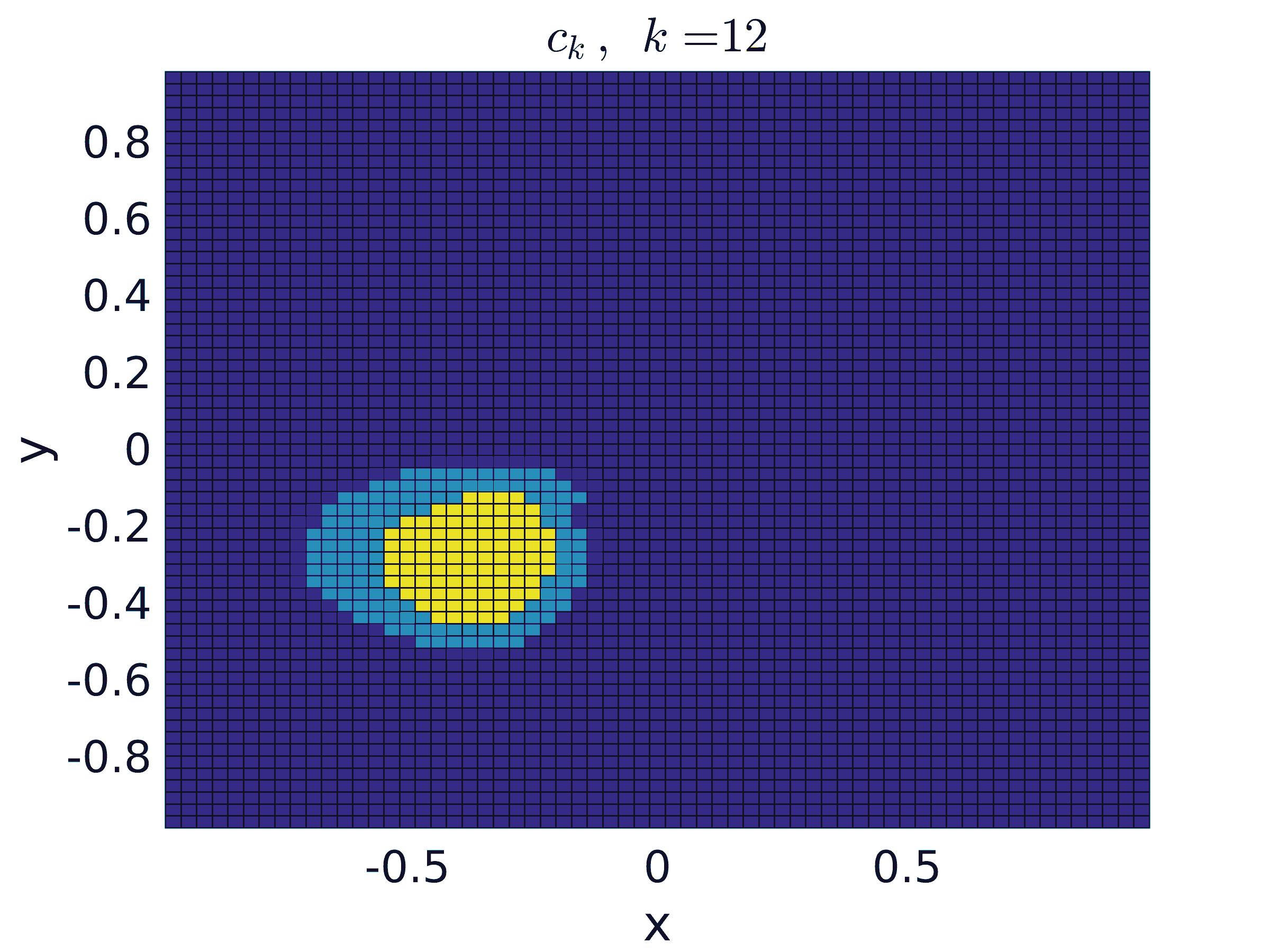}
\hspace*{0.01\textwidth}
\includegraphics[width=0.31\textwidth]{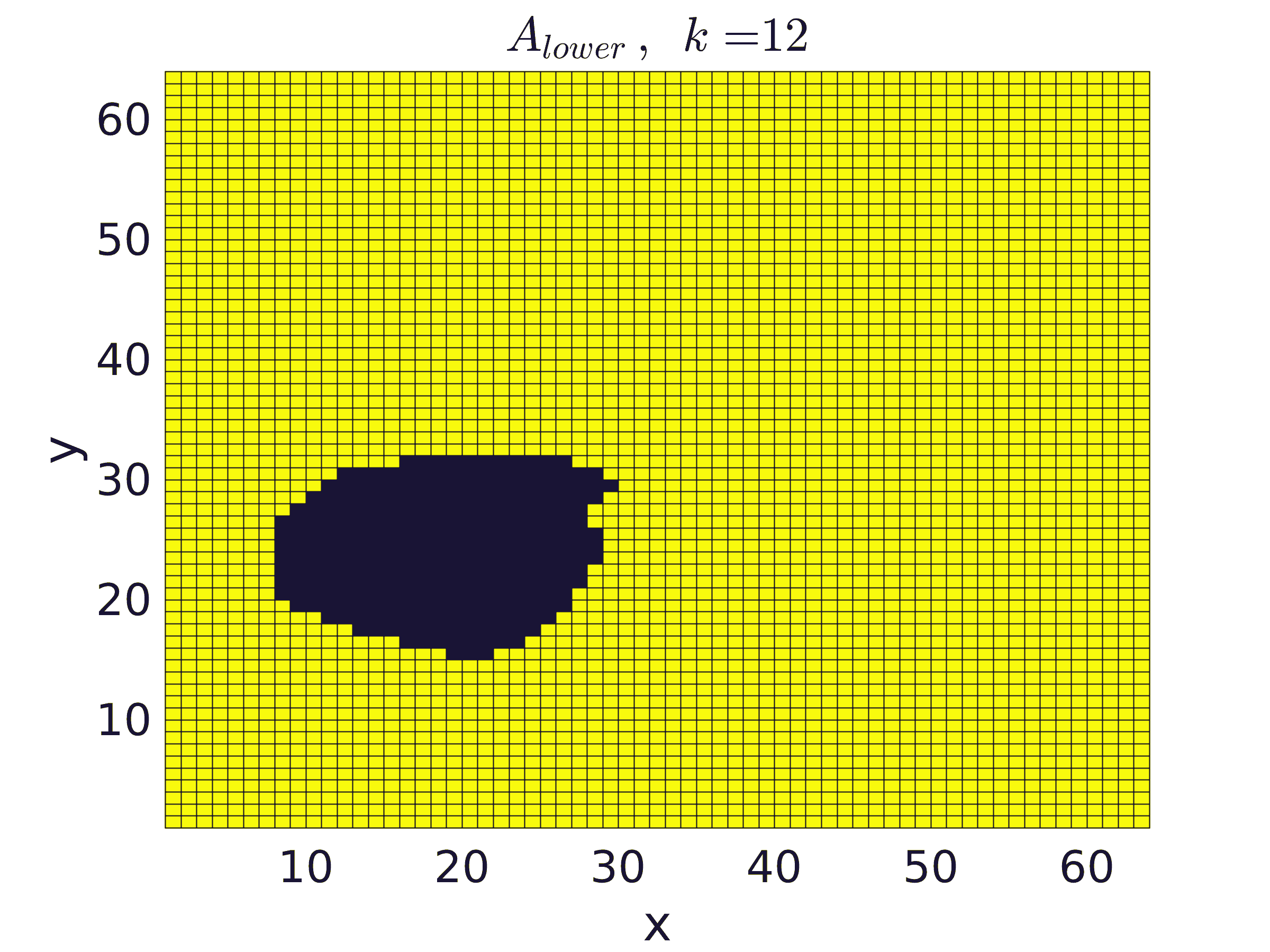}
\hspace*{0.01\textwidth}
\includegraphics[width=0.31\textwidth]{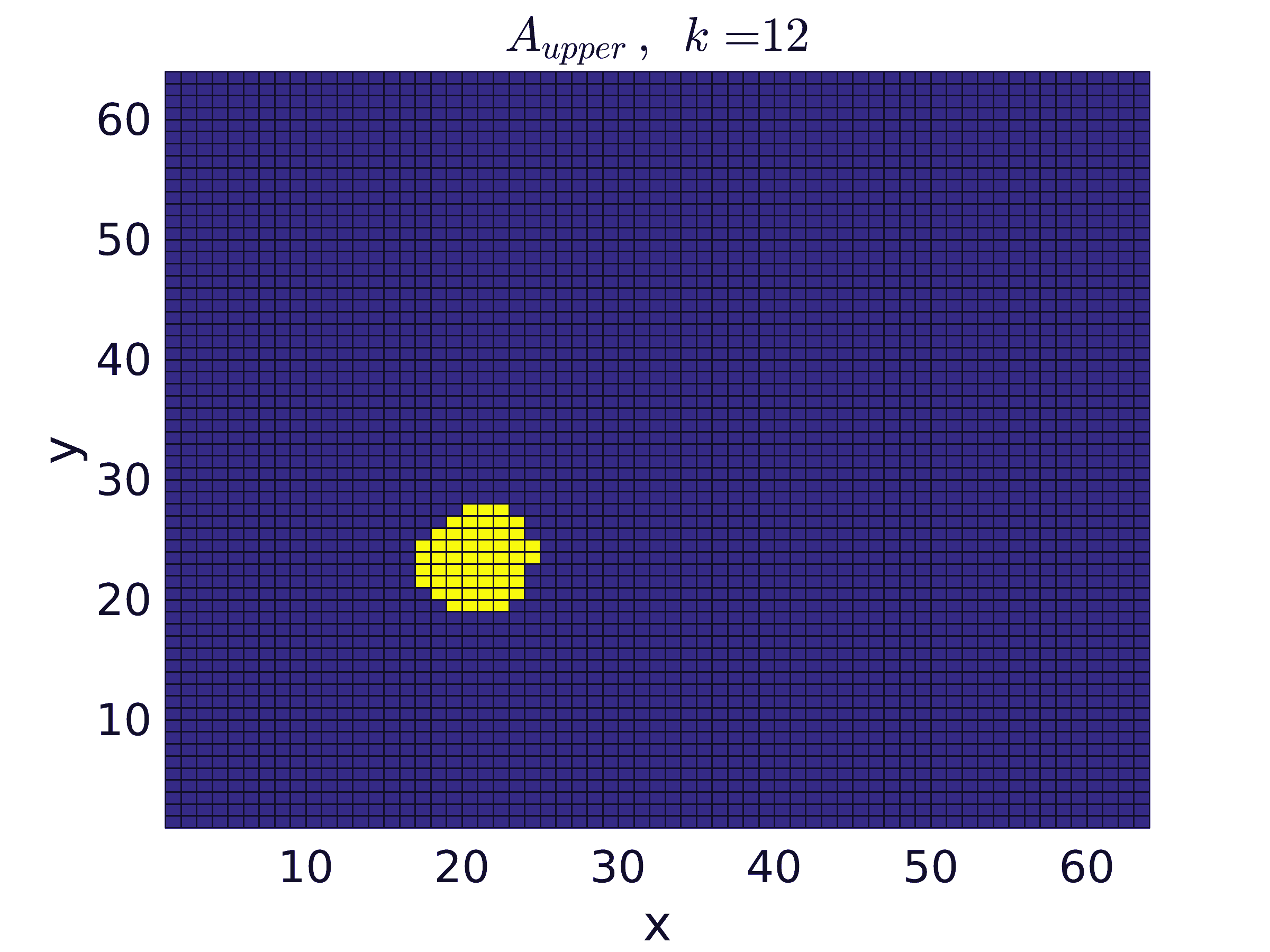}\\
\caption{
Left: reconstructed coefficient $c_k$;
Middle: active set for lower bound;
Right: active set for upper bound;
For $k=1,4,8,12$ (top to bottom) and $\delta=0.01$.
\label{fig:convdel001}}
\end{figure}

Next, we consider a fixed noise level of $\delta=0.001$ and illustrate
the gain in computational effort obtained by
using the active set from the previous Newton step as an initial guess
(warm start) as compared to starting each Newton step with an empty
active set $\mathcal A = \emptyset$ for the upper bound and a full active
set $\mathcal C = \mathcal N$ for the lower bound (cold start), see
Figure \ref{fig:warmcold}. We do so for the inverse potential problem from Section \ref{sec_cprob}. The CPU
times were 8.60 (351.20) seconds with warm start and 9.15 (525.99)
seconds with cold start to achieve an $L^1$ error of 0.0959 (0.0827)
for N=32 (N=64).
\begin{figure}[p]
\includegraphics[width=0.48\textwidth]{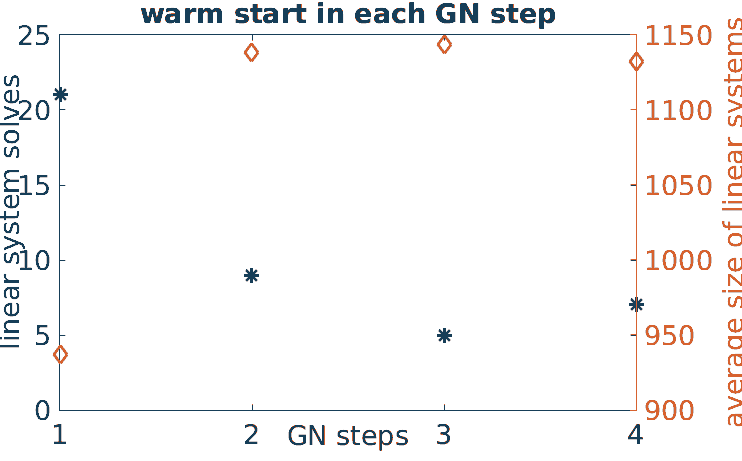}
\hspace*{0.01\textwidth}
\includegraphics[width=0.48\textwidth]{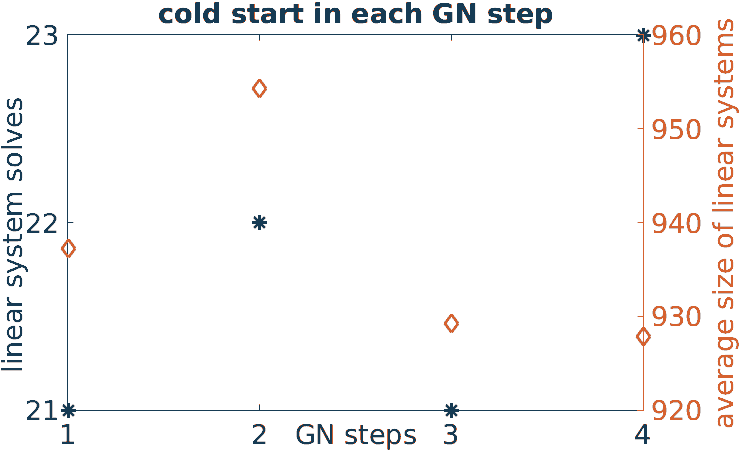}\\
\includegraphics[width=0.48\textwidth]{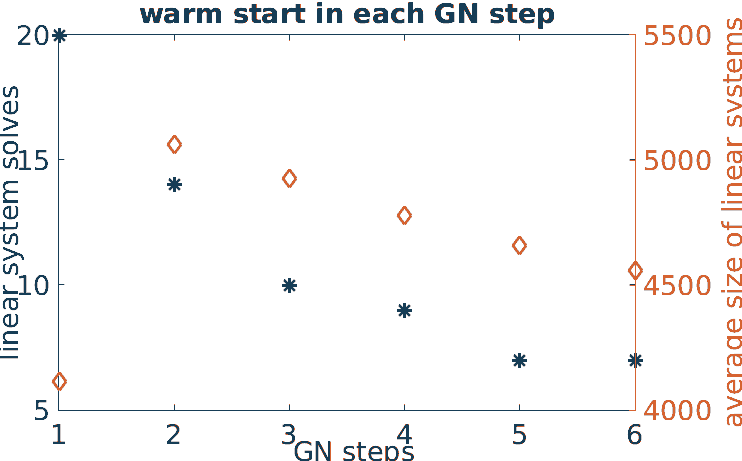}
\hspace*{0.01\textwidth}
\includegraphics[width=0.48\textwidth]{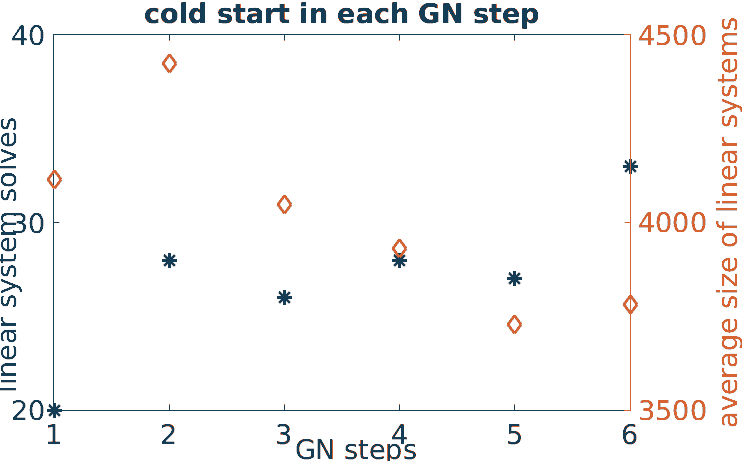}
\caption{
Linear system solves (stars) and average size of linear systems (diamonds);
Left: warm start;
Right: cold start;
Top: N=32;
Bottom: N=64.
\label{fig:warmcold}}
\end{figure}

Finally, in order to demonstrate the ability of the method to also deal with coefficients that would not allow for a well-defined parameter-to-state map, we consider the inverse potential problem from Section \ref{sec_cprob} with the test cases
\[
\begin{aligned}
&\mbox{test 2: }c_{ex}(x,y)=1-10\cdot {1\!\!{\rm I}}_{B_1}+5\cdot {1\!\!{\rm I}}_{B_2}
&& \ul{c}=-9,&& \ol{c}=6\,,\\
&\mbox{test 3: }c_{ex}(x,y)=-10\cdot {1\!\!{\rm I}}_{B_1}-5\cdot {1\!\!{\rm I}}_{B_2}
&& \ul{c}=-10,&& \ol{c}=0\,,
\end{aligned}
\]
where $B_1= B_{0.2}(-0.4,-0.3)$, $B_2= B_{0.1}(0.5,0.5)$,
see Figures \ref{fig:exsol_spots_neg}, \ref{fig:convdel001_neg}, \ref{fig:exsol_spots_nneg}, \ref{fig:convdel001_nneg}.

\begin{figure}[p]
\includegraphics[width=0.48\textwidth]{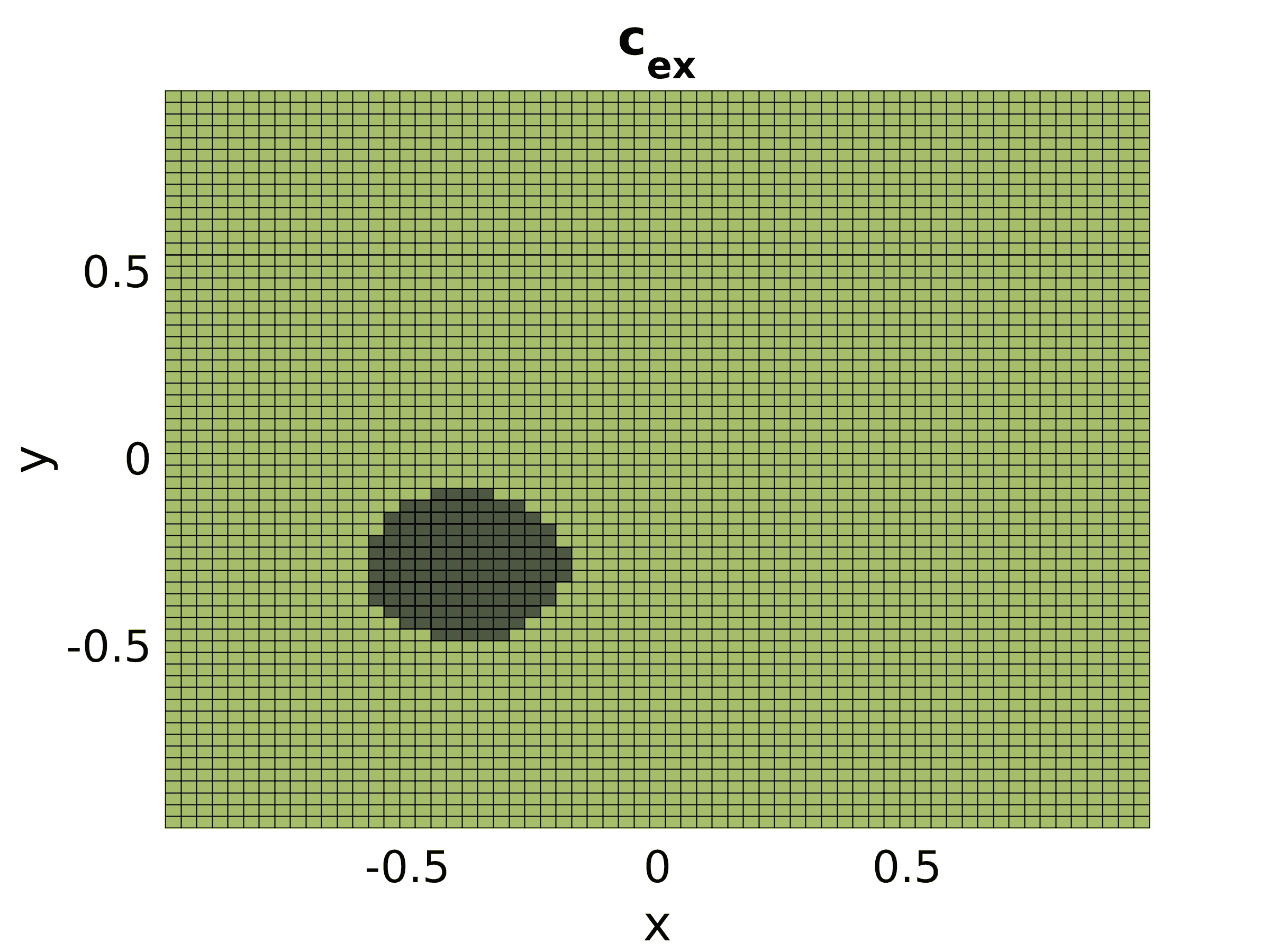}
\hspace*{0.01\textwidth}
\includegraphics[width=0.48\textwidth]{markspots64.png}
\caption{
Test 2: left: exact coefficient $c_{ex}$; $\ul{c}=-9$, $\ol{c}=6$;
right: locations of spots for testing weak * $L^\infty$ convergence
\label{fig:exsol_spots_neg}}
\end{figure}

\begin{figure}[p]
\includegraphics[width=0.31\textwidth]{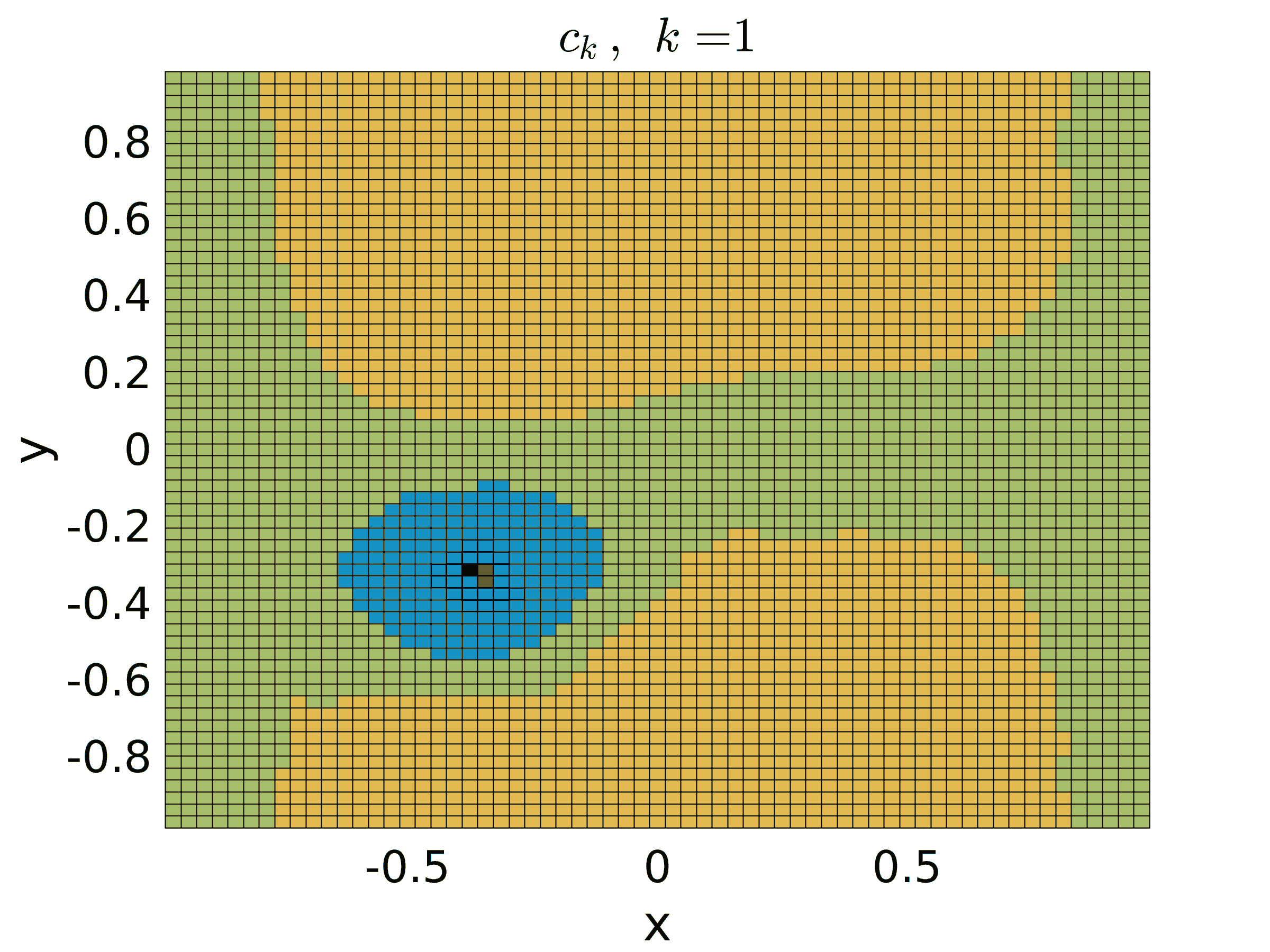}
\hspace*{0.01\textwidth}
\includegraphics[width=0.31\textwidth]{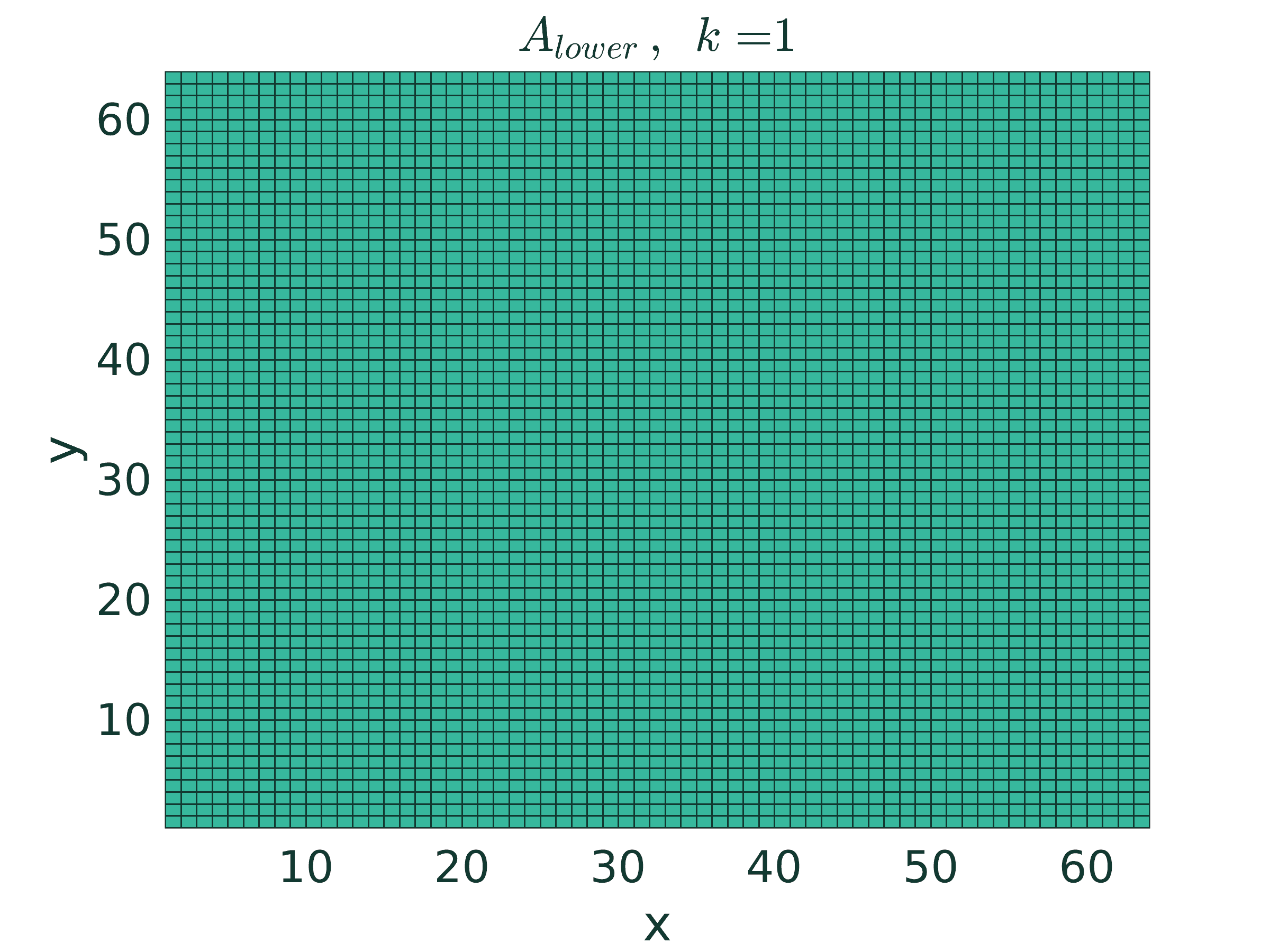}
\hspace*{0.01\textwidth}
\includegraphics[width=0.31\textwidth]{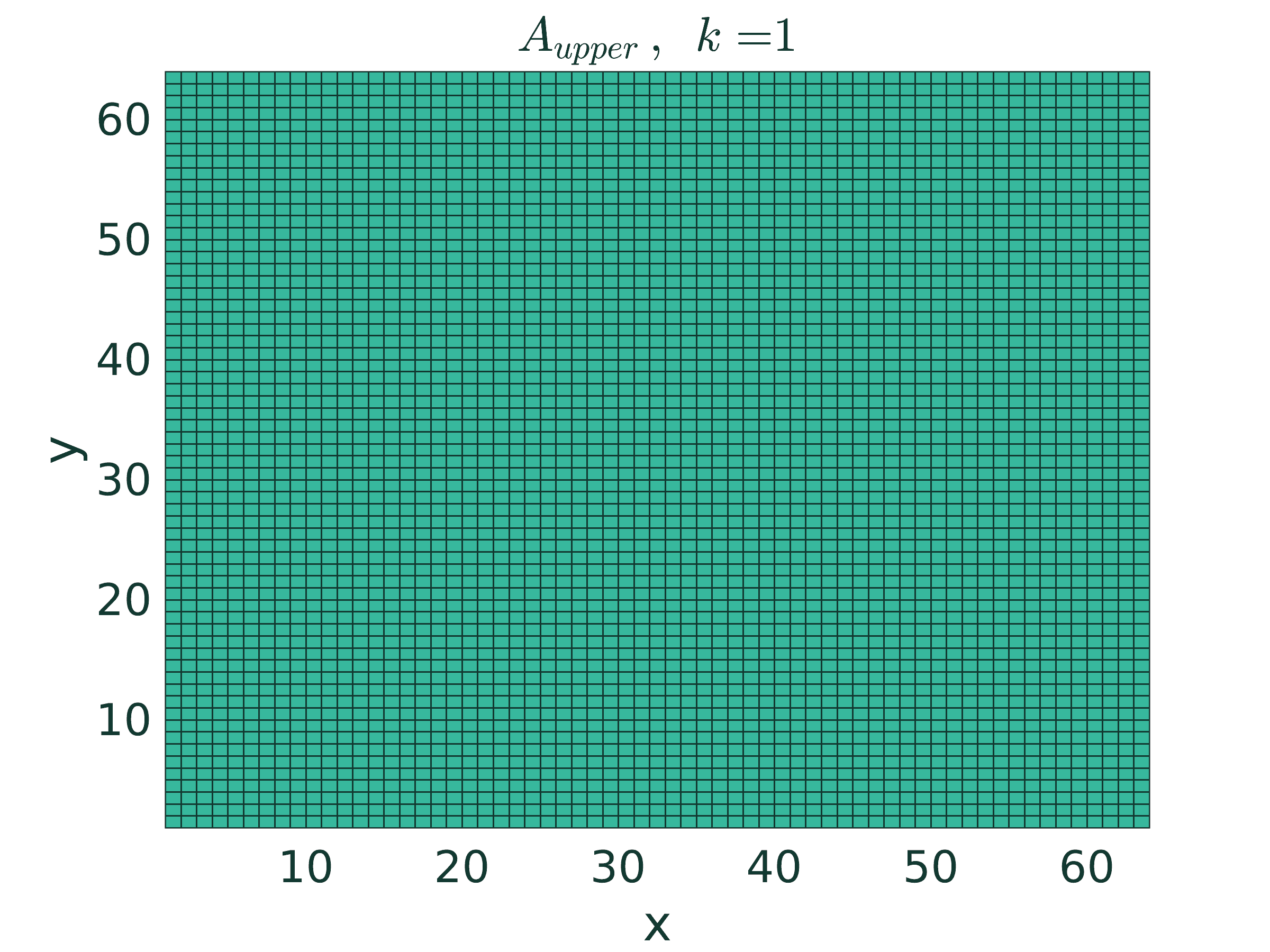}\\
\includegraphics[width=0.31\textwidth]{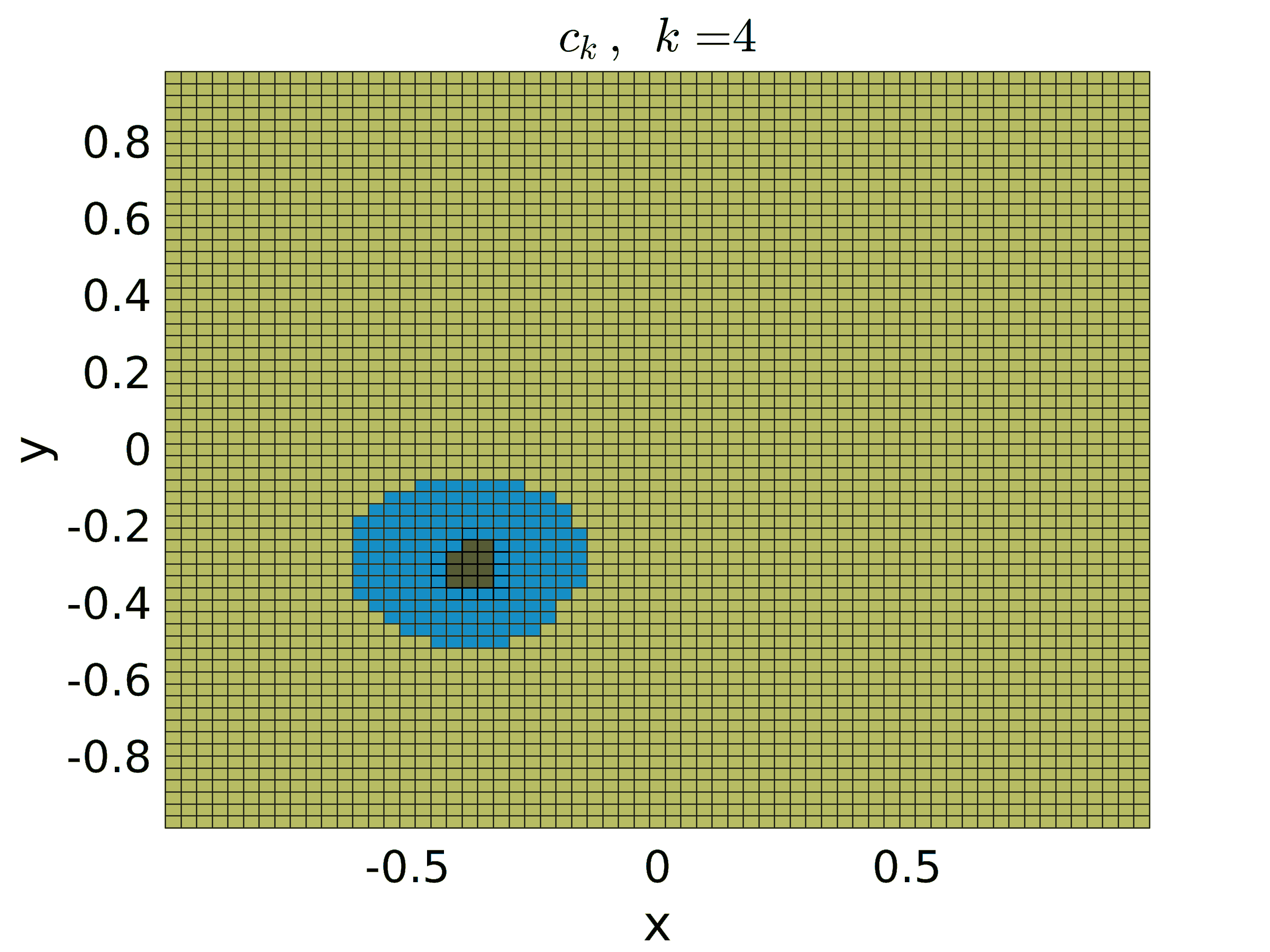}
\hspace*{0.01\textwidth}
\includegraphics[width=0.31\textwidth]{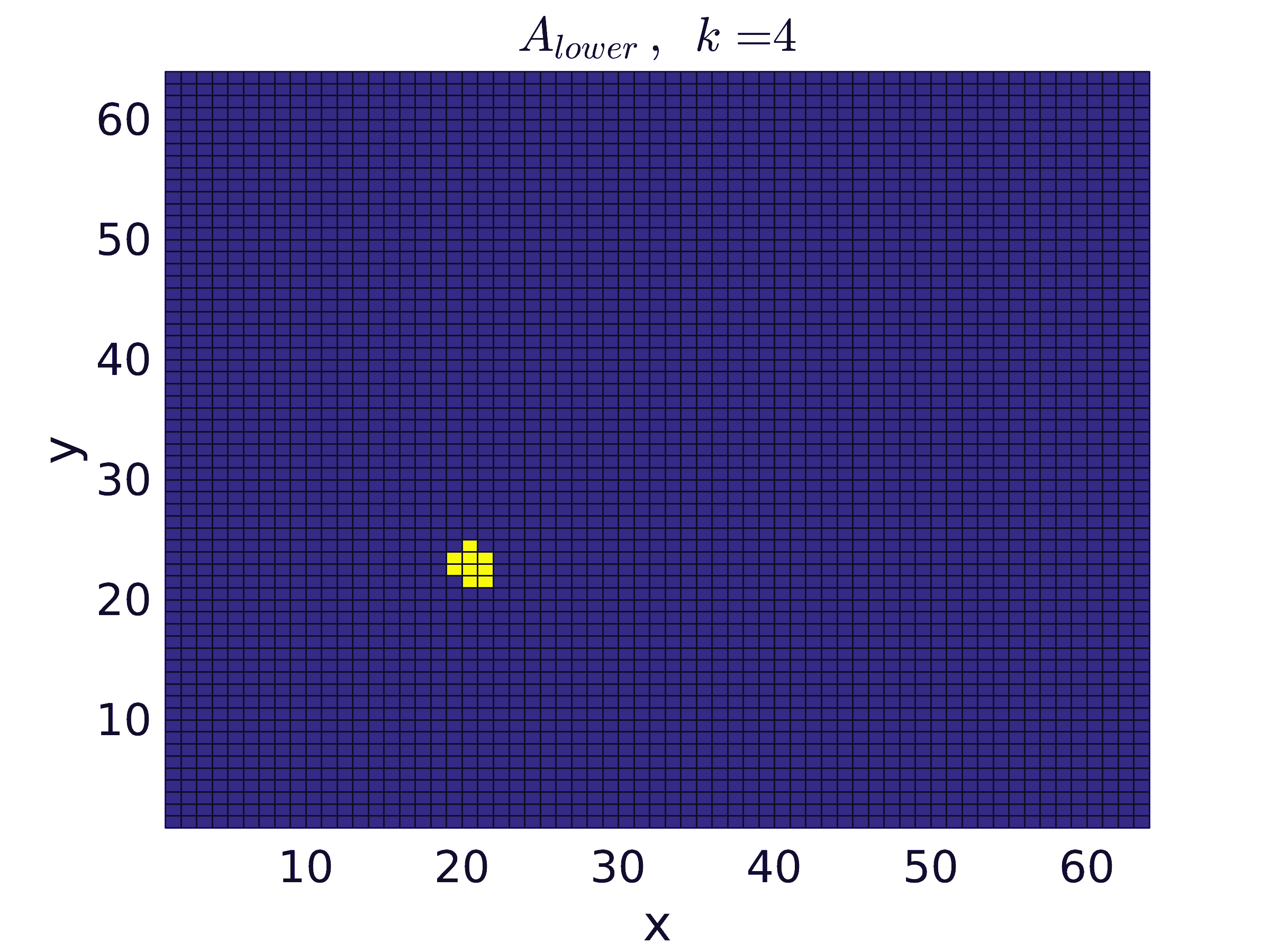}
\hspace*{0.01\textwidth}
\includegraphics[width=0.31\textwidth]{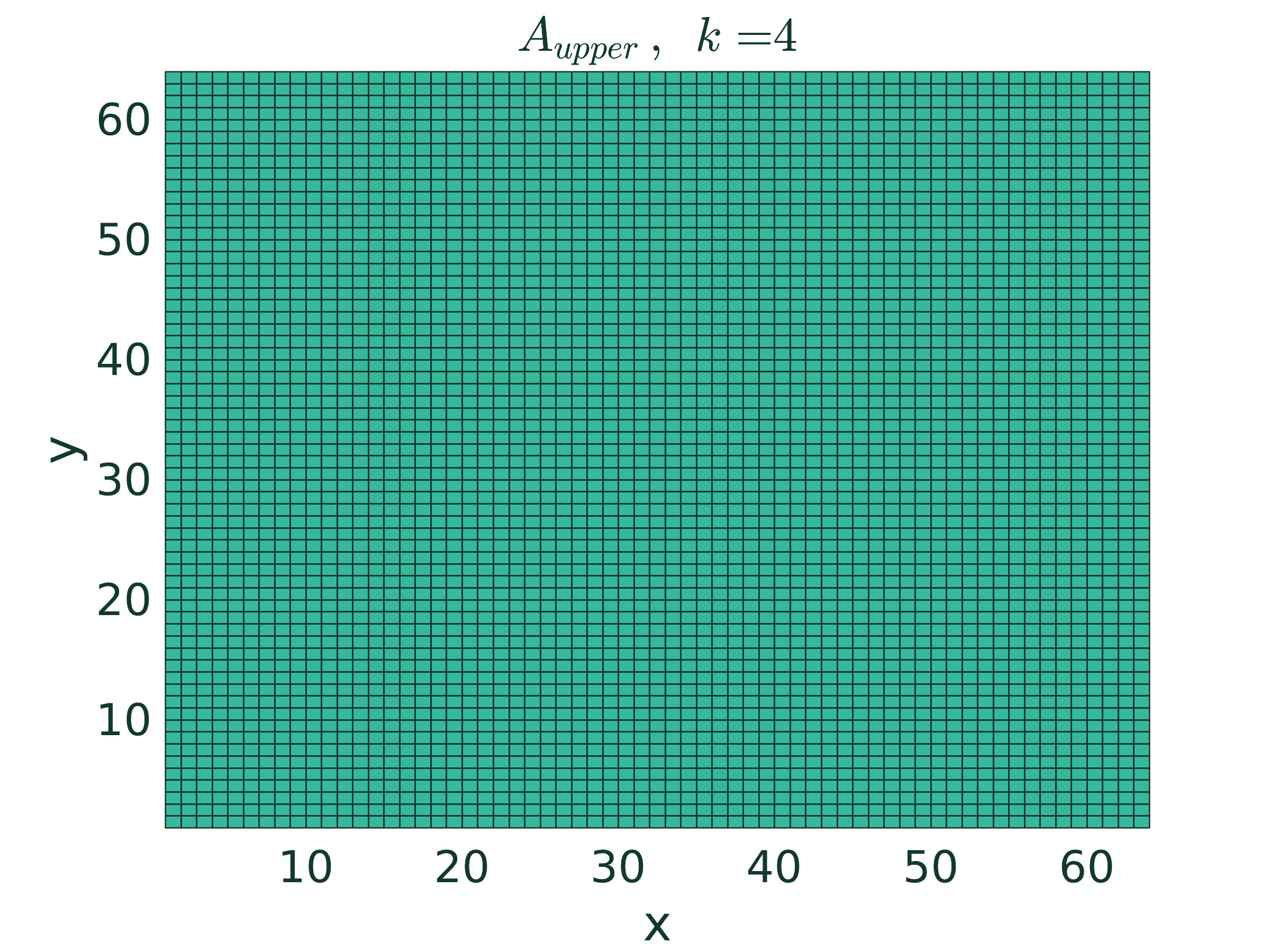}\\
\includegraphics[width=0.31\textwidth]{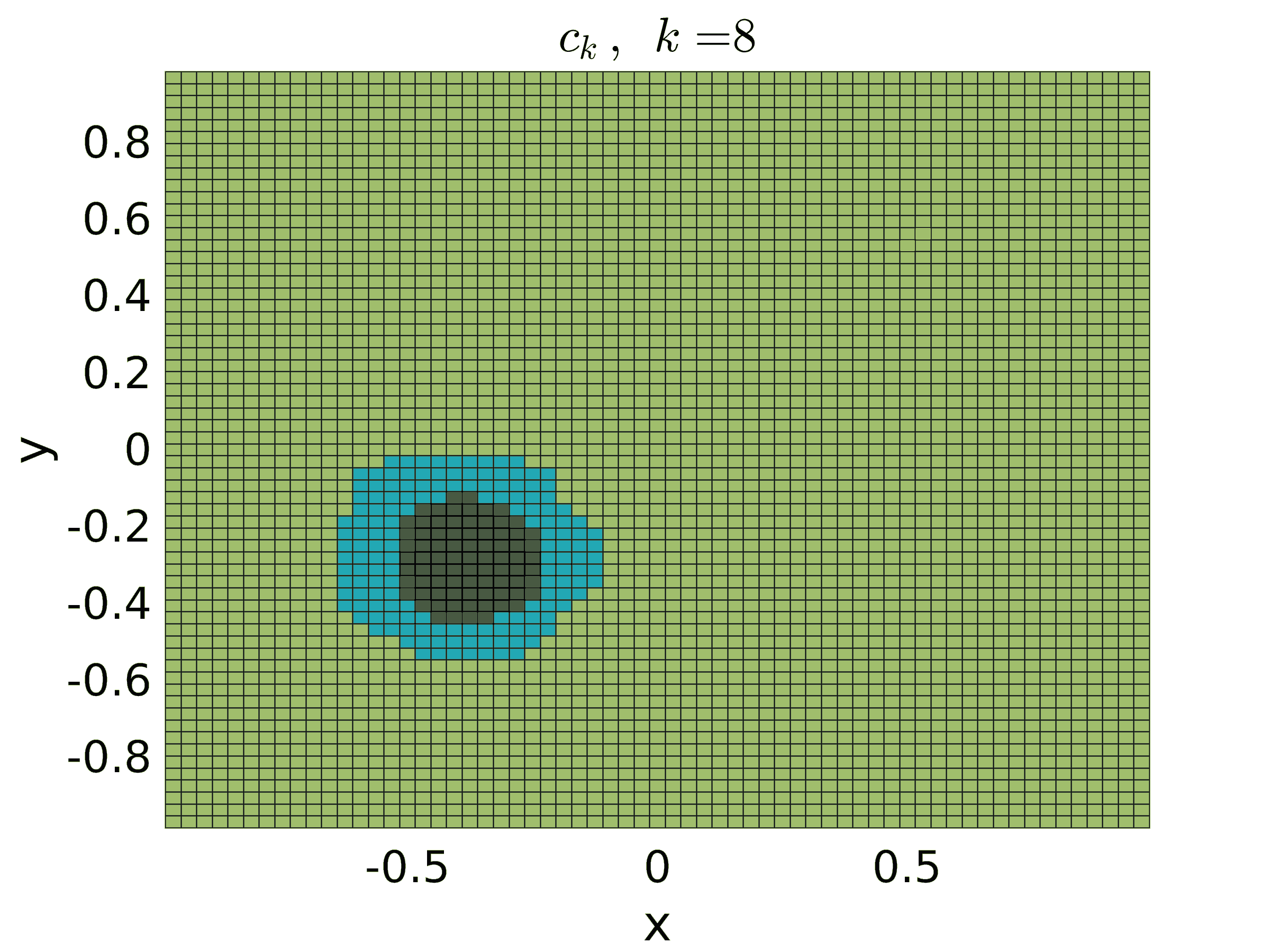}
\hspace*{0.01\textwidth}
\includegraphics[width=0.31\textwidth]{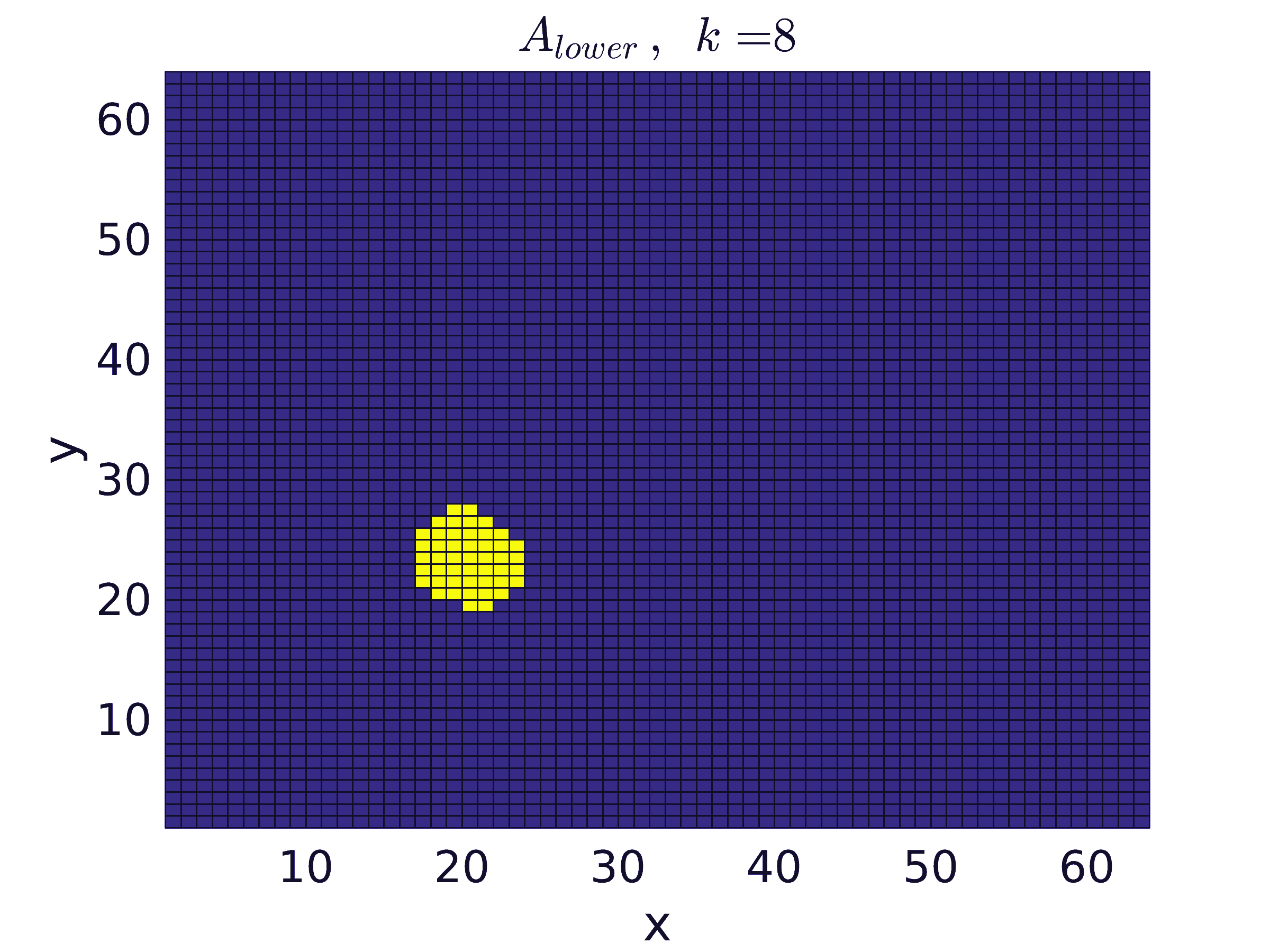}
\hspace*{0.01\textwidth}
\includegraphics[width=0.31\textwidth]{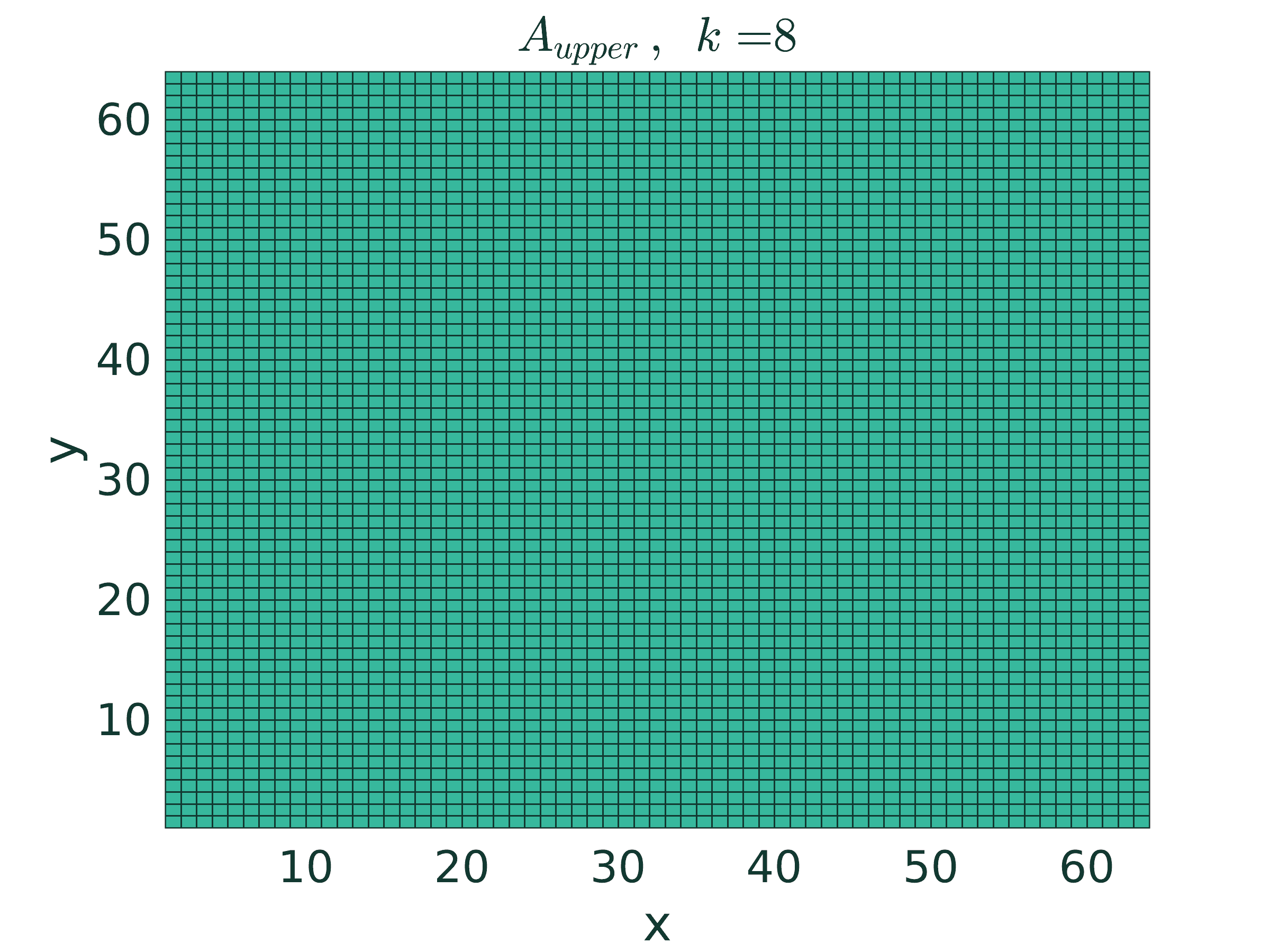}\\
\includegraphics[width=0.31\textwidth]{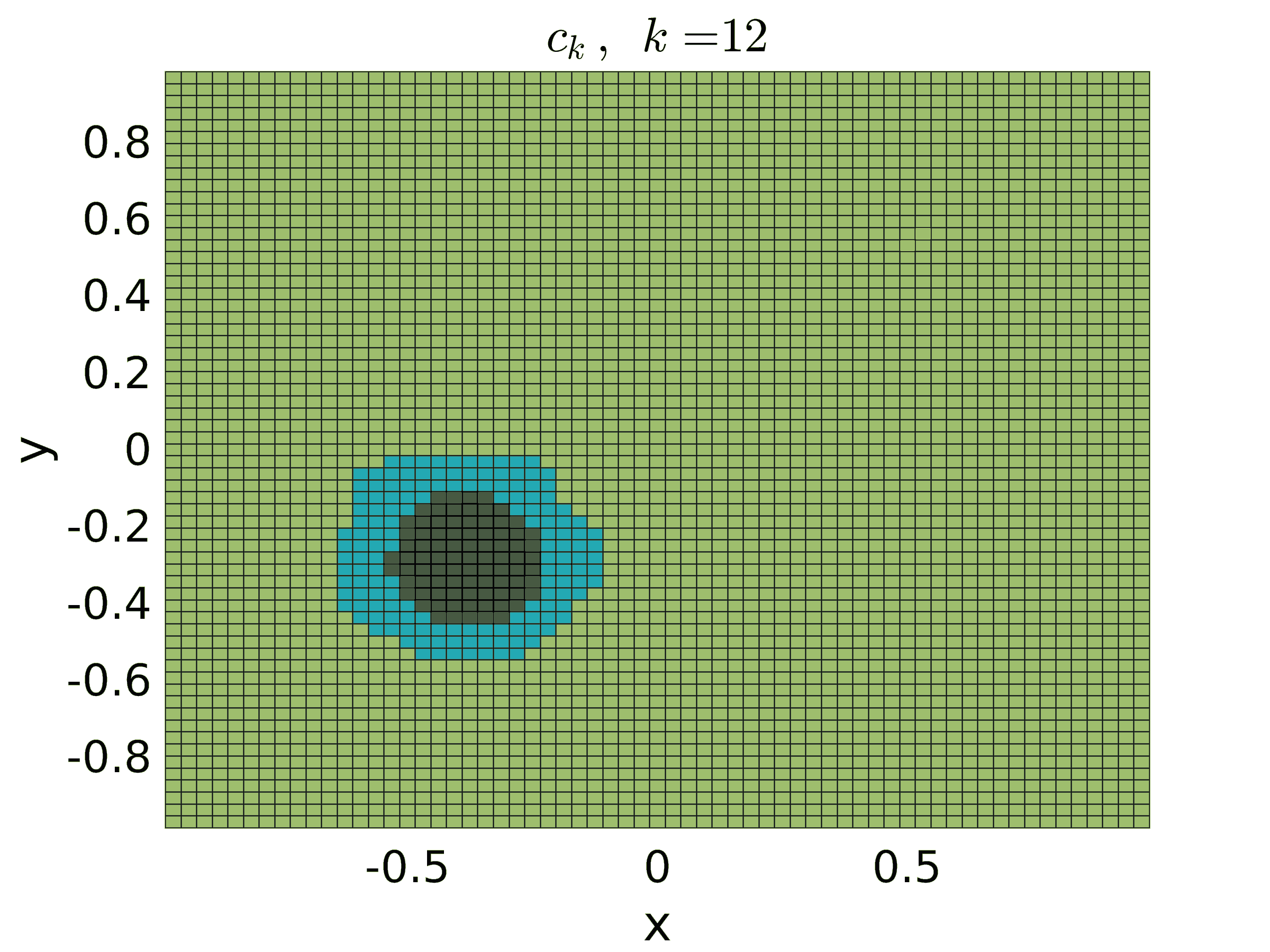}
\hspace*{0.01\textwidth}
\includegraphics[width=0.31\textwidth]{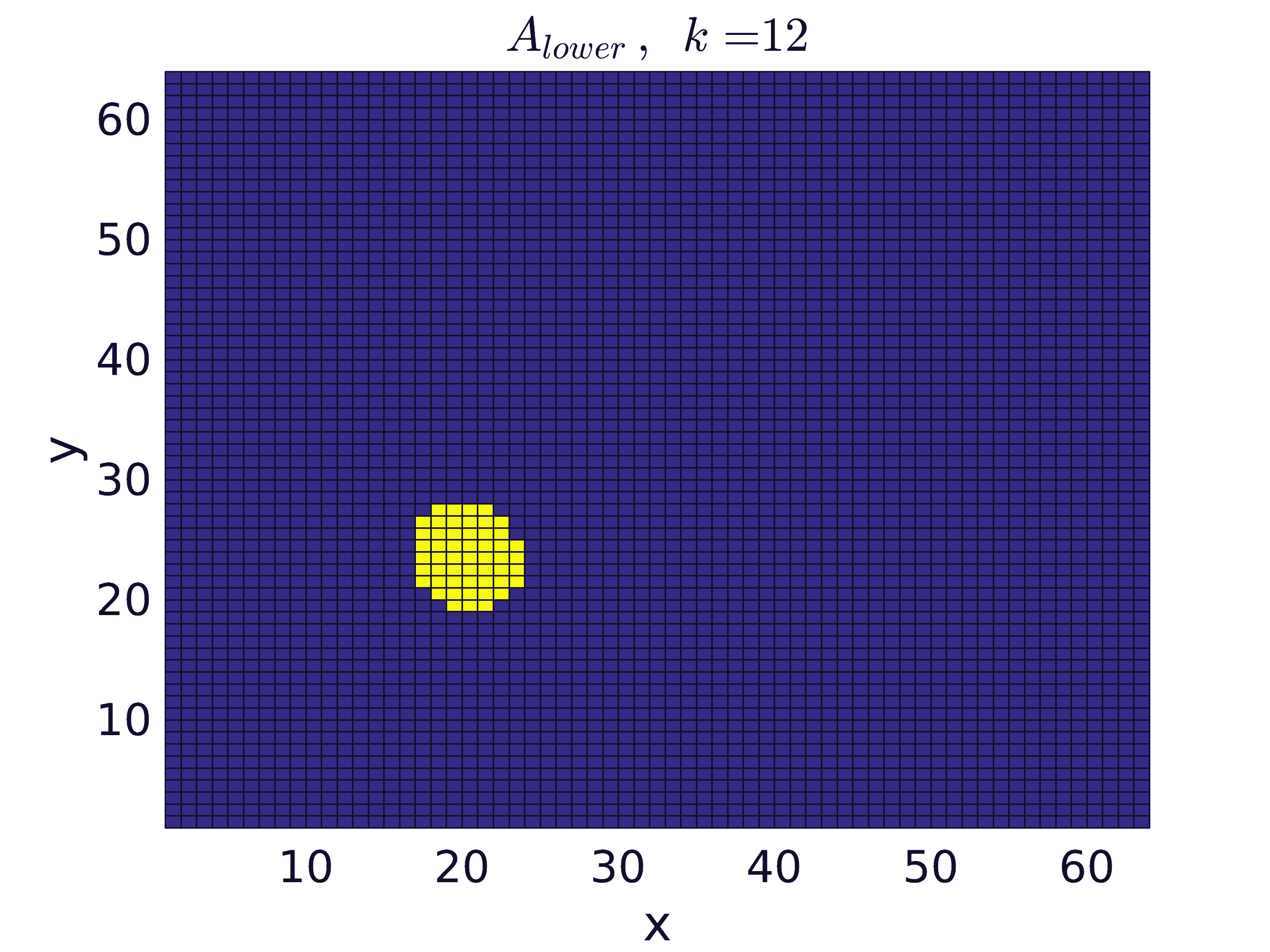}
\hspace*{0.01\textwidth}
\includegraphics[width=0.31\textwidth]{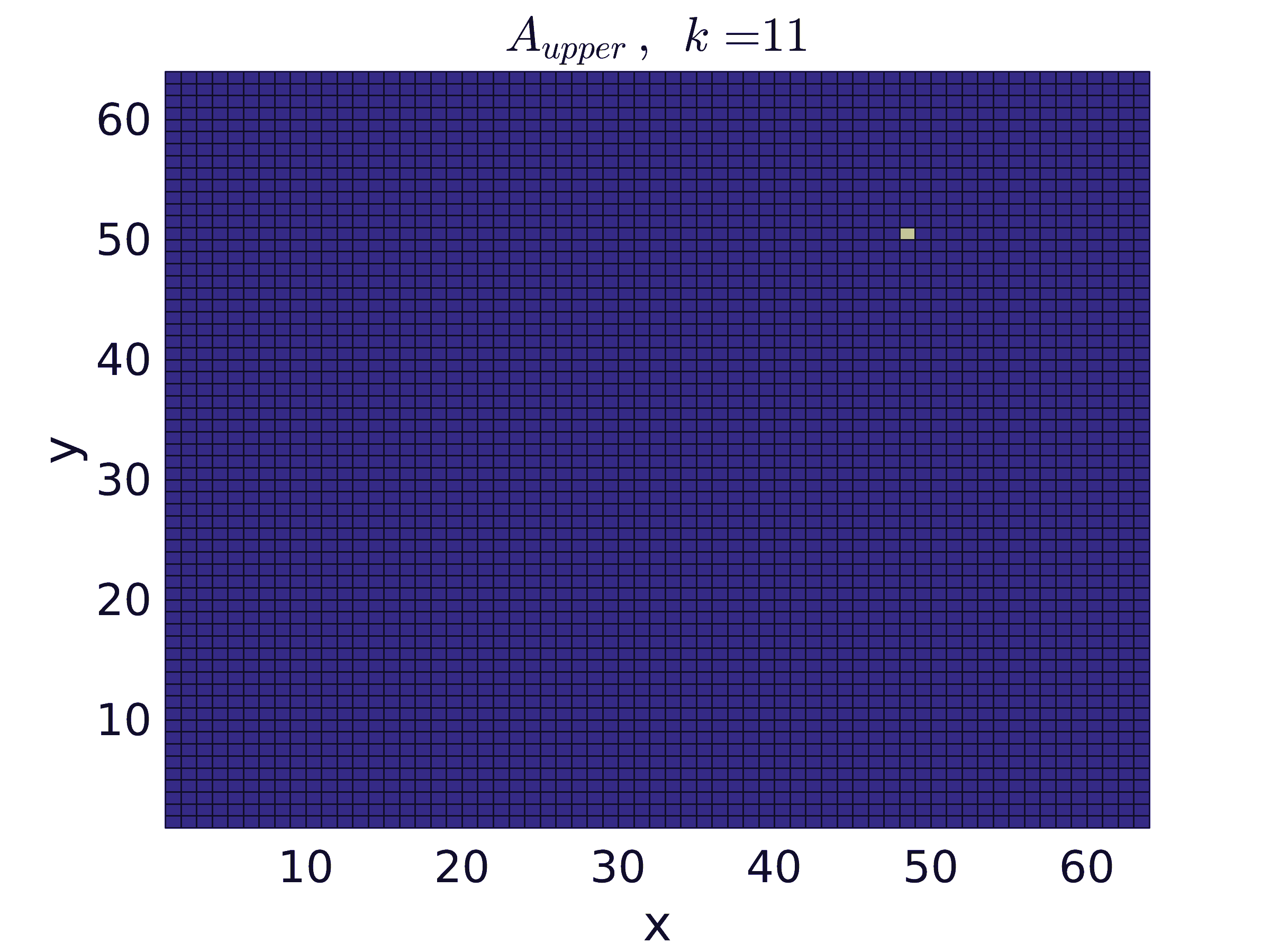}\\
\caption{
Test 2: Left: reconstructed coefficient $c_k$;
Middle: active set for lower bound;
Right: active set for upper bound;
For $k=1,4,8,12$ (top to bottom) and $\delta=0.01$.
\label{fig:convdel001_neg}}
\end{figure}

\begin{figure}[p]
\includegraphics[width=0.48\textwidth]{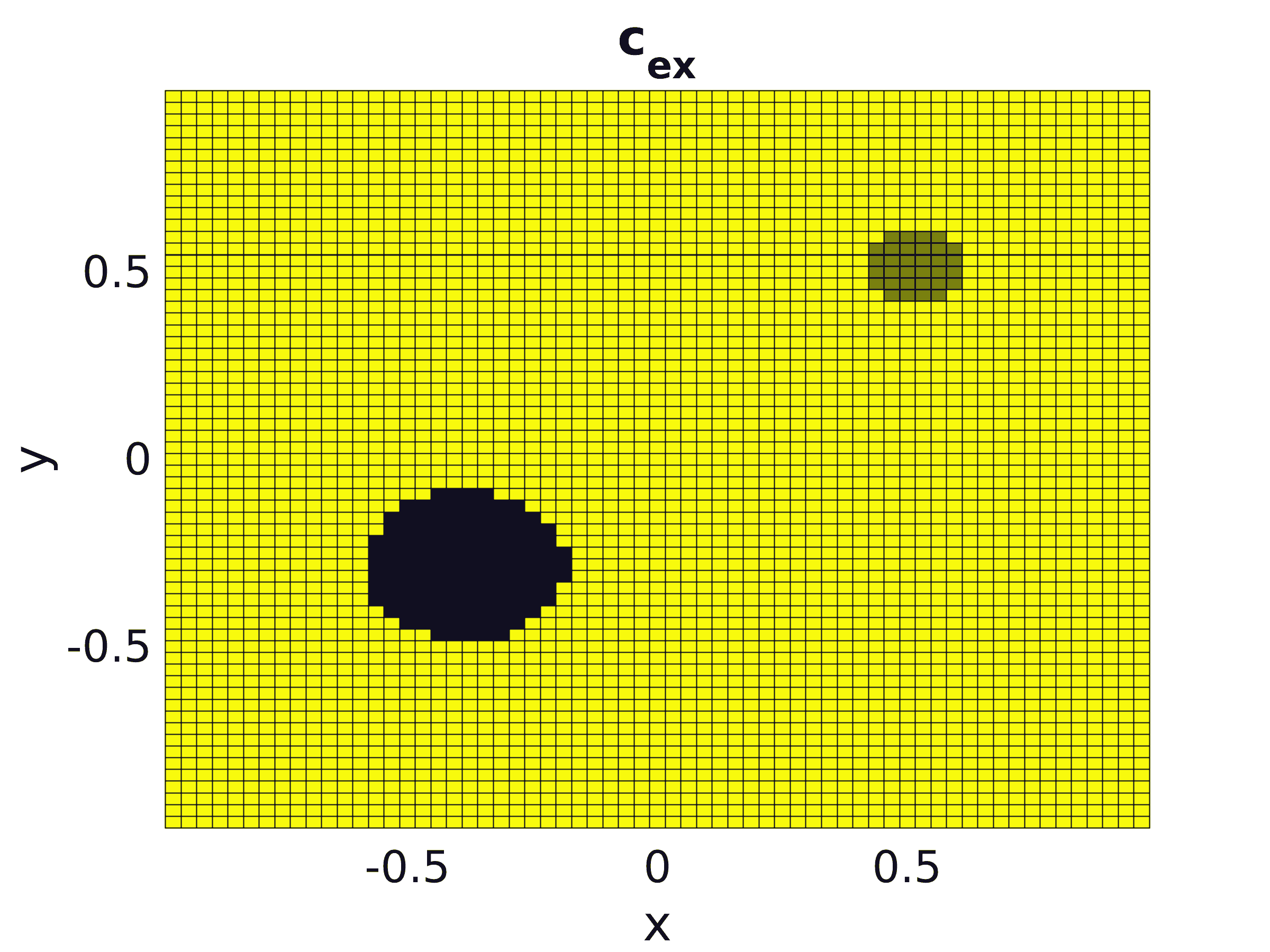}
\hspace*{0.01\textwidth}
\includegraphics[width=0.48\textwidth]{markspots64.png}
\caption{
Test 3: left: exact coefficient $c_{ex}$; $\ul{c}=-10$, $\ol{c}=0$;
right: locations of spots for testing weak * $L^\infty$ convergence
\label{fig:exsol_spots_nneg}}
\end{figure}

\begin{figure}[p]
\includegraphics[width=0.31\textwidth]{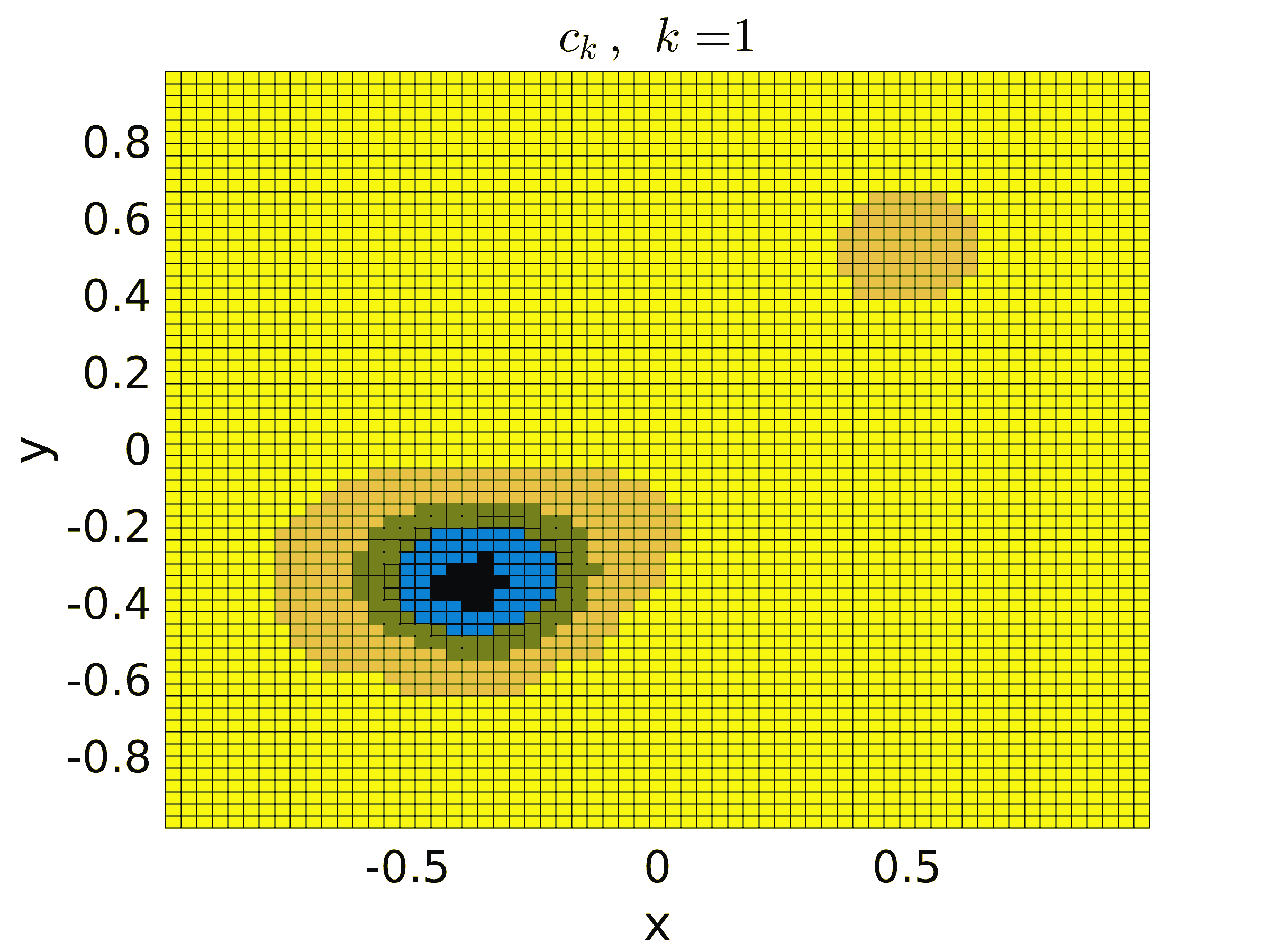}
\hspace*{0.01\textwidth}
\includegraphics[width=0.31\textwidth]{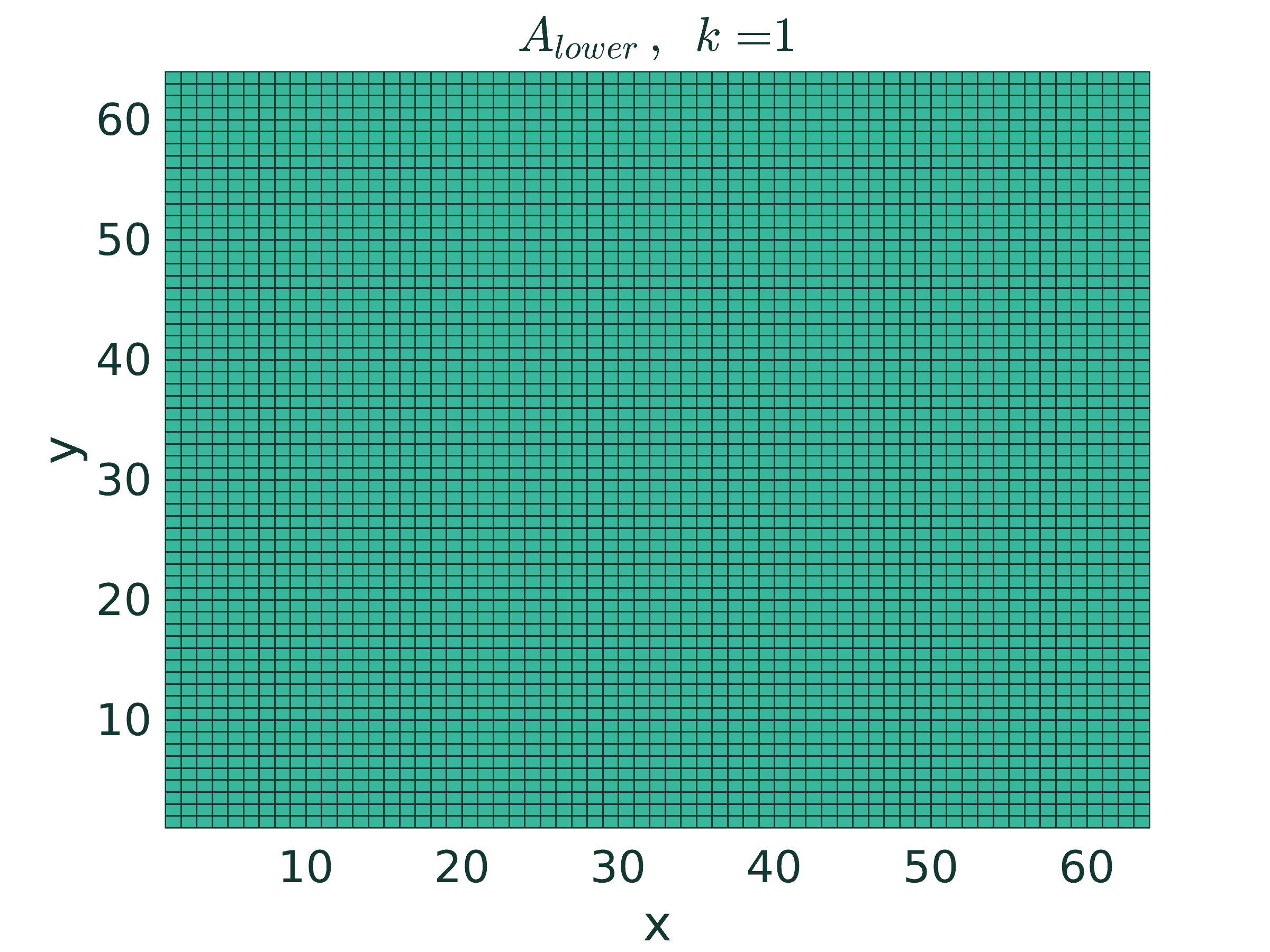}
\hspace*{0.01\textwidth}
\includegraphics[width=0.31\textwidth]{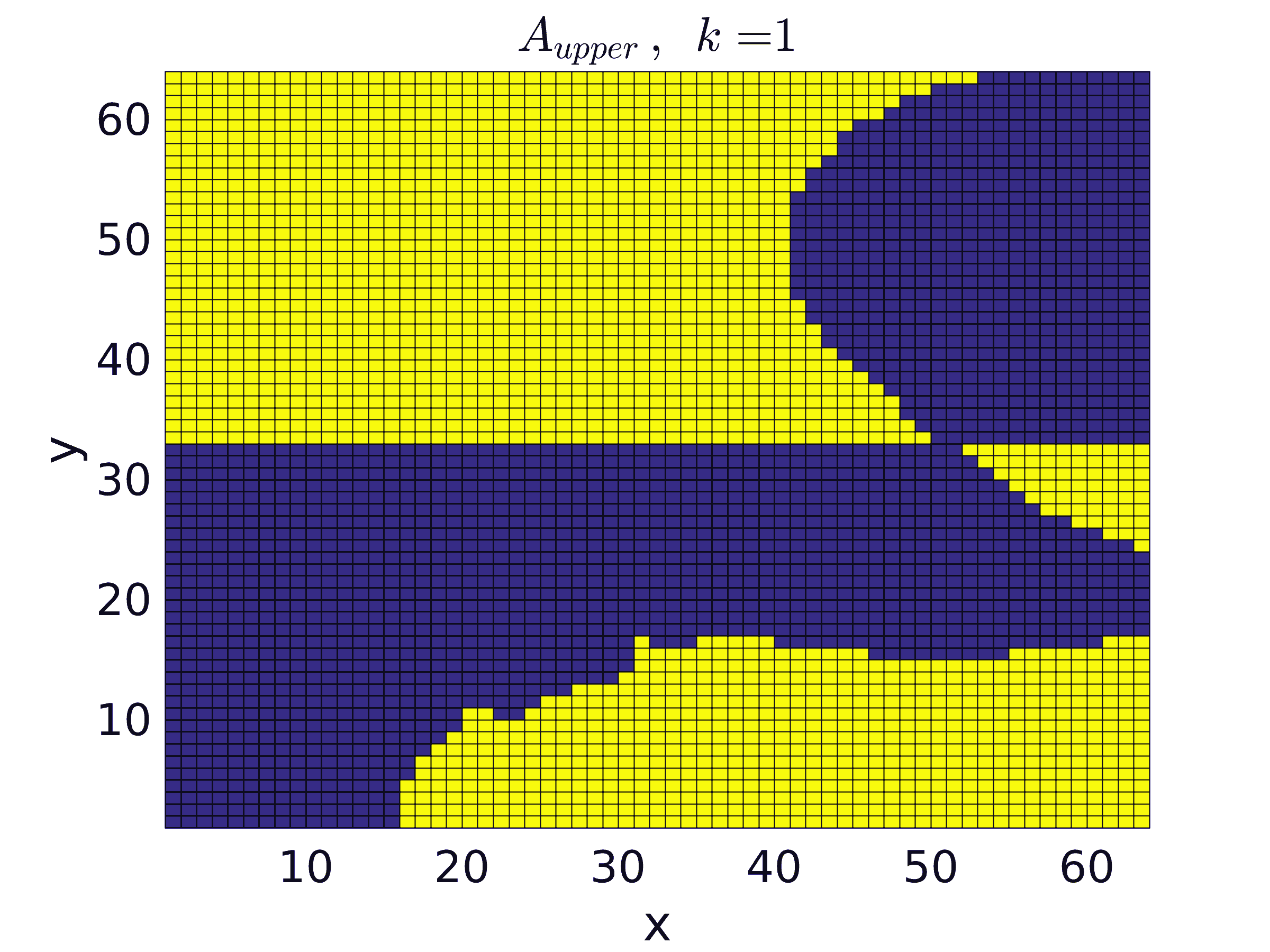}\\
\includegraphics[width=0.31\textwidth]{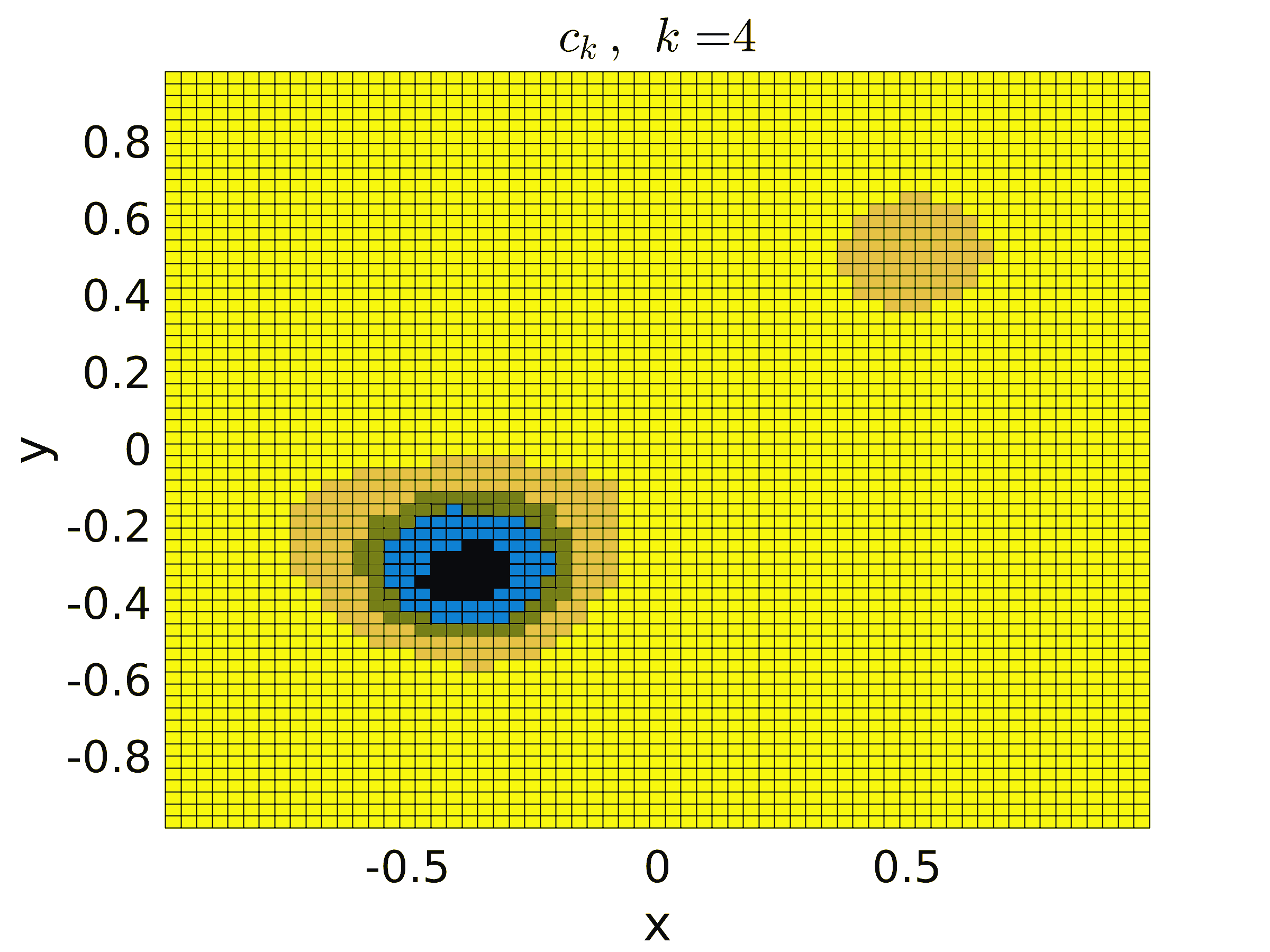}
\hspace*{0.01\textwidth}
\includegraphics[width=0.31\textwidth]{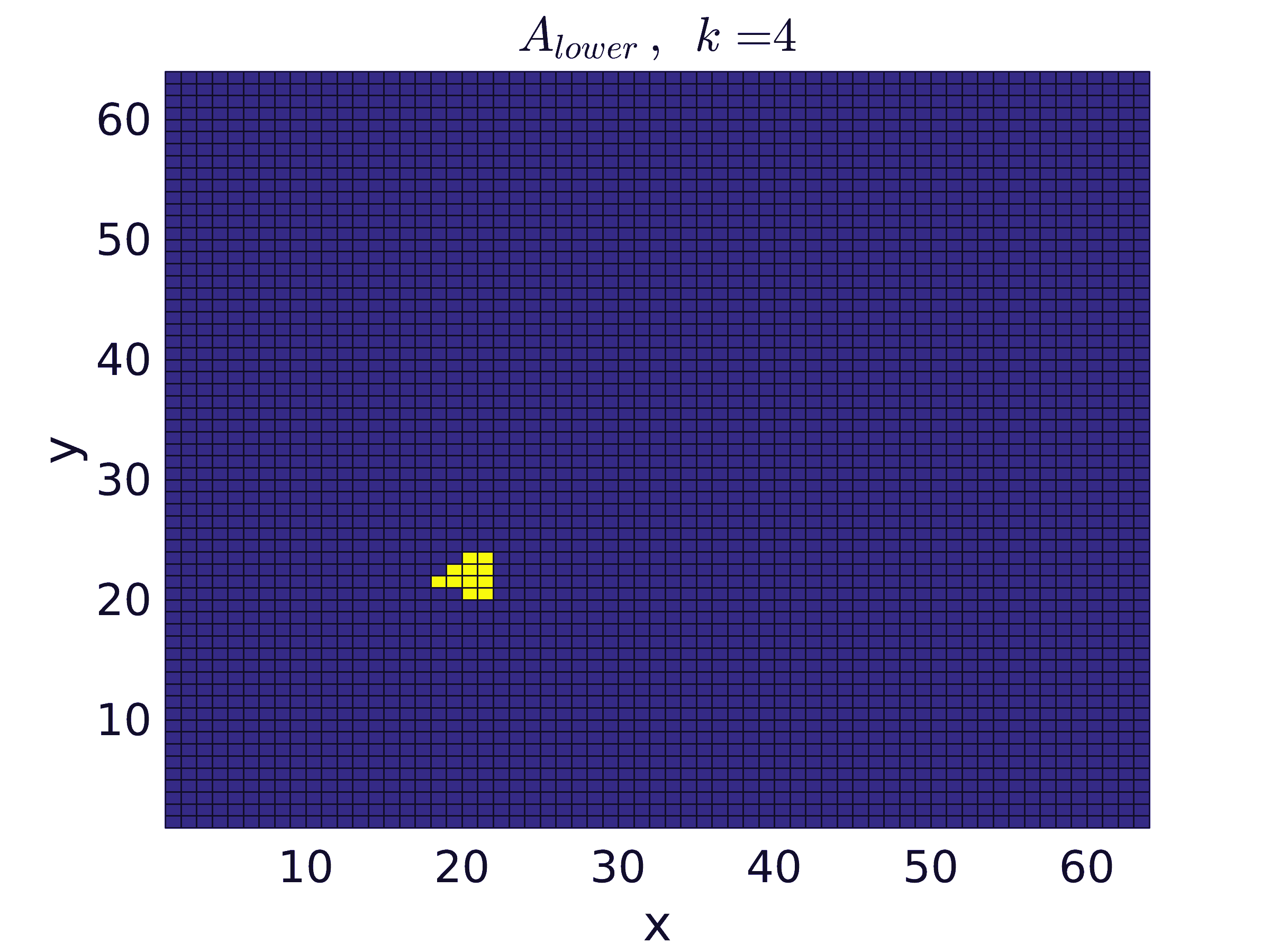}
\hspace*{0.01\textwidth}
\includegraphics[width=0.31\textwidth]{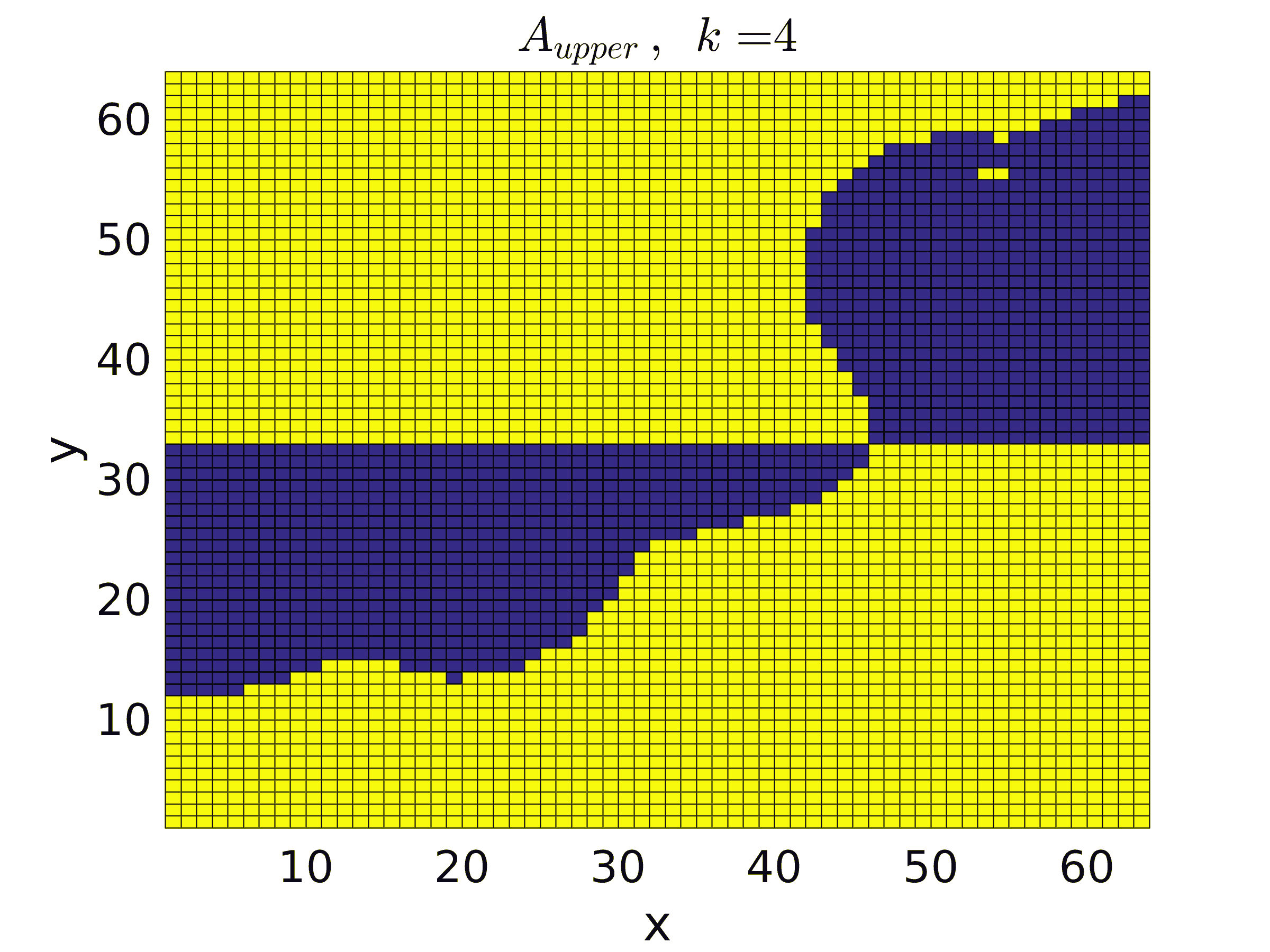}\\
\includegraphics[width=0.31\textwidth]{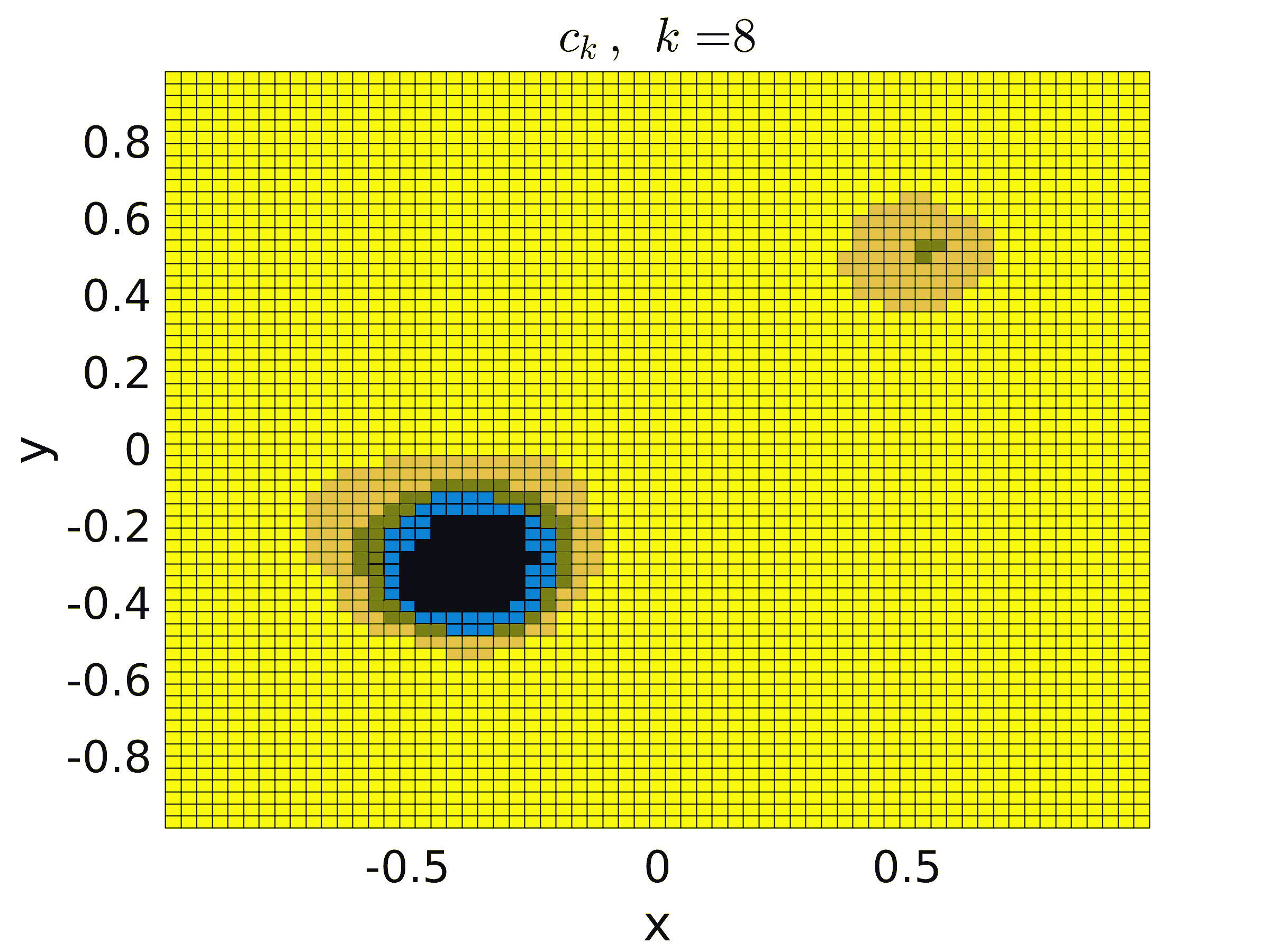}
\hspace*{0.01\textwidth}
\includegraphics[width=0.31\textwidth]{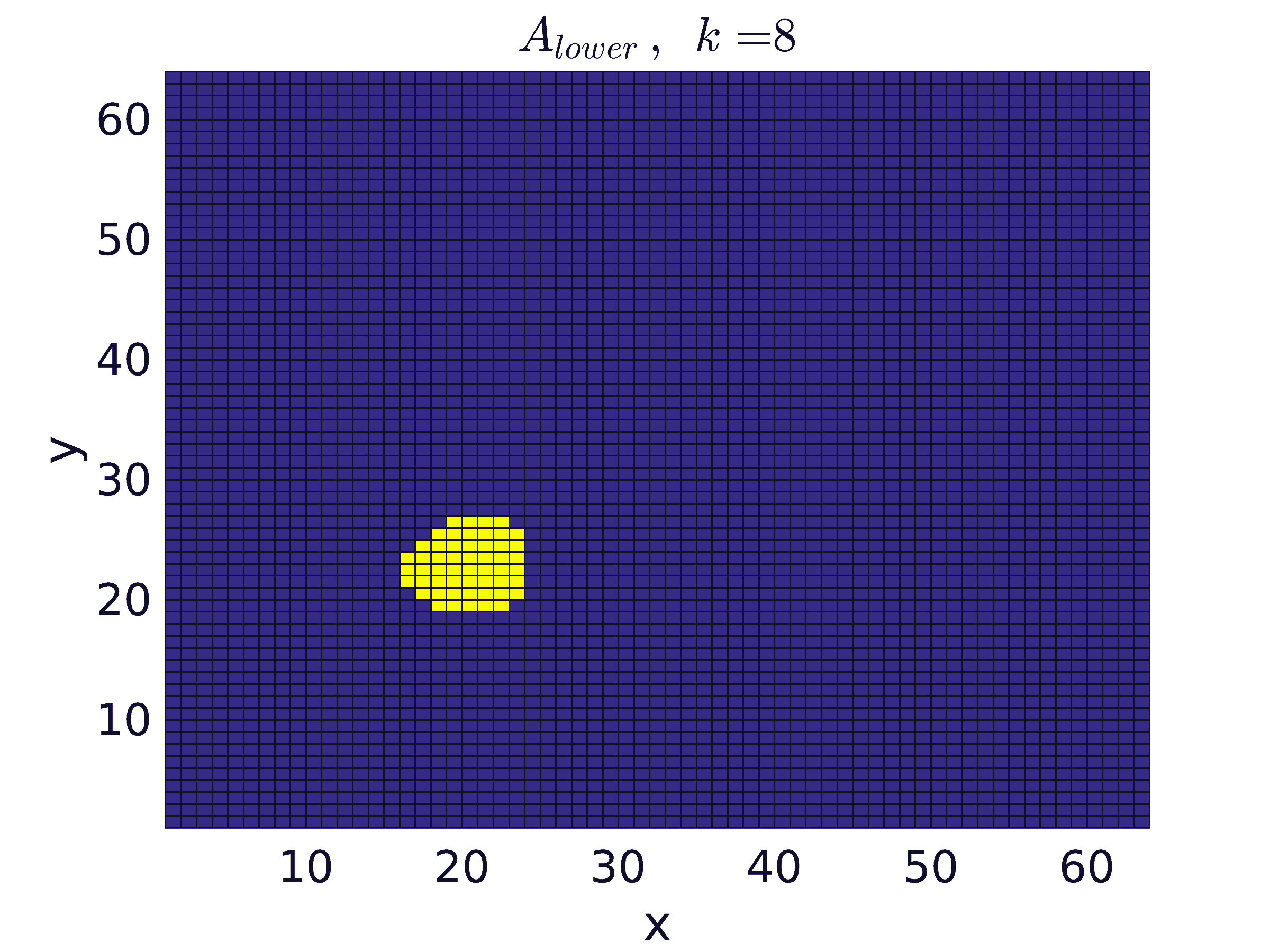}
\hspace*{0.01\textwidth}
\includegraphics[width=0.31\textwidth]{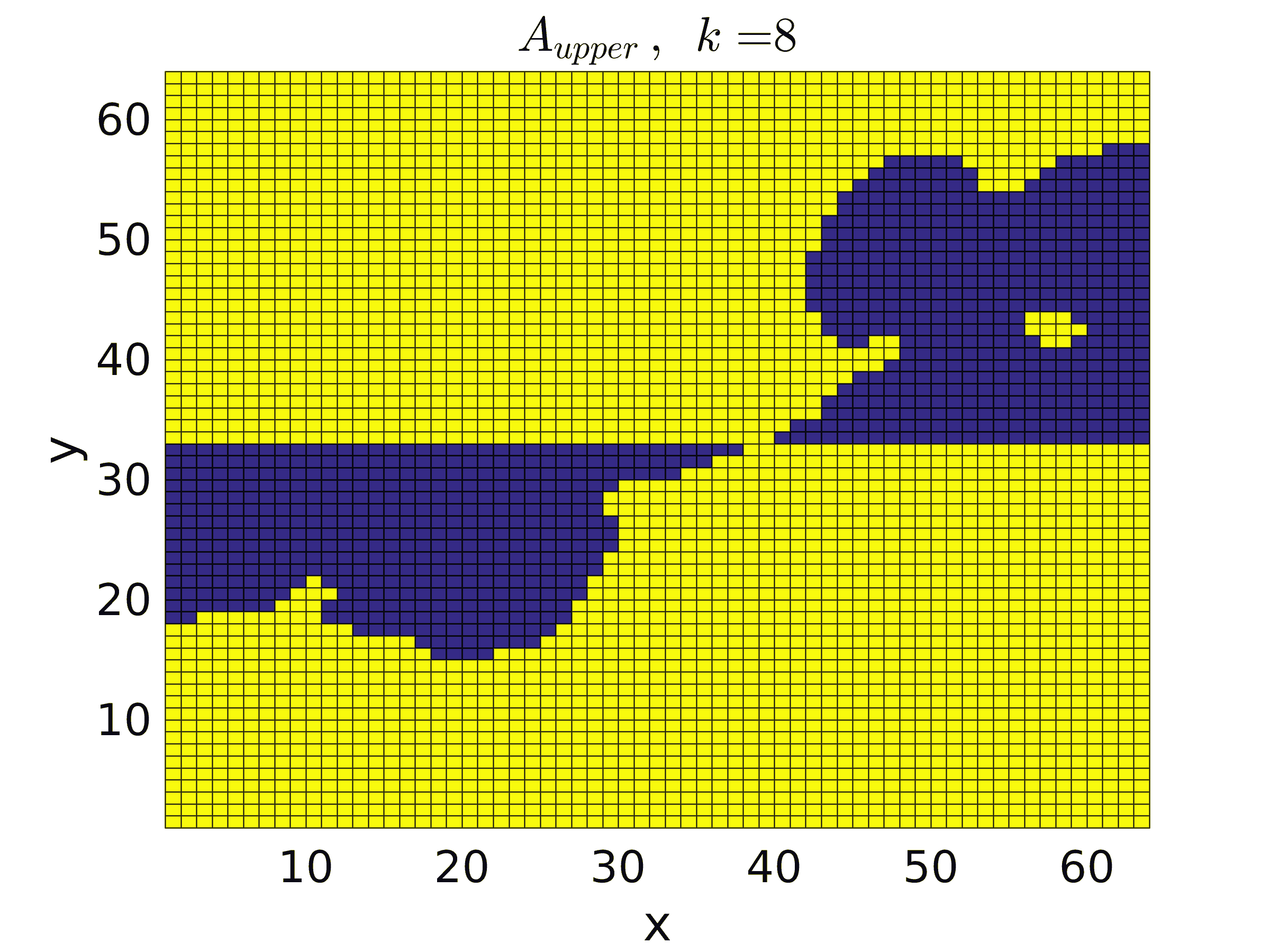}\\
\includegraphics[width=0.31\textwidth]{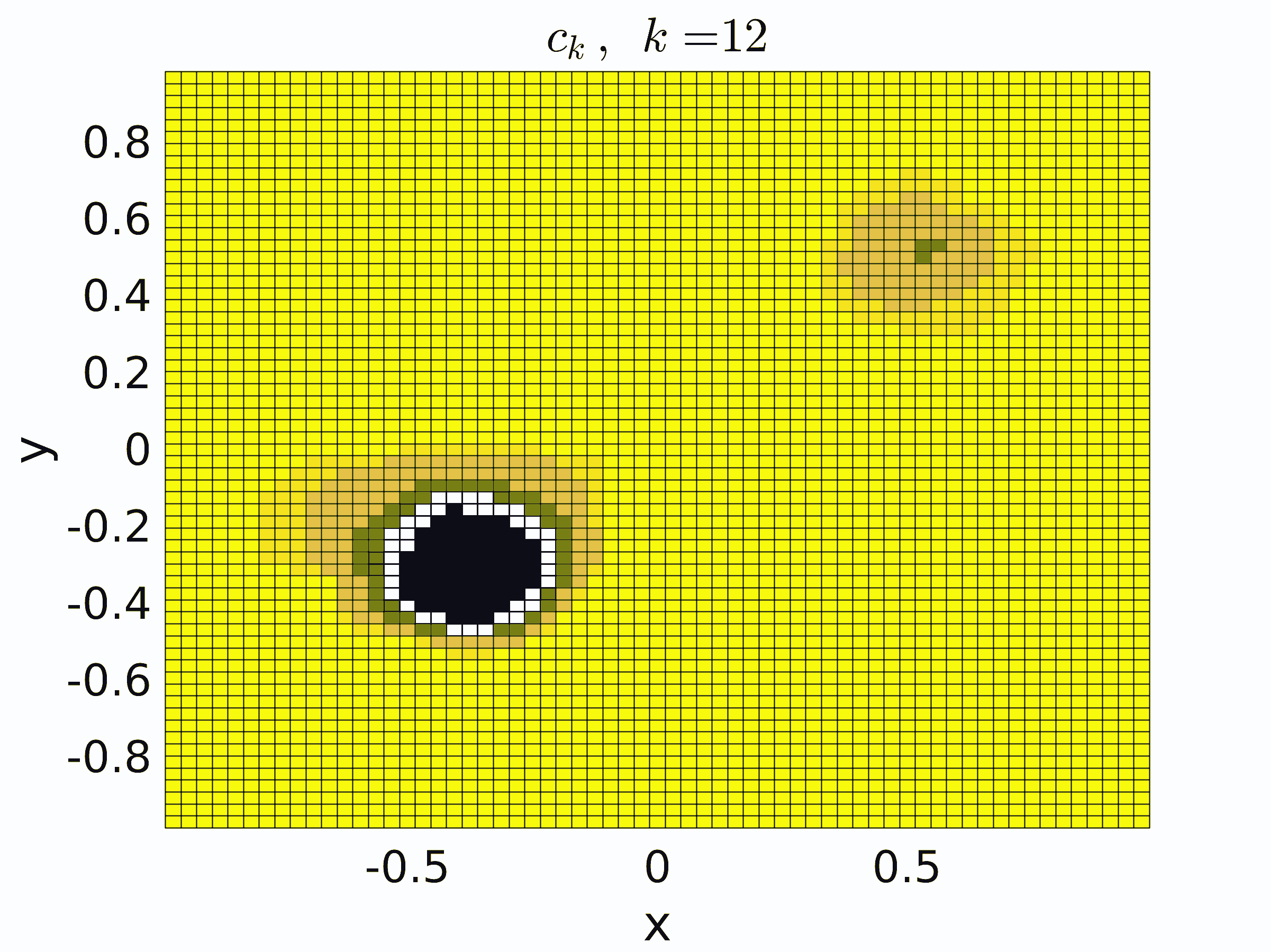}
\hspace*{0.01\textwidth}
\includegraphics[width=0.31\textwidth]{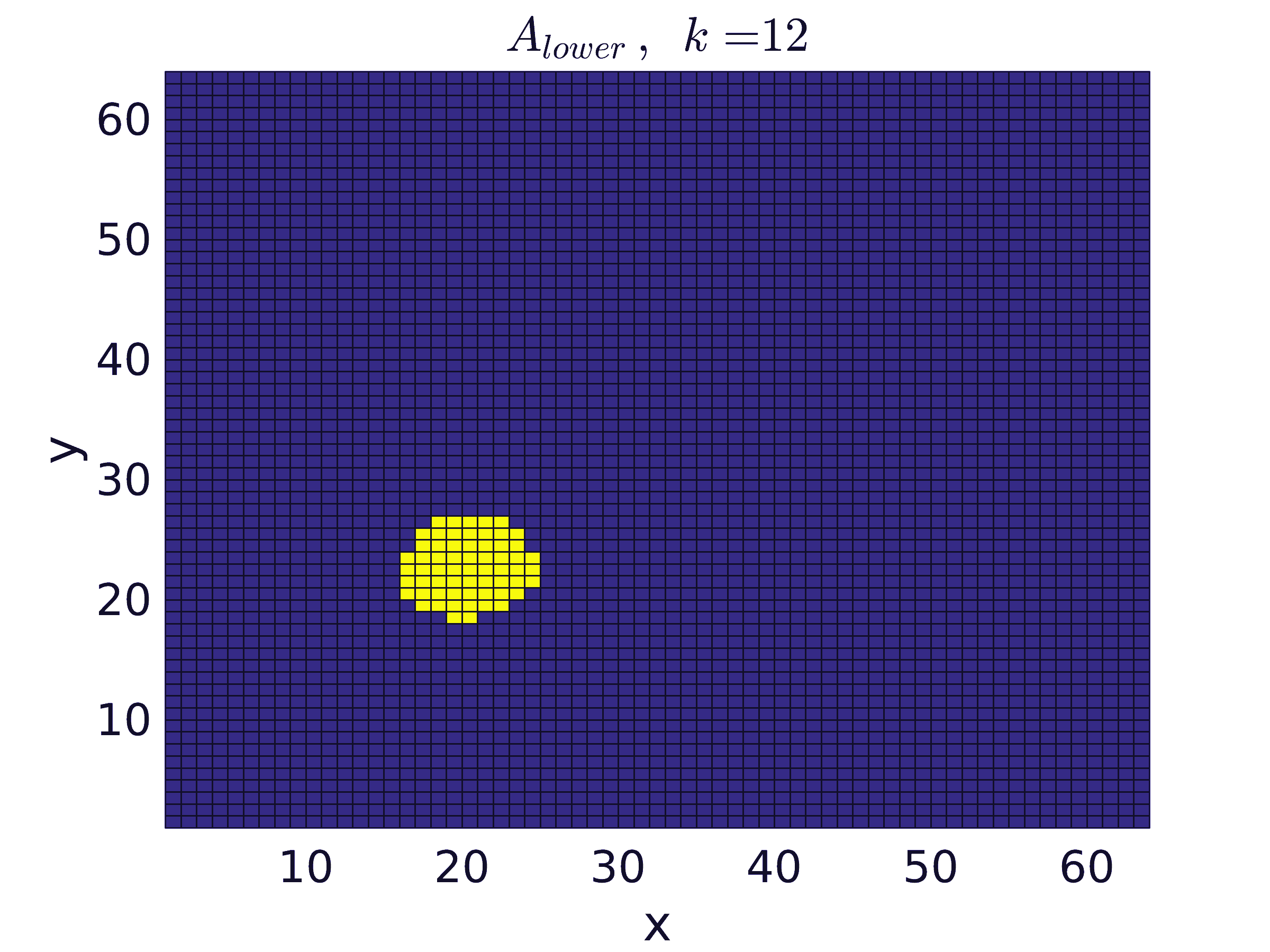}
\hspace*{0.01\textwidth}
\includegraphics[width=0.31\textwidth]{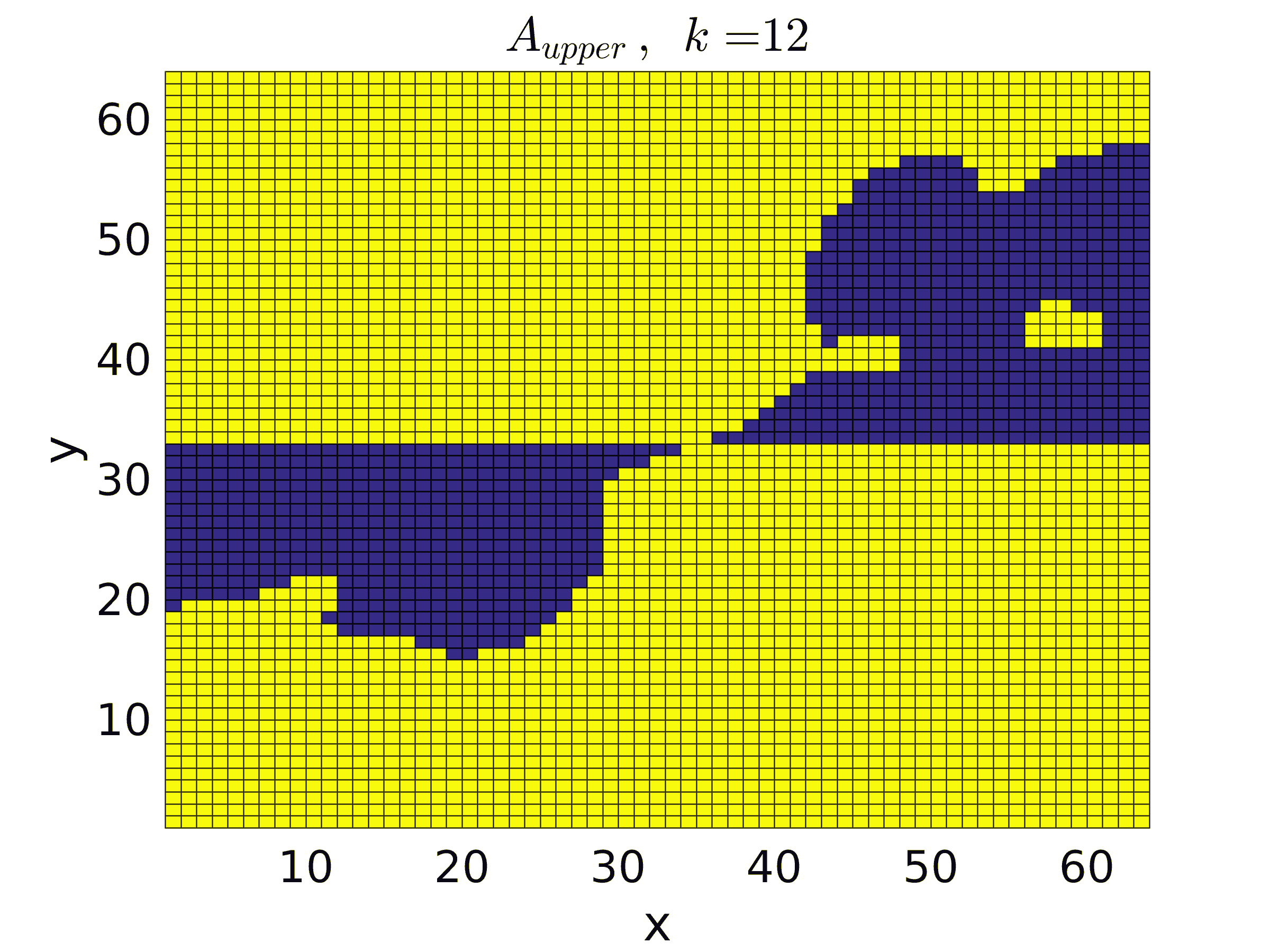}\\
\caption{
Test 3: Left: reconstructed coefficient $c_k$;
Middle: active set for lower bound;
Right: active set for upper bound;
For $k=1,4,8,12$ (top to bottom) and $\delta=0.01$.
\label{fig:convdel001_nneg}}
\end{figure}

\section{Conclusions and Remarks}\label{sec:con}
Imposing bounds both on the searched for parameter and on the data misfit is shown to provide an efficient tool for stabilizing inverse problems. The box constrained minimization problems resulting after discretization can be efficiently solved by a Gauss-Newton type SQP approach using recently developed active set methods for box constrained strictly convex quadratic programs. This is demonstrated by three examples of coefficient idenitification in elliptic PDEs.
\\
Future research in this direction might be concerned with strategies dealing with the potential nonconvexity arising due to stronger nonlinearity. (Note that the examples considered here so far can be shown to satisfy the so-called tangential cone condition and are therefore only mildly nonlinear.)
\\
Also investigations on convergence of iterative regularization methods
based on formulation \eqref{minJRaaoM} in an infinite dimensional
function space setting would be of interest. However, this requires to
work in nonreflexive spaces both in parameter and in data space, which
makes an analysis challenging.

\section*{Acknowledgment}
The authors gratefully acknowledge financial support by the Austrian
Science Fund FWF under the grants I2271 ``Regularization and
Discretization of Inverse Problems for PDEs in Banach Spaces'' and
P30054 ``Solving Inverse Problems without Forward Operators'' as well
as partial support by the Karl Popper Kolleg
``Modeling-Simulation-Optimization'', funded by the
Alpen-Adria-Universit\" at Klagenfurt and by the Carin\-thian Economic
Promotion Fund (KWF).

\clearpage


\begin{thebibliography}{10}
\bibitem{Faucheretal}
{\sc E. Beretta, M. V. de Hoop, F. Faucher and O. Scherzer},
{\em Inverse boundary value problem for the Helmholtz equation: quantitative conditional Lipschitz stability estimates},
SIAM Journal on Mathematical Analysis 48 (2016),  pp.~3962--3983.
\bibitem{BHHK:00}
{\sc M. Bergounioux, M. Haddou, M. Hinterm{\"u}ller and K. Kunisch},
{\em A Comparison of Interior Point Methods and a Moreau-Yosida
  based Active Set Strategy for Constrained Optimal Control Problems},
SIAM Journal on Optimization 11 (2000),  pp.~495--521.
\bibitem{BIK:99}
{\sc M. Bergounioux and K. Ito and K. Kunisch},
{\em Primal-{D}ual {S}trategy for {C}onstrained {O}ptimal
                  {C}ontrol {P}roblems}
                SIAM Journal on Control and Optimization 37 (1999),  pp.~1176--1194.
\bibitem{BorceaEIT}
{\sc L. Borcea},
{\em Electrical impedance tomography},
Inverse Problems 18 (2002)  pp.~R99–R136.
\bibitem{ColemanLi}
{\sc T.F. Coleman, and Y. Li},
{\em A reflective Newton method for minimizing a quadratic function subject to bounds on some of the variables},
SIAM Journal on Optimization 6 (1996),  pp.~1040--1058.
\bibitem{HIK:03}
{\sc M. Hinterm{\"u}ller and K. Ito and K. Kunisch},
{\em The primal-dual active set strategy as a semi-smooth Newton method},
SIAM Journal on Optimization 13 (2003),  pp.~865--888.
\bibitem{HungerlaenderRendl15}
{\sc P.~Hungerl\"ander and F.~Rendl},
{\em A feasible active set method for strictly convex problems with simple bounds},
SIAM Journal on Optimization 25 (2015),  pp.~1633--1659.
\bibitem{HungerlaenderRendl17}
{\sc P.~Hungerl\"ander and F.~Rendl},
{\em An infeasible active set method with combinatorial line search for
 convex quadratic problems with bound constraints},
Journal of Global Optimization (2018), accepted.
\bibitem{JP:94}
{\sc J\'{u}dice, Joaquim J. and Pires, Fernanda M.},
{\em A Block Principal Pivoting Algorithm for Large-scale Strictly
  Monotone Linear Complementarity Problems},
Computers and  Operations  Research 21 (1994), pp.~587--596.
\bibitem{minIP}
{\sc B. Kaltenbacher},
{\em Minimization based formulations of inverse problems and their regularization},
SIAM Journal on Optimization 28 (2018),  pp.~620--645.
\bibitem{Kindermann}
{\sc S. Kindermann},
{\em Convergence of the gradient method for ill-posed problems},
Inverse Problems and Imaging (IPI) 4 (2017),  pp.~703--720.
\bibitem{Knowles1998}
{\sc I.~Knowles}, {\em A variational algorithm for electrical impedance
  tomography}, Inverse Problems 14 (1998), p.~1513.
\bibitem{kure03}
{\sc Kunisch, K. and Rendl, F.},
{\em An infeasible active set method for convex problems with simple bounds},
SIAM Journal on Optimization 14 (2003),  pp.~35--52.
\bibitem{KohnMcKenny90}
{\sc R.~V. Kohn and A.~McKenney}, {\em Numerical implementation of a
  variational method for electrical impedance tomography}, Inverse Problems, 6
  (1990), p.~389.
\bibitem{KohnVogelius87}
{\sc R.~V. Kohn and M.~Vogelius}, {\em Relaxation of a variational method for
  impedance computed tomography}, Communications on Pure and Applied
  Mathematics 40 (1987), pp.~745--777.

\end{thebibliography}
\end{document}